    \newcommand{\BC}{{\mathbb {C}}} 
     \newcommand{\BF}{{\mathbb {F}}}
     \newcommand{\sH}{{\mathscr {H}}}
    \newcommand{\CO}{{\mathcal {O}}} \newcommand{\CP}{{\mathcal {P}}}
    \newcommand{\CS}{{\mathcal {S}}} 
    \newcommand{\CW}{{\mathcal {W}}}
    \newcommand{\RG}{{\mathrm {G}}}
    \newcommand{\RU}{{\mathrm {U}}}
     \newcommand{\bx}{{\bf {x}}} \newcommand{\bm}{{\bf {m}}}\newcommand{\Mat}{{\mathrm {Mat}}}
      \newcommand{\bt}{{\bf {t}}}
     \newcommand{\bn}{{\bf {n}}} \newcommand{\bW}{{\bf {W}}}
    \newcommand{\lenth}{{\mathrm {\lenth}}}
    \newcommand{\Gal}{{\mathrm{Gal}}} \newcommand{\GL}{{\mathrm{GL}}}
    \newcommand{\Hom}{{\mathrm{Hom}}} 
    \newcommand{\Ind}{{\mathrm{Ind}}}
    \newcommand{\id}{{\mathrm{id}}}
    \newcommand{\cond}{\mathrm{cond}} 
    \renewcommand{\Re}{{\mathrm{Re}}}
    \newcommand{\Sp}{{\mathrm{Sp}}}
    \newcommand{\diag}{{\mathrm{diag}}}
\newcommand{\Supp}{\mathrm{Supp}}
 \newcommand{\SL}{{\mathrm{SL}}}
 \newcommand{\SO}{{\mathrm{SO}}}
\newcommand{\vol}{{\mathrm{vol}}}
\newcommand{\Mp}{{\mathrm{Mp}}}
\newcommand{\rht}{{\mathrm{ht}}}
    \newcommand{\pair}[1]{\langle {#1} \rangle}
    \newcommand{\wpair}[1]{\left\{{#1}\right\}}
    \newcommand{\incl}{\hookrightarrow}
     \newcommand{\ra}{\rightarrow}
    \theoremstyle{plain}
    \newtheorem{thm}{Theorem}[section] \newtheorem{cor}[thm]{Corollary}
    \newtheorem{lem}[thm]{Lemma}  \newtheorem{prop}[thm]{Proposition}
     \newtheorem{defn}[thm]{Definition}
    \numberwithin{equation}{section}
\title{Stability of Rankin-Selberg gamma factors for $\Sp(2n), \widetilde {\Sp}(2n)$ and $\RU(n,n)$}
\subjclass[2010]{11F70, 22E50}
\keywords{gamma factors, stability, Howe vectors, partial Bessel functions}
\author{Qing Zhang}
\address{Department of Mathematics, The Ohio State University,
100 Math Tower 231 West 18th Ave,
Columbus OH 43210}
\email{zhang.1649@osu.edu}
\begin{document}

\maketitle

\begin{abstract}
Let $F$ be a $p$-adic field and $E/F$ be a quadratic extension. In this paper, we prove the stability of Rankin-Selberg gamma factors for $\Sp_{2n}(F), \widetilde \Sp_{2n}(F)$ and $\RU_{E/F}(n,n)$ when the characteristic of the residue field of $F$ is not $2$. 
\end{abstract}

%\tableofcontents

\section*{Introduction}
Let $G_n$ be $\Sp_{2n}, \widetilde \Sp_{2n}$ and $\RU_{E/F}(n,n)$, where $E/F$ is a quadratic extension of local or global field. The global Rankin-Selberg zeta integrals for the generic irreducible cuspidal automorphic representations of $G_n$ twisted by generic irreducible cuspidal representations of $\GL_m$ has been developed by Gelbart, Piatetski-Shapiro, Ginzburg, Rallis and Soudry, \cite{GePS2, GiRS1, GiRS2}. Recently, the standard properties of such local $\gamma$-factors were established by Kaplan \cite{Ka}. As a complimentary result of their work, in this paper, we prove the stability of the local gamma factor for a generic representation of $G_n(F)$ when twisted by a sufficiently highly ramified character of $\GL_1$ for a $p$-adic field $F$, when the residue field of $F$ is not 2. More precisely, the main result of this paper is the following
\begin{thm}
Let $F$ be a $p$-adic field such that the characteristic of its residue field is odd, $E/F$ be a quadratic extension. Let $\psi_U$ be a generic character of a maximal unipotent subgroup of $G_n(F)$ defined by a given nontrivial additive character $\psi$ of $F$. Let $\pi_1,\pi_2$ be two $\psi_U$-generic irreducible smooth representations of $G_n(F)$ with the same central character. If $\eta$ is a highly ramified quasi-character of $F^\times$, then $$\gamma(s,\pi_1,\eta,\psi)=\gamma(s,\pi_2,\eta,\psi).$$
\end{thm}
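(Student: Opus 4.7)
The plan is to follow the strategy pioneered by Jacquet--Shalika and systematized by Cogdell--Piatetski-Shapiro--Shahidi (and refined by Cogdell--Shahidi--Tsai, Baruch, and others) for proving stability of $\gamma$-factors via Howe vectors and partial Bessel functions. The scheme is to evaluate both sides of the local functional equation defining $\gamma(s,\pi_i,\eta,\psi)$ on a carefully chosen test vector built from Howe vectors in the Whittaker model of $\pi_i$, and to show that when $\eta$ is sufficiently ramified the resulting integrals depend only on the central character of $\pi_i$.

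First, I would set up Howe vectors $W_{\pi_i,m}\in \mathcal{W}(\pi_i,\psi_U)$: these are Whittaker functions obtained by translating a fixed nonzero Whittaker functional against the characteristic function of the $m$th congruence subgroup twisted by a conjugate of $\psi_U$. The key property is that $W_{\pi_i,m}$ is supported, modulo the unipotent radical on the left, on a small neighbourhood of $1$, and its values there are pinned down by the central character (once $m$ is large enough that the congruence subgroup lies in the kernel of $\psi_U$ appropriately). Next I would plug $W_{\pi_i,m}$ into the Rankin-Selberg integral of Gelbart--Piatetski-Shapiro--Ginzburg--Rallis--Soudry (for $m=1$, i.e.\ twists by $\GL_1$-characters $\eta$), producing $Z(s,W_{\pi_i,m},\eta,\Phi)$ for an appropriate Schwartz--Bruhat $\Phi$.

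The core analytic step is to decompose the integral via the Bruhat decomposition with respect to the Siegel-type parabolic relevant to the $\Sp_{2n}$, $\widetilde\Sp_{2n}$, $\RU(n,n)$ integral, and to apply the partial Bessel function machinery to the dual side $Z(1-s,\wt W_{\pi_i,m},\eta^{-1},\hat\Phi)$. The partial Bessel function associated to a Weyl element $w$ is controlled by a root-theoretic condition (``compatibility with $w$'') inherited from the germ expansion around $1$; for cells other than the one supporting the longest relevant Weyl element $w_{\ell}$, the partial Bessel function vanishes once $m$ is large, while on the big cell its value is again forced by the central character and the normalization of the Whittaker functional. Choosing $m$ in coordination with the conductor of $\eta$, the oscillation of $\eta$ kills the contributions from all non-identity Bruhat cells on the $Z$-side as well, leaving a single ``universal'' integral $I(s,\eta,\omega_{\pi_i},\psi,\Phi)$ that depends only on $s$, $\eta$, $\psi$, $\Phi$, and the central character $\omega_{\pi_i}$. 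Since $\omega_{\pi_1}=\omega_{\pi_2}$, this universal expression is the same for both representations, and the functional equation
\[
Z(1-s,\wt W_{\pi_i,m},\eta^{-1},\hat\Phi)=\gamma(s,\pi_i,\eta,\psi)\,Z(s,W_{\pi_i,m},\eta,\Phi)
\]
yields $\gamma(s,\pi_1,\eta,\psi)=\gamma(s,\pi_2,\eta,\psi)$ after one verifies that the denominator integral $Z(s,W_{\pi_i,m},\eta,\Phi)$ is nonzero for an appropriate choice of $\Phi$.

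The main obstacle, and where genuinely new work is required beyond a direct transcription of the $\GL_n$ or $\SO$ arguments, is the partial-Bessel-function analysis on the three groups $\Sp_{2n}$, $\widetilde\Sp_{2n}$, $\RU(n,n)$. One must (i) identify the correct Bruhat cells that can contribute relative to the embedding of the maximal unipotent and the Siegel radical used in the Rankin--Selberg integral, (ii) compute the Bruhat cell decompositions and commutation relations of root subgroups finely enough to show that, after integrating against $\psi_U$ and conjugating by the Howe-vector shifting element, all but the $w_{\ell}$-cell die; and (iii) handle the metaplectic cover $\widetilde\Sp_{2n}$, where cocycle computations intervene but are controlled (this is where the hypothesis on the residue characteristic being odd enters, since the cocycle and the splitting over small congruence subgroups simplify). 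The $\RU(n,n)$ case requires tracking the Galois action on root data but is structurally parallel. Once these computations are carried out cell-by-cell, the universal integral emerges and the theorem follows.
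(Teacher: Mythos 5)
Your overall architecture --- Howe vectors, partial Bessel functions, a Bruhat-cell analysis on the dual side of the functional equation, and a highly ramified twist --- is exactly the route the paper takes, and your list of obstacles (cell identification, commutator computations, the metaplectic cocycle, the Galois action for $\RU(n,n)$) is accurate. But one step as you state it is wrong, and it is precisely the step where the hypothesis on $\eta$ must enter. You claim that on the big cell the partial Bessel function's ``value is again forced by the central character and the normalization of the Whittaker functional.'' It is not: for $t$ in the torus, $W_{v_m}(tw_0)$ is genuinely representation-dependent, and if it were pinned down by $\omega_\pi$ you would conclude $\gamma(s,\pi_1,\eta,\psi)=\gamma(s,\pi_2,\eta,\psi)$ for \emph{every} $\eta$, which is false. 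What the paper proves instead (Theorem \ref{thm311}, Corollary \ref{cor313}) is a relative statement: the partial Bessel functions of the two representations \emph{agree} at $twu_w^-$ for $u_w^-$ outside the shrinking set $U^-_{w,m}$, by induction on the Bruhat order via Chevalley commutator relations; the new root-theoretic input in type $C_n$ is the treatment of ``bad pairs'' $(\gamma_1,\gamma_2)$ with $\pair{\gamma_2,\gamma_1^\vee}=2$ and $\tfrac12\rht(\gamma_2)<\rht(\gamma_1)\le\rht(\gamma_2)$ (Lemma \ref{lemma31}, Proposition \ref{prop33}), which forces a modified ordering of $\Sigma_w^-$.

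Consequently the roles you assign to the two mechanisms should be swapped. The $Z$-side is computed exactly --- with the section $f_s^i$ supported on $B^1\bar U^1_i$ and the Schwartz function $\phi_m$, it collapses to a torus integral whose value $q_F^{-3i-n^2m}$ is determined by the central character alone; no oscillation of $\eta$ is used there beyond $m\ge\cond(\eta)$ making $\eta$ trivial on the support $1+\CP^m$. On the dual side, after Corollary \ref{cor313} removes everything outside $U^-_{w_0,m}$, the difference of the two sides reduces to a Mellin transform
$$\int_{F^\times}\bigl(W_{v_k}(\bt(a)w_0)-W_{v_k'}(\bt(a)w_0)\bigr)\,\eta^{-1}_{-s-n+1}(a)\,da$$
of a fixed locally constant function of $a$ at a level $k$ that is \emph{independent of} $\eta$; it is the high ramification of $\eta$ --- not the central character --- that annihilates this term. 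Without this correction your plan either proves too much or leaves the uncontrolled big-cell residue unaccounted for.
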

Here the $\gamma$-factors are the Rankin-Selberg gamma factors, see $\S$1 for more details. We also notice that the main theorem also holds for $\RU_{E/F}$ if the residue characteristic of $F$ is 2 and $E/F$ is unramified.

Here we remark that in the $\Sp_{2n}$ case, this result can be deduced from previous work. Cogdell, Kim, Piatetski-Shapiro and Shahidi proved the stability of gamma factors for classical groups (which at least includes $\Sp_{2n}$ and $\SO_n$) in \cite{CKPSS},  where the gamma factors are defined using Langlands-Shahidi method. In \cite{Ka}, Kaplan proved that Rankin-Selberg gamma factors agree with the Langlands-Shahidi gamma factor. Thus our result in the $\Sp_{2n}$ case follows from the stability result in \cite{CKPSS} and Kaplan's result on the agreement of the two type gamma factors. In the $\RU_{E/F}(n,n)$-case, the stability of the Langlands-Shahidi gamma factors is proved in \cite{KK}. Thus in principle, our result in the $\RU_{E/F}(n,n)$ case should follow from an agreement result of the two type $\gamma$-factors, which is unfortunately not included in \cite{Ka}. 

In this paper, we prove the stability of gamma factors for $G_n$ in the Rankin-Selberg context. Although one can deduce this by pulling back the Langlands-Shahidi gamma factors via \cite{Ka}, it is still important to have a proof of stability that remains within the context of integral representations, since there are L-functions that we have integral representations for that are not covered by the Langlands-Shahidi method. So developing methods that work in the integral representation context have an intrinsic value.

Our proof of the stability of gamma factors follows the ideas of Baruch, \cite{Ba1, Ba2} and is based on analysis of partial Bessel functions associated with Howe vectors, which can be viewed as a continuation of the work \cite{Zh1, Zh2}. One main ingredient of the proof is a result of stability properties of partial Bessel functions associated with Howe vectors, see Theorem \ref{thm311}, which might have some independent interest. For example, if a more general form of Theorem \ref{thm311} is true, see the Remark after \ref{thm311}, it is possible to get a local converse theorem for $\Sp_{2n}$ and $\RU(n,n)$, see \cite{Zh1, Zh2} for the local converse theorem for the small rank case. To the author's knowledge, there is no local converse theorem obtained from the Langlands-Shahidi's gamma factors directly. We also expect that the method used here can be used to prove stability results for more groups and gamma factors.

Various results on stability of gamma factors were obtained in different settings, for example, \cite{JS, Ba1, Ba2, CPS, CKPSS, CPSS, CST1} to list a few of them. Usually, the stability of gamma factors is used in conjunction of local global arguments. For example, in the proof of functoriality for classical groups \cite{CKPSS}, the stability of  gamma factors is used to to resolve lack of local Langlands conjecture. In \cite{CST1}, the results on stability of gamma factors for exterior square for $\GL_n$ were used to show that the local Langlands correspondence for $\GL_n$ preserves $\varepsilon$-factors for exterior square and symmetric square.

The paper is organized as follows. In $\S$1, we briefly review the definitions of local zeta integrals and $\gamma$-factors for generic representations of $\Sp_{2n}\times \GL_1$. In $\S$2, we review the concept of Howe vectors following \cite{Ba1} and prove several lemmas which will be used in the later calculations. In $\S$3, we prove a stability result of partial Bessel functions associated with Howe vectors, which is the technical core of the proof of our main theorem. We prove our main theorem in the $\Sp_{2n}$ case in $\S4$, and give a brief account in the $\widetilde \Sp_{2n}$ and $\RU_{E/F}(n,n)$ case in $\S$5 and $\S$6.

\section*{Acknowledgement}
I am very grateful to my advisor Professor Jim Cogdell for many useful suggestions, consistent encouragement and support. I thank Dani Szpruch for his help on metaplectic groups.

\section*{Notations}
Let $F$ be a $p$-adic field, $\CO$ be the ring of integers, $\CP$ be the maximal ideal of $\CO$ and $\varpi$ be a uniformizer of $F$, i.e., a generator of $\CP$. Let $q_F=|\CO/\CP|$, and $|~|_F$ be the standard valuation of $F$ with $|\varpi|_F=q_F^{-1}$.

\subsection*{The symplectic group $\Sp_{2n}$ and its subgroups} Let $n>1$ be an integer and $\Sp_{2n}$ be the rank $n$ symplectic group defined by the matrix $$\begin{pmatrix}& J_n\\ -J_n & \end{pmatrix}, \textrm{ where } J_n=\begin{pmatrix}&&1\\ &\adots & \\ 1&& \end{pmatrix}.$$
Explicitly, $$\Sp_{2n}(F)=\wpair{g\in \GL_{2n}(F): {}^t\! g \begin{pmatrix}& J_n\\ -J_n & \end{pmatrix} g=\begin{pmatrix}& J_n\\ -J_n & \end{pmatrix} }.$$
Let $P=MN$ be the Siegel Levi subgroup of $\Sp_{2n}$, where
$$M=\wpair{\bm_n(g):=\begin{pmatrix}g& \\ & g^* \end{pmatrix},g\in \GL_n(F),g^*=J_n{}^t\! g^{-1}J_n },$$
and $$N=\wpair{\bn_n(X):= \begin{pmatrix}I_n & X\\ & I_n \end{pmatrix}, X\in \textrm{Mat}_{n\times n}(F), {}^t\!X=J_n X J_n}.$$
Let $U_M$ be the upper triangular unipotent subgroup of $M$, and $U=U_MN$, which is the maximal unipotent subgroup of the upper triangular Borel subgroup.

 Let $R$ be the subgroup of the Levi of $P$ which consists elements of the form
$$r(y,x)=\bm_n\begin{pmatrix} I_{n-2} && y\\ &1& x\\ &&1 \end{pmatrix}, y\in \textrm{Mat}_{(n-2)\times 1}(F)\cong F^{n-2}, x\in F.$$

\subsection*{Roots and Weyl group}Denote $$w_1=\begin{pmatrix}& 1 &&\\ I_{n-1} &&&\\ &&&I_{n-1}\\ &&1& \end{pmatrix},$$
and set $j(g)=w_1gw_1^{-1}$, for $g\in \Sp_{2n}$.

Let $T$ be the maximal torus which consists elements of the form $t=\diag(a_1,\dots,a_n, a_n^{-1},\dots, a_1^{-1})$. The simple roots of $\Sp_{2n}$ are $\alpha_i, 1\le i\le n-1, \beta$, where
$$\alpha_i(t)=\frac{a_i}{a_{i+1}}, 1\le i\le n-1, \beta(t)=a_n^2 .$$
Let $\Sigma^+$ be the set of positive roots of $\Sp_{2n}$ and $\Sigma$ be the set of roots of $\Sp_{2n}$. For $\gamma\in \Sigma$, let $U_\gamma$ be the root space of $\gamma$ and let $\bx_{\gamma}:F\ra U_\gamma$ be the corresponding 1-parameter isomorphism. 

Let $\bW$ be the Weyl group of $\Sp_{2n}$. For $\gamma\in \Sigma^+$, let $s_\gamma\in \bW$ be the simple reflection defined by $\gamma$. Then $s_\gamma$ acts on the set $\Sigma$ by $s_\gamma(\gamma')=\gamma'-\pair{\gamma',\gamma^\vee}\gamma$, where $\gamma^\vee$ is the coroot of $\gamma$, and $\pair{\gamma',\gamma^\vee}$ is the natural paring between roots and coroots.

The Weyl group $\bW$ is generated by $s_{\alpha_i}$ and $s_\beta$. We can take representative of $s_{\alpha_i}$, $s_\beta$, by
$$s_{\alpha_i}=\bm_n\begin{pmatrix} 1&&&\\ & \ddots&& \\ &&&1&\\ &&1&& \\ &&&&\ddots & \\ &&&&&1\end{pmatrix}, 1\le i\le n-1,$$
where the block $\begin{pmatrix} &1\\ 1& \end{pmatrix}$ is in the $(i,i+1)\times (i,i+1)$ position,
and $$s_{\beta}=\begin{pmatrix}I_{n-1}&&&\\ &&1&\\ &-1&&\\ &&&I_{n-1} \end{pmatrix}.$$

It is easy to check that $w_1=s_{\alpha_1}s_{\alpha_2}\dots s_{\alpha_{n-1}}$. Let $w_0=w_1s_\beta w_1^{-1}=j(s_\beta)$. In matrix form, we have
$$w_0=\begin{pmatrix} &&1\\ &I_{2n-2} &\\ -1&& \end{pmatrix}.$$

\subsection*{The group $\SL_2$} We will use the following notations for elements of $\SL_2(F)$:
$$\bm_1(a)=\begin{pmatrix}a& \\ & a^{-1} \end{pmatrix},a\in F^\times, \bn_1(b)=\begin{pmatrix}1& b\\ &1 \end{pmatrix}, b\in F, $$
$$ \bar \bn_1(b) =\begin{pmatrix}1& \\ b&1 \end{pmatrix}, b\in F, w^1=\begin{pmatrix} &1\\ -1& \end{pmatrix}.$$
Denote $U^1=\wpair{\bn_1(b),b\in F}$ be the upper triangular unipotent subgroups and $\bar U^1=\wpair{\bar \bn_1(b): b\in F}$ be the lower triangular unipotent subgroups. Let $A=\wpair{\bm_1(a),a\in F^\times}$ be the torus of $\SL_2(F)$.

\subsection*{The metaplectic group $\widetilde \Sp_{2n}$} Let $\widetilde \Sp_{2n}$ be the metaplectic double cover of $\Sp_{2n}$. As a set, we have $\widetilde \Sp_{2n}=\Sp_{2n}\times \mu_2$, where $\mu_2$ is the group $\wpair{\pm1}$. The group multiplication is given by 
$$(g_1,\epsilon_1)\cdot (g_2,\epsilon_2)=(g_1g_2,\epsilon_1\epsilon_2 c(g_1,g_2)), (g_i,\epsilon_i)\in \widetilde \Sp_{2n},$$
where $c(g_1,g_2)$ is the Rao cocycle defined in \cite{Rao}, cf \cite{Sz1} for a brief review of the cocycle formulas.

\section{Local zeta integrals and gamma factors for $\Sp_{2n}$ and $\Mp_{2n}$}
In this section, we review the local zeta integrals for generic representations of $\Sp_{2n}\times \GL_1$ and $\Mp_{2n}\times \GL_1$ defined in \cite{GiRS1}, and the definition $\gamma$-factors. The paper \cite{Ka} contains a nice review of these constructions.

\subsection{Weil representations of $\widetilde \Sp_2\ltimes \sH$} Let $\sH$ be the Heisenberg group of 3 variables, i.e., $\sH=W\oplus F$, where $W$ is the symplectic space of dimension 2, with symplectic structure defined by $\pair{w_1,w_2}=2w_1\begin{pmatrix}&1\\ -1& \end{pmatrix}{}^t\! w_2$\footnote{The factor 2 is added to simplify some formulas.}. Here we view elements of $W$ as row vectors. A typical element of $\sH$ is written as $[w,z]$, for $w\in W,z\in F$. The product in $\sH$ is given by 
$$[w,z]+[w',z']=[w+w',z+z'+\frac{1}{2}\pair{w,w'}].$$
We identify $\Sp(W)$ with $\SL_2(F)$. Recall that $\widetilde \SL_2$ denote the metaplectic double cover of $\SL_2$. For later use, we recall the Rao cocycle and the product in $\widetilde \Sp_2$. For $(g_i, \zeta_i)\in \widetilde \SL_2, i=1,2,$, we have
$$(g_1,\zeta_1)(g_2,\zeta_2)=(g_1 g_2, \zeta_1 \zeta_2 c(g_1,g_2)),$$
where $c:\SL_2(F)\times \SL_2(F)\ra \wpair{\pm1}$ is defined by
$$c(g_1,g_2)=(\bx(g_1),\bx(g_2))_F (-\bx(g_1)\bx(g_2), \bx(g_1g_2))_F,$$
where $$\bx\begin{pmatrix} a& b\\ c& d \end{pmatrix}=\left\{\begin{array}{lll} c, & c\ne 0, \\ d, & c=0. \end{array}
\right.$$
For these formulas, see \cite{Sz2} for example.

For an element $g\in \SL_2(F)$, we sometimes write $(g,1)\in \widetilde \SL_2(F)$ as $g$ by abuse notation.

A representation $\pi$ of $\widetilde \SL_2(F)$ is called genuine if $\pi(\zeta g)=\zeta \pi(g)$ for all $g\in \widetilde \SL_2(F)$ and $\zeta \in \mu_2$. Let $\psi$ be a nontrivial additive character of $F$, there is a Weil representation $\omega_{\psi}$ of $\widetilde \SL_2(F)\ltimes \sH$ on $\CS(F)$, the Bruhat-Schwartz functions on $F$. 
 For $\phi\in \CS(F)$, $\xi\in F$, we have the familiar formulas:
\begin{align*}
\omega_{\psi}([x,x',z])\phi(\xi)&=\psi(z+2\xi x'+x_0x')\phi(\xi+x), [x,x',z]\in \sH,\\
\omega_{\psi}(w^1)\phi(\xi)&=\gamma(\psi)\hat \phi(\xi),\\
\omega_{\psi}(\bn_1(b))\phi(\xi)&=\psi^{-1}(bx^2)\phi(\xi), b\in F\\
\omega_{\psi}(\bm_1(a)) \phi(\xi)&=|a|^{1/2}\frac{\gamma(\psi)}{\gamma(\psi_a)}\phi(a\xi), a\in F^\times,
\end{align*}
and
$$ \omega_{\psi}(\zeta)f(\xi)=\zeta f(\xi), \zeta \in \mu_2. $$
Here $\hat f(x)=\int_F f(y)\psi(2xy)dy$, where $dy$ is normalized so that $(\hat f)^{\hat~}(x)=f(-x)$, $\gamma(\psi)$ is the Weil index and $\psi_a(x)=\psi(ax)$. For these formulas, see \cite{GePS, GiRS2} for example. Note that the formula is affected by the factor 2 in the formula of $\pair{w_1,w_2}$.

Let $\widetilde A$ be the inverse image of the torus
$A\subset \SL_2(F)$ in $\widetilde \SL_2(F)$. The product in
$\widetilde A$ is given by the Hilbert symbol, i.e.,
$$(\bm_1(a),\zeta_1)(\bm_1(b),\zeta_2)=(\bm_1(ab), \zeta_1\zeta_2(a,b)_F),$$
where $(a,b)_F$ is the Hilbert symbol.
 The function
$$\mu_\psi(a)=\frac{\gamma(\psi)}{\gamma(\psi_a)}$$
satisfies
$$\mu_{\psi}(a)\mu_\psi(b)=\mu_\psi(ab)(a,b),$$
and thus defines a genuine character of $\widetilde A$.

\subsection{Genuine induced representation of $\widetilde \SL_2$}
Recall that we denote $U^1$ the upper triangular unipotent of $\SL_2(F)$. Then the Borel subgroup of $\widetilde \SL_2$ is $\tilde A U^1$. Let $\eta$ be a quasi-character of $F$, $s\in \BC$, we consider the genuine induced representation $\tilde I(s,\eta,\psi)=\Ind_{\tilde A U^1}^{\widetilde \SL_2}((\mu_\psi)^{-1} \eta |~|^{s-1/2})$ of $\widetilde \SL_2$. Since $\delta_{\tilde A U^1}((\bm_1(a),\zeta))=|a|^2$, an element $f_s\in \tilde I(s,\eta,\psi)$ satisfies the condition
$$f_s(\bn_1(b)(\bm_1(a), \zeta)g)=\zeta \mu_\psi(a)^{-1}\eta(a)|a|^{s+1/2}f_s(g), b\in F, a\in F^\times, \zeta \in \mu_2, g\in \widetilde \SL_2(F).$$

\subsection{The local zeta integral}
Consider the embedding $\iota:\SL_2\ra \Sp_{2n}$
$$g\mapsto \iota(g)= \begin{pmatrix}I_{n-1}&&\\ &g&\\ &&I_{n-1} \end{pmatrix}.$$
Notice that $\iota\begin{pmatrix} &1\\ -1& \end{pmatrix}=s_\beta$.

Recall that $U$ denote the standard maximal unipotent subgroup of $\Sp_{2n}$. Given a nontrivial additive character $\psi$ of $F$, let $\psi_U$ be the generic character of $U$ defined by 
$$\psi_U((u_{ij}))=\psi_U(\sum_{i=1}^{n}u_{i,i+1}), (u_{ij})\in U.$$

Let $\pi$ be an irreducible $\psi_U$-generic representation of $\Sp_{2n}(F)$, and let $\CW(\pi,\psi_U)$ be the space of $\psi_U$-Whittaker models of $\pi$. Let $\eta$ be a quasi-character of $F^\times$. For $W\in \CW(\pi,\psi_U)$, $\phi\in \CS(F)$ and $f_s\in I(s,\eta,\psi^{-1})$, we consider the local zeta integral
\begin{align*}\Psi(W,\phi,f_s)=\int_{U^1\setminus \SL_2} \int_{F^{n-2}} \int_F W(j(r(y,x) g))(\omega_{\psi^{-1}}(g)\phi)(x)f_s(g)dydxdg.
\end{align*}
\textbf{Remark:} (1) Recall that $j(g)=w_1gw_1^{-1}$ for $g\in \Sp_{2n}$. In the above integral, we do not distinguish $g$ with $\iota(g)$ for $g\in \SL_2(F)$ by abuse of notation. \\
(2) The above local zeta integral was first considered by Ginzburg, Rallis and Soudry in \cite{GiRS1} when $\eta$ is trivial. In fact, Ginzburg, Rallis and Soudry defined a global zeta integral, proved it is Eulerian and did the unramified calculation in \cite{GiRS1}. Later, similar constructions were generalized to the situation $\Sp_{2n}\times \GL_k$ for any $k$ in \cite{GiRS2}.
\begin{prop}
\begin{enumerate}
\item The integral $\Psi(W,\phi,f_s)$ is absolutely convergent when $\Re(s)>>0$, defines a rational function of $q_F^{-s}$.
\item There exists a choice of datum such that $ \Psi(W,\phi,f_s)=1$.
\end{enumerate}
\end{prop}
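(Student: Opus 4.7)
My plan is to follow the standard Rankin--Selberg template developed by Jacquet--Piatetski-Shapiro--Shalika and adapted to this setting by Ginzburg--Rallis--Soudry. The first step is to apply the Iwasawa decomposition $\SL_2(F) = U^1 A K$ with $K = \SL_2(\CO)$, which converts the outer integral over $U^1 \backslash \SL_2(F)$ into an integral over $A \times K$. Combining the transformation property $f_s(\bn_1(b)(\bm_1(a),\zeta)k) = \zeta\mu_\psi(a)^{-1}\eta(a)|a|^{s+1/2} f_s(k)$ with the Weil representation formula $\omega_{\psi^{-1}}(ak)\phi(x) = |a|^{1/2}\mu_{\psi^{-1}}(a)^{-1}(\omega_{\psi^{-1}}(k)\phi)(ax)$, and changing variables $x \mapsto a^{-1} x$ in the inner $F$-integral, the zeta integral reduces (up to the $\mu_\psi/\mu_{\psi^{-1}}$ factors that cancel against $\eta$) to a $K$-average of one-dimensional Tate-type integrals of the form
\begin{equation*}
\int_{F^\times} \Phi_k(a)\,\eta(a)\,|a|^{s-1/2}\, d^\times a,
\end{equation*}
where $\Phi_k(a)$ is assembled from the inner integrations over $F^{n-2} \times F$ applied to $W(j(r(y,x)\iota(\bm_1(a))\iota(k)))$ weighted by $\omega_{\psi^{-1}}(k)\phi$ and $f_s(k)$.

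For part (1), absolute convergence for $\Re(s) \gg 0$ will then follow from the standard gauge estimate for $\psi_U$-Whittaker functions of $\Sp_{2n}(F)$: such a function decays rapidly as $|a| \to 0$ and is majorized by a fixed power of $|a|$ as $|a| \to \infty$ on the torus, while the Schwartz condition on $\phi$ and the compact support of integration over $K$ control the other variables. Once convergence is established, rationality in $q_F^{-s}$ follows either by invoking Bernstein's continuation principle for families of integrals depending analytically on $s$, or equivalently by observing that the asymptotic expansion of the Whittaker function writes each $\Phi_k$ as a finite combination of quasi-characters times polynomials in $\val(a)$ near $0$ and $\infty$, so each torus integral is visibly rational in $q_F^{-s}$.

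For part (2), the strategy is to choose data concentrated in a small neighborhood of the identity so that the integrand vanishes outside a small compact set on which it is essentially constant. I would fix a small open compact subgroup $V_0 \subset \Sp_{2n}(F)$ on which $\psi_U|_{V_0 \cap U}$ is trivial, take $W \in \CW(\pi,\psi_U)$ with $W(e) = 1$ and $\supp(W) \subset U V_0$, take $\phi \in \CS(F)$ to be the characteristic function of a small neighborhood of $0$, and take $f_s$ supported in the preimage in $\widetilde \SL_2(F)$ of a small open compact subset of $\SL_2(F)$ modulo the Borel, normalized by $f_s(e) = 1$ for all $s$. By shrinking the three neighborhoods simultaneously, one forces $j(r(y,x)g) \in U V_0$ only on a small compact set on which $W$, $\phi$, $\omega_{\psi^{-1}}(g)$ and $f_s$ are all essentially equal to their values at the identity, so the integral evaluates to a nonzero positive constant independent of $s$, which can be rescaled to $1$.

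The main obstacle will be the convergence step: one needs a sharp enough gauge estimate for $W(j(r(y,x)\iota(\bm_1(a))\iota(k)))$, uniform in $(y,x,k)$, to guarantee that the inner integrations produce a function $\Phi_k$ of $a$ with the right decay to make the outer Tate integral converge for $\Re(s)$ large. This is ultimately the standard Whittaker asymptotic applied with care to the interaction of $\iota$, $j = \Ad(w_1)$, and the root subgroup $R$, together with the cocycle bookkeeping needed when one later tracks the genuine/metaplectic factors in the $\widetilde \Sp_{2n}$ analogue.
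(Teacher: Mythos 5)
Your part (1) is essentially the argument the paper has in mind: it simply asserts that convergence follows from a standard gauge estimate and omits the details. One point the paper does flag explicitly and you gloss over: the $y$-integral over $F^{n-2}$ is not controlled by $\phi$ or by $K$ at all; one must show separately that $y\mapsto W(j(r(y,x)g))$ has compact support (this is a Whittaker-function argument via conjugation by root subgroups, the same mechanism as in Lemma \ref{lemma26}). With that added, your Iwasawa/Tate-integral reduction and the rationality argument are fine and standard.

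Part (2) is where you diverge from the paper and where there is a genuine gap. The paper does not prove (2) here at all: it cites Proposition 1.3 of \cite{GiRS1} for $\eta=1$ and defers the general case to the Howe-vector computation in $\S$4, where $\Psi(W_{v_m},\phi_m,f_s^i)=q_F^{-3i-n^2m}$ is computed explicitly. Your plan rests on choosing $W\in\CW(\pi,\psi_U)$ with $W(e)=1$ and $\supp(W)\subset UV_0$ for a small open compact $V_0$. That is not something you can do: the support of a Whittaker function cannot be prescribed to lie in $UV_0$. Indeed the Howe vector $W_{v_m}$ --- which is the closest available substitute --- is \emph{not} supported in $UV_0$; only its restriction to the Borel is localized (Corollary \ref{cor25}), and its behaviour on the other Bruhat cells $BwB$ is nontrivial (that is precisely what all of $\S$3 is devoted to controlling). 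What actually makes the paper's computation collapse to a single nonzero constant is not the support of $W$ but the support of the section $f_s^i$ (concentrated on $B^1\bar U^1_i$) together with the quasi-invariance $W_{v_m}(gh)=\psi_m(h)W_{v_m}(g)$ for $h\in H_m$ and the vanishing statements of Corollary \ref{cor25} and Lemma \ref{lemma26}, which kill the $a$- and $y$-integrals outside small neighborhoods. So the correct repair of your argument is to replace ``$W$ with small support'' by ``the Howe vector $v_m$ together with $\phi_m$ and $f_s^i$,'' and then run the explicit computation of $\S$4.
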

\begin{proof}
It is not hard to show that the integral on the $y$-part has compact support. Thus the inner integral is absolutely convergent. The assertion of (1) follows from a gauge estimate of $W$, which is standard. We omit the details here. Part (2) is proved in Proposition 1.3, \cite{GiRS1} when $\eta=1$. In our case (when $F$ is $p$-adic), we will give a proof using Howe vectors later.
\end{proof}

Consider the standard intertwining operator $M_s: \tilde I(s,\eta,\psi^{-1})\ra \tilde I (1-s,\eta^{-1},\psi^{-1})$ defined by 
$$M_s(f_s)(\tilde g)=\int_{F}f_s(((w^1)^{-1}\bn_1(b),1) \tilde g)db.$$
It is well-known that the above integral is absolutely convergent when $\Re(s)>>0$ and can be memorphically continued to all $\BC$. 
\begin{prop}\label{prop14}
Given a $\psi_U$-generic representation $\pi$ and a quasi-character $\eta$ on $F^\times$, there is a meromorphic function $\gamma(s,\pi,\eta,\psi)$ such that
$$\Psi(W,\phi, M_s(f_s))=\gamma(s,\pi,\eta)\Psi(W,\phi,f_s),$$
for all $W\in \CW(\pi,\psi_U), \phi\in \CS(F),$ and $f_s\in \tilde I(s,\eta,\psi^{-1})$.
\end{prop}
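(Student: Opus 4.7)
The plan is to realize both sides of the putative identity as elements of a one-dimensional space of bilinear forms, and then to promote the resulting scalar to a rational function of $q_F^{-s}$ by Bernstein's continuation principle. This is the standard Rankin-Selberg recipe for the existence of a $\gamma$-factor.

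First, I would recast the zeta integral abstractly. Using the explicit action formulas for the Weil representation $\omega_{\psi^{-1}}$ recalled in $\S 1$ and the normalizer behavior of $\iota(\SL_2)\ltimes R$ inside $\Sp_{2n}$, a direct change-of-variable computation shows that the map
\[
 f_s \;\longmapsto\; \Bigl((W,\phi) \mapsto \Psi(W,\phi,f_s)\Bigr)
\]
factors through an element of a Hom space of the form
\[
 \Hom_{\widetilde \SL_2(F)\ltimes \sH}\!\bigl(\tilde I(s,\eta,\psi^{-1})\otimes \omega_{\psi^{-1}},\; \FJ_\psi(\pi)\bigr),
\]
where $\FJ_\psi(\pi)$ denotes the Fourier-Jacobi-type model of $\pi$ obtained by partially integrating $\psi_U$-Whittaker functions of $\pi$ over the subgroup $j(R)$ against the relevant character. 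Since $M_s$ is an intertwiner $\tilde I(s,\eta,\psi^{-1})\to \tilde I(1-s,\eta^{-1},\psi^{-1})$, the composition $f_s\mapsto \Psi(W,\phi,M_s(f_s))$ yields a second element of the \emph{same} Hom space.

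Second, I would invoke a local multiplicity-one statement for this Fourier-Jacobi Hom space. For $\Sp_{2n}(F)\times \GL_1(F)$ the required uniqueness (at generic $s$) is established in the course of the Rankin-Selberg theory of Ginzburg-Rallis-Soudry \cite{GiRS1} and is given a systematic local treatment by Kaplan \cite{Ka}; it implies the Hom space above is at most one-dimensional. Combined with the first step, uniqueness forces the two trilinear forms to be proportional by a scalar $\gamma(s,\pi,\eta,\psi)$ that is independent of the choice of $(W,\phi,f_s)$ and, a priori, only a function of $s$ on the Zariski-open set where both sides are nonzero.

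Third, I would establish rationality (hence meromorphy) in $q_F^{-s}$ via Bernstein's continuation principle. By Part (2) of the preceding proposition, one can choose data $(W,\phi,f_s)$ so that $\Psi(W,\phi,f_s)\equiv 1$; moreover $s\mapsto M_s$ is a rational family of intertwining operators on $\tilde I(s,\eta,\psi^{-1})$. Hence with these choices the left-hand side $\Psi(W,\phi,M_s(f_s))$ is a rational function of $q_F^{-s}$ and, by Step 2, coincides with $\gamma(s,\pi,\eta,\psi)$; this yields the desired rationality.

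The main obstacle is the local multiplicity-one assertion; everything else is a formal uniqueness plus Bernstein argument. Since this uniqueness is carried out (in exactly the form needed here) as part of the general treatment in \cite{Ka}, the proof reduces to assembling the three steps above and citing that reference. The same blueprint will apply verbatim in the $\widetilde \Sp_{2n}$ and $\RU_{E/F}(n,n)$ settings discussed in $\S 5$-$\S 6$, with the appropriate Weil/genuine-induction replacements.
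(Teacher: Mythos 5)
Your proposal is correct and follows essentially the same route as the paper, which simply cites the uniqueness of Fourier--Jacobi models (Gan--Gross--Prasad, Sun) together with Kaplan's treatment for the details; your three-step outline (realize both sides in the Fourier--Jacobi Hom space, invoke multiplicity one, then get rationality via Bernstein and the nonvanishing of Part (2)) is exactly the argument being referenced.
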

\begin{proof}
This follows from the uniqueness of Fourier-Jacobi models, \cite{GaGP, Su}. For more details, see \cite{Ka} for example.
\end{proof}
\subsection{Local zeta integrals and gamma factors for generic representation of $\widetilde \Sp_{2n}$} The zeta integrals and gamma factors are defined similarly for $\widetilde \Sp_{2n}$ and we give a brief account of that. Let $\tilde U $ be the preimage of $U$ in $\widetilde \Sp_{2n}$. It is well-known that $\tilde U=U\times \mu_2$ as a group, see $\S$3A of \cite{Sz1} for example. The generic character $\psi_U$ of $U$ extends to a character $\psi_{\tilde U}$ of $\tilde U$ by $\psi_{\tilde U}((u,\epsilon))=\epsilon\psi_U(u)$ for $(u,\epsilon)\in \tilde U$. Let $\pi$ be a genuine irreducible admissible $\psi_{\tilde U}$-generic representation of $\widetilde \Sp_{2n}$. By the main result of \cite{Sz1}, the Whittaker functional of $\pi$ is unique. Let $\CW(\pi,\psi_{\tilde U})$ be the space of $\psi_{\tilde U}$-Whittaker functional of $\pi$. Let $\eta$ be a quasi-character of $F^\times$ and let $I(s,\eta)=\Ind_{AU^1}^{\SL_2(F)}(\eta|~|^{s-1/2} )$ be the induced representation of $\SL_2(F)$. For $W\in \CW(\pi,\psi_{\tilde U})$, $\phi\in \CS(F)$ and $f_s \in I(s,\eta)$, one can consider the local zeta integral
$$\Psi(W,\phi,f_s)=\int_{U^1\setminus \SL_2(F)} \int_{F^{n-2}} \int_F W(j(r(y,x) g))(\omega_{\psi^{-1}}(g)\phi)(x)f_s(g)dydxdg ,$$
where an element $g\in \Sp_{2n}$ is identified with the element $(g,1)\in \widetilde \Sp_{2n}$. Similarly, there exists a gamma factor $\gamma(s,\pi,\eta,\psi)$ which satisfies similar property as in the $\Sp_{2n}$ case. See \cite{Ka} for more details.

\section{Howe vectors for $\Sp_{2n}$}

In this section, we review the definition and basic properties of Howe vectors for $\Sp_{2n}$ following \cite{Ba1}, and give some preliminary results which will be used in the proof of the stability of gamma factors.

Let $m>0$ be a positive integer and $K_m=(I_{2n}+\Mat_{2n\times 2n}(\CP^m))\cap \Sp_{2n}(F)$ be the standard congruence subgroup of $\Sp_{2n}(F)$. Let $\psi$ be a fixed additive character of $F$ with conductor $\CO_F$. Consider the character $\tau_m$ of $K_m$ defined by
$$\tau_m((k_{ij}))=\psi(\varpi^{-2m} (\sum_{i=1}^{n} k_{i,i+1})).$$
It is easy to check that $\tau_m$ is indeed a character. Let 
$$d_m=\diag(\varpi^{-m(2n-1)}, \varpi^{-m(2n-3)},\dots, \varpi^{-m}, \varpi^m, \dots, \varpi^{m(2n-1)})\in \Sp_{2n}(F)$$
and $H_m=d_mK_m d_m^{-1}$. Define a character $\psi_m$ on $H_m$ by $\psi_m(h)=\tau_m(d_m^{-1}h d_m), h\in H_m$. For a subgroup $S$ of $\Sp_{2n}$, we will denote $S_m:=S\cap H_m$.

\begin{lem}\label{lemma21}
\begin{enumerate}
\item The two characters $\psi_U$ and $\psi_m$ agree on $U_m=U\cap H_m$. 
\item For a positive root $\gamma$ of $\Sp_{2n}$, then $$U_{\gamma,m}=\wpair{\bx_{\gamma}(r): r\in \CP^{-(2\rht(\gamma)-1)m}},$$
and $$U_{-\gamma,m}=\wpair{\bx_{-\gamma}(r): r\in \CP^{(2\rht(\gamma)+1)m }}.$$
Moreover, we have $$U_m=\prod_{\gamma\in \Sigma^+}U_{\gamma,m},$$
where the product on the right side can be taken in any fixed order of $\Sigma^+$.
\end{enumerate}
\end{lem}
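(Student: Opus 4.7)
The plan is to reduce everything to explicit matrix computation using the diagonal form of $d_m$. Writing $(d_m)_{ii} = \varpi^{a_i m}$, I compute the exponents
\begin{equation*}
a_i = -(2(n-i)+1) \ \text{ for } 1 \le i \le n, \qquad a_i = 2(i-n)-1 \ \text{ for } n+1 \le i \le 2n,
\end{equation*}
which satisfy $a_k + a_{2n+1-k} = 0$ (consistent with $d_m \in \Sp_{2n}$). Conjugation by $d_m$ multiplies the $(i,j)$ entry by $\varpi^{(a_j - a_i)m}$. The one identity driving the entire lemma is $a_i - a_j = -2\,\rht(\gamma)$ whenever $\gamma \in \Sigma^+$ corresponds to the matrix position $(i,j)$; this is a routine case-by-case verification for the three families of positive roots $e_i-e_j$, $e_i+e_j$, and $2e_i$, using the explicit expansion of each in terms of the simple roots $\alpha_i, \beta$.

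For part (1), I would take $u = d_m k d_m^{-1} \in U_m$ and just compare superdiagonal entries. The special case $j = i+1$ of the identity above reads $a_i - a_{i+1} = -2$ for every $1 \le i \le n$, including the crossover at $i = n$ where $a_n = -1$ and $a_{n+1} = +1$. Hence $u_{i,i+1} = \varpi^{-2m} k_{i,i+1}$, and summing over $i$ shows $\psi_U(u) = \psi(\varpi^{-2m}\sum_{i=1}^n k_{i,i+1}) = \tau_m(k) = \psi_m(u)$.

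For part (2), the condition $\bx_\gamma(r) \in H_m$ unwinds to $\varpi^{(a_j - a_i)m} r \in \CP^m$ at each nonzero off-diagonal position of $\bx_\gamma(r)$, and the two symmetric positions $(i,j)$ and $(2n+1-j, 2n+1-i)$ yield the same constraint thanks to $a_k + a_{2n+1-k} = 0$. Inserting $a_i - a_j = -2\,\rht(\gamma)$ turns this into $r \in \CP^{-(2\,\rht(\gamma)-1)m}$ for positive $\gamma$; for $-\gamma$ one interchanges $i$ and $j$ and obtains $r \in \CP^{(2\,\rht(\gamma)+1)m}$. For the product decomposition I would invoke the standard fact that $U \cap K_m = \prod_{\gamma \in \Sigma^+}(U_\gamma \cap K_m)$ in any fixed order, which can be proved by induction on height (read off the entry at the highest-height position, peel it off, and iterate). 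Since $d_m$ is diagonal it preserves every root subgroup, so conjugation by $d_m$ transports this decomposition to $U_m = \prod_{\gamma \in \Sigma^+} U_{\gamma,m}$. I anticipate no substantive obstacle here — the lemma is essentially bookkeeping organized around the single identity $a_i - a_j = -2\,\rht(\gamma)$.
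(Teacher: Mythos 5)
Your proposal is correct and is exactly the direct calculation that the paper leaves to the reader: the paper's proof of Lemma \ref{lemma21} consists of the words ``direct calculation'' for (1) and the first part of (2), plus the same citation to Steinberg for the product decomposition of $K_m\cap U$ that you invoke and then transport by the diagonal conjugation. One cosmetic remark: you state that conjugation by $d_m$ multiplies the $(i,j)$ entry by $\varpi^{(a_j-a_i)m}$, whereas $(d_m k d_m^{-1})_{ij}=\varpi^{(a_i-a_j)m}k_{ij}$; your subsequent computations (e.g.\ $u_{i,i+1}=\varpi^{-2m}k_{i,i+1}$ for $u=d_mkd_m^{-1}$, and the condition $\varpi^{(a_j-a_i)m}r\in\CP^m$, which is the entrywise form of $d_m^{-1}\bx_\gamma(r)d_m\in K_m$) both carry the correct signs, so the argument is unaffected.
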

\begin{proof}
One can check (1) and the first part of (2) by direct calculation. The ``moreover" part of (2) comes from the corresponding statement for $K_m\cap U$, cf \cite{St}.
\end{proof}

Let $(\pi,V_\pi)$ be an irreducible smooth $\psi_U$-generic representation of $\Sp_{2n}(F)$. We fix a Whittaker functional $\lambda_\pi\in \Hom_U(\pi,\psi_U)$ and consider the Whittaker functions defined by $\lambda_\pi$, i.e., $W_v(g)=\lambda_\pi(\pi(g)v)$ for $v\in V_\pi$. We will write the identity element $I_{2n}\in \Sp_{2n}(F)$ as 1 for simplicity. We fix a vector $v\in V_\pi$ such that $\lambda_\pi(v)=W_v(1)=1$, and consider the vector
$$v_m=\frac{1}{\vol(U_m)}\int_{U_m}\psi_m(u)^{-1}\pi(u)vdu.$$
Let $C=C(v)$ be an integer such that $v$ is fixed by $\pi(K_C)$ (i.e., $C$ is bigger than the conductor of $v$), then a vector $v_m$ with $m\ge C$ is called a \textbf{Howe vector} as in \cite{Ba1, Ba2}.

\begin{lem}\label{lemma22}
We have
\begin{enumerate}
\item $W_{v_m}(1)=1;$
\item if $m\ge C$, then $\pi(h)v_m=\psi_m(h)v_m$, for all $h\in H_m;$
\item for $k\le m$, we have 
$$v_m=\frac{1}{\vol(U_m)} \int_{U_m}\psi_m^{-1}(u) \pi(u)v_k du.$$
\end{enumerate}
\end{lem}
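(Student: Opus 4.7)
\textbf{Part (1)} is a direct calculation: applying the Whittaker functional $\lambda_\pi$ to the defining integral of $v_m$ and using $\lambda_\pi(\pi(u)v) = \psi_U(u)\lambda_\pi(v) = \psi_U(u)$ (by the Whittaker property and the normalization $W_v(1)=1$) together with $\psi_m|_{U_m} = \psi_U|_{U_m}$ from Lemma \ref{lemma21}(1), the integrand is identically $1$, and $W_{v_m}(1) = 1$ follows.

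\textbf{Part (3)} is handled by Fubini: substitute the definition of $v_k$ into the right-hand side to obtain a double integral over $U_m \times U_k$, noting $U_k \subseteq U_m$ by Lemma \ref{lemma21}(2). The change of variables $u \mapsto u u'^{-1}$ on $U_m$ (valid by left-invariance of Haar measure and $U_k \subseteq U_m$), combined with the character property of $\psi_m$ on $U_m$ and the equality $\psi_m|_{U_k} = \psi_k|_{U_k}$ from Lemma \ref{lemma21}(1), causes $\psi_m(u')\psi_k^{-1}(u')$ to collapse to $1$; the $u'$-integral then contributes $\vol(U_k)$, and $v_m$ is recovered after normalization.

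\textbf{Part (2)} is the technical heart, and I follow Baruch's strategy \cite{Ba1,Ba2}. Two preliminary observations: first, direct unfolding of $\psi_m(h) = \tau_m(d_m^{-1}hd_m)$ shows $\psi_m \equiv 1$ on $T_m := T\cap H_m$ and $\bar U_m := \bar U \cap H_m$, since $d_m^{-1}hd_m$ is either diagonal or strictly lower triangular in $K_m$ with no superdiagonal contribution; second, a root-by-root analysis using the explicit form of $d_m$ shows $\bar U_m \subseteq K_{3m}$ and $T_m \subseteq K_m$ (conjugation by $d_m$ compresses negative-root entries, while diagonal entries already lie in $1+\CP^m$). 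Under the hypothesis $m \geq C$, this gives $\pi(\bar U_m T_m)v = v$. Combined with the Iwahori decomposition $H_m = \bar U_m T_m U_m$ and the character triviality on $\bar U_m T_m$, the claim reduces to the three cases $\pi(u_0)v_m = \psi_m(u_0)v_m$ for $u_0 \in U_m$, $\pi(t)v_m = v_m$ for $t \in T_m$, and $\pi(\bar u)v_m = v_m$ for $\bar u \in \bar U_m$.

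The $U_m$-case is immediate by left-translation on $U_m$, and the $T_m$-case by the change of variable $u \mapsto t^{-1}ut$, using $\pi(t)v = v$ and $T_m$-invariance of $\psi_m$ on $U_m$ (the relevant superdiagonal entries are scaled by units in $1+\CP^m$, invisible to $\psi(\varpi^{-2m}\cdot)$ given the conductor of $\psi$). The $\bar U_m$-case, which I expect to be the main technical obstacle, proceeds by writing
\[ \pi(\bar u)v_m = \vol(U_m)^{-1}\int_{U_m}\psi_m^{-1}(u)\pi(\bar u u \bar u^{-1})v\,du \]
(using $\pi(\bar u)v = v$), decomposing $\bar u u \bar u^{-1} = u' t' \bar u'$ via Iwahori in $H_m$, and using $\pi(t'\bar u')v = v$ to reduce the integrand to $\psi_m^{-1}(u)\pi(u')v$. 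The key closing observation, and where the main bookkeeping lies, is that the Iwahori correction $u(u')^{-1}$ has all its root-component parameters in $\CP^m$ (by height-based analysis of the Chevalley commutator relations applied to the valuation bounds of Lemma \ref{lemma21}(2) for $U_m$ and $\bar U_m$), hence lies in $K_m \subseteq K_C$, so $\pi(u)v = \pi(u')v$; the integrand then simplifies to $\psi_m^{-1}(u)\pi(u)v$ and the integral recovers $v_m$.
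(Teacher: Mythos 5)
Parts (1) and (3), and the $U_m$ and $T_m$ cases of your part (2), are correct and are what the cited argument of Baruch does. The gap is in your $\bar U_m$ case, specifically the closing claim that the Iwahori correction between $u$ and $u'$ lies in $K_m\subseteq K_C$, so that $\pi(u')v=\pi(u)v$. That claim is false once $n\ge 2$. Already in $\Sp_4$: take $\bar u=\bx_{-\alpha_1}(s)$ with $s$ of valuation exactly $3m$ (the bound allowed by Lemma \ref{lemma21}(2)) and $u=\bx_{2\alpha_1+\beta}(r)$ with $r$ of valuation exactly $-5m$. A direct matrix computation gives $\bar u u\bar u^{-1}=u'=\bx_{2\alpha_1+\beta}(r)\bx_{\alpha_1+\beta}(rs)\bx_{\beta}(rs^2)$, so $u^{-1}u'=\bx_{\alpha_1+\beta}(rs)\bx_{\beta}(rs^2)$ with $rs$ of valuation $-2m$. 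The general height count confirms this: a commutator component on the root $j\gamma-i\delta$ has parameter of valuation at least $(-2\rht(j\gamma-i\delta)+i+j)m$, which guarantees membership in $U_{j\gamma-i\delta,m}$ (since $i+j\ge 1$) but by no means in $K_m$. So the correction is not in $K_C$, $\pi(u^{-1}u')v\ne v$ in general, and the integrand does not collapse to $\psi_m^{-1}(u)\pi(u)v$ as you assert. (A secondary slip: even were the correction small, you would need it on the right, i.e.\ $u^{-1}u'\in\Stab(v)$ rather than $u(u')^{-1}$, to conclude $\pi(u')v=\pi(u)\pi(u^{-1}u')v=\pi(u)v$.)

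There are two ways to close the argument. The computation above does show that the \emph{simple-root} components of $u^{-1}u'$ lie in $\CO$ (for a target root of height $1$ the exponent is $(i+j-2)m\ge 0$), hence $\psi_m(u')=\psi_m(u)$; one can then verify that $u\mapsto u'$ is a measure-preserving bijection of $U_m$ and change variables, rather than trying to identify $\pi(u')v$ with $\pi(u)v$ pointwise. But the cleaner route --- the one the paper intends by citing Baruch, and the one it writes out explicitly in the metaplectic analogue, Lemma \ref{lemma52} --- avoids the conjugation bookkeeping entirely: set $\tilde v_m=\vol(H_m)^{-1}\int_{H_m}\psi_m(h)^{-1}\pi(h)v\,dh$, which is tautologically an $(H_m,\psi_m)$-eigenvector by left invariance of the Haar measure; then the Iwahori decomposition $H_m=U_m T_m\bar U_m$, together with $\psi_m|_{T_m\bar U_m}=1$ and $\pi(T_m\bar U_m)v=v$ (both of which you already established), gives $\tilde v_m=v_m$ in one line. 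I recommend replacing your $\bar U_m$ computation with this averaging argument.
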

\begin{proof}
Only (2) needs some work. The key ingredient of the proof of (2) is the Iwahori decomposition of $K_m$ and hence of $J_m$. The details can be found is Lemma 3.2 \cite{Ba1}, or Lemma 5.2 \cite{Ba2} in the $\RU(2,1)$ case. Baruch's thesis \cite{Ba1} is not published, but the proof in our case is the same as the proof in the $\RU(2,1)$ case which is given in \cite{Ba2}.
\end{proof}
By (2) of Lemma \ref{lemma22}, for $m\ge C$, the partial Bessel function $W_{v_m}(g)$ satisfies the relation
\begin{equation}\label{eq21}W_{v_m}(u gh)=\psi_U(u)\psi_m(h)W_{v_m}(g), \forall u\in U, h\in H_m, g\in \Sp_{2n}(F). \end{equation}

\begin{lem}{\label{lemma23}}
For $m\ge C$ and $t\in T$, if $W_{v_m}(t)\ne 0$, then $\alpha_i(t)\in 1+\CP^m$ for all $i$ with $1\le i \le n-1$ and $\beta(t)\in 1+\CP^m$.
\end{lem}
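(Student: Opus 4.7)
The plan is to test $W_{v_m}$ at $t$ against the one-parameter root subgroups $\bx_\gamma(r)$ for each simple root $\gamma \in \{\alpha_1,\dots,\alpha_{n-1},\beta\}$, exploiting the fact that these lie simultaneously in $U$ (so the left $\psi_U$-equivariance applies) and, for $r$ in a suitable fractional ideal, in $H_m$ (so the right $\psi_m$-equivariance applies). The two computations of $W_{v_m}(t\bx_\gamma(r))$ must agree, and comparing the resulting characters of $r$ will force $\gamma(t)$ to be $\equiv 1 \pmod{\CP^m}$.

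Concretely, fix a simple root $\gamma$, which has $\rht(\gamma)=1$. By Lemma~\ref{lemma21}(2), $\bx_\gamma(r)\in U_{\gamma,m}\subset H_m$ exactly when $r\in\CP^{-m}$, and by Lemma~\ref{lemma21}(1), $\psi_m(\bx_\gamma(r))=\psi_U(\bx_\gamma(r))=\psi(r)$ for such $r$. Compute $W_{v_m}(t\bx_\gamma(r))$ in two ways using \eqref{eq21}. On one hand, right-invariance under $H_m$ gives
\[
W_{v_m}(t\bx_\gamma(r))=\psi_m(\bx_\gamma(r))W_{v_m}(t)=\psi(r)W_{v_m}(t).
\]
On the other hand, since $t\bx_\gamma(r)t^{-1}=\bx_\gamma(\gamma(t)r)\in U$, the left $\psi_U$-equivariance yields
\[
W_{v_m}(t\bx_\gamma(r))=W_{v_m}(\bx_\gamma(\gamma(t)r)\,t)=\psi_U(\bx_\gamma(\gamma(t)r))W_{v_m}(t)=\psi(\gamma(t)r)W_{v_m}(t).
\]
If $W_{v_m}(t)\neq 0$, equating these yields $\psi\bigl((\gamma(t)-1)r\bigr)=1$ for every $r\in\CP^{-m}$. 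Since $\psi$ has conductor $\CO$, this forces $(\gamma(t)-1)\CP^{-m}\subset\CO$, i.e., $\gamma(t)-1\in\CP^m$.

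Applying this with $\gamma=\alpha_i$ for each $1\le i\le n-1$ and with $\gamma=\beta$ yields the two conclusions of the lemma. There is no real obstacle: the whole argument is a two-line calculation once one has the Iwahori decomposition packaged in Lemma~\ref{lemma21} and the bi-equivariance \eqref{eq21}. The only point worth double-checking is the compatibility $\psi_m|_{U_m}=\psi_U|_{U_m}$ at $\bx_\gamma(r)$, which is precisely Lemma~\ref{lemma21}(1) and relies on the fact that the generic character $\psi_U$ is built from the entries at positions $(i,i+1)$ for $1\le i\le n$, which are exactly the simple-root coordinates.
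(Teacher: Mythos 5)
Your proof is correct and is essentially identical to the paper's own argument: both conjugate $\bx_\gamma(r)$ for $r\in\CP^{-m}$ across $t$, compare the right $\psi_m$-equivariance with the left $\psi_U$-equivariance via Eq.~(\ref{eq21}), and use that $\psi$ has conductor $\CO$ to conclude $\gamma(t)\in 1+\CP^m$. No issues.
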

\begin{proof}
Write $\gamma$ for a general simple root. Take an element $r\in \CP^{-m}$. We have the relation
$$t\bx_\gamma(r)= \bx_\gamma( \gamma(t)r) t.$$
Since $\bx_{\gamma}(r)\in U_m\subset H_m$, then by Eq.(\ref{eq21}), we have 
$$\psi_m(\bx_{\gamma}(r))W_{v_m}(t)=\psi_U(\bx_\gamma(\gamma(t)r)) W_{v_m}(t),$$
for $m\ge C$.
Thus if $W_{v_m}(t)\ne 0$, we get $\psi_m(\bx_{\gamma}(r))=\psi_U(\bx_{\gamma}(\gamma(t)r))$, or $\psi(r)=\psi(\gamma(t)r)$ for all $r\in \CP^{-m}$. Since $\psi$ has conductor $\CO_F$, we get $\gamma(t)-1\in \CP^m$, or $\gamma(t)\in 1+\CP^m$. This proves the Lemma.
\end{proof}

\begin{lem}{\label{lemma24}}
Suppose that the residue characteristic of $F$ is not $2$, then the square map $1+\CP^m\ra 1+\CP^m$ is well-defined and surjective.
\end{lem}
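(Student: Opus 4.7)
The plan is to verify the two claims (well-definedness and surjectivity) separately, both by direct $p$-adic arithmetic.

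For well-definedness, I would take $x = 1+a$ with $a\in\CP^m$ and expand $x^2 = 1 + 2a + a^2$. Since the residue characteristic of $F$ is odd, $2$ is a unit in $\CO$, so $2a\in \CP^m$, and $a^2\in\CP^{2m}\subset\CP^m$. Hence $x^2\in 1+\CP^m$.

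For surjectivity, the cleanest route is Hensel's lemma. Given $y\in 1+\CP^m$, consider the polynomial $f(X)=X^2-y\in\CO[X]$ with initial approximation $X_0=1$. Then $f(1)=1-y\in\CP^m$ and $f'(1)=2$, which is a unit under the hypothesis on the residue characteristic. Thus $|f(1)|<|f'(1)|^2$, so Hensel's lemma produces a unique $x\in\CO$ with $f(x)=0$ and $|x-1|\le |f(1)/f'(1)|\le q_F^{-m}$, i.e., $x\in 1+\CP^m$ and $x^2=y$. (Equivalently, one can solve $a(2+a)=b$ for $a\in\CP^m$ by the contraction $a_{k+1}=b/(2+a_k)$, which converges in the $\CP$-adic topology because $2+a_k$ is a unit for every $k$.)

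The statement is essentially a standard consequence of Hensel's lemma, so there is no real obstacle; the only point worth flagging is the necessity of the residue characteristic hypothesis, which enters precisely in order to make $f'(1)=2$ a unit (equivalently, to ensure $2a$ does not jump into a lower power of $\CP$ in the expansion of $(1+a)^2$).
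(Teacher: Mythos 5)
Your proof is correct and is exactly the argument the paper has in mind: the paper simply cites Newton's (Hensel's) Lemma from Lang and omits the details, and your write-up supplies precisely those details, with the hypothesis on the residue characteristic entering to make $f'(1)=2$ a unit. No gaps.
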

\begin{proof}
This is a simple application of Newton's Lemma, Proposition 2, Chapter II of \cite{Lg}. We omit the details. 
\end{proof}

Note that the center $Z$ of $\Sp_{2n}(F)$ is $\wpair{\pm I_{2n}}$. We will write the identity matrix $I_{2n}\in \Sp_{2n}$ as $1$ for simplicity.
\begin{cor}\label{cor25} Suppose the residue characteristic of $F$ is not $2$. Let $t=\diag(a_1,\dots,a_n,a_n^{-1},\dots, a_1^{-1})\in T$, and $m\ge C$, then
$$W_{v_m}=\left\{\begin{array}{lll}\omega_\pi(e), & \textrm{ if } t=e\cdot \diag(a_1',\dots,(a_1')^{-1}), \textrm{ for }a_i'\in 1+\CP^m, e=\pm1; \\ 0, & \textrm{ otherwise,} \end{array} \right.$$
where $\omega_\pi$ is the central character of $\pi$.
\end{cor}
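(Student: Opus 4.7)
The plan is to combine Lemma \ref{lemma23}, Lemma \ref{lemma24}, property (2) of Lemma \ref{lemma22}, and the action of the center in a straightforward way.

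First I would observe that if $W_{v_m}(t)\ne 0$, then Lemma \ref{lemma23} forces $a_i/a_{i+1}\in 1+\CP^m$ for $1\le i\le n-1$ and $a_n^2\in 1+\CP^m$. The second condition, by Lemma \ref{lemma24} together with the hypothesis that the residue characteristic is odd, implies that $a_n^2=(a_n')^2$ for some $a_n'\in 1+\CP^m$; hence $a_n=e\,a_n'$ for a unique $e\in\{\pm1\}$. Setting $a_i':=a_i/e$ for all $i$, one has $a_n'\in 1+\CP^m$ and $a_i'/a_{i+1}'=a_i/a_{i+1}\in 1+\CP^m$, so by descending induction on $i$ all $a_i'$ lie in $1+\CP^m$. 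Thus any $t$ with $W_{v_m}(t)\ne 0$ must have the form $t=e\cdot t'$ with $t'=\diag(a_1',\dots,a_n',(a_n')^{-1},\dots,(a_1')^{-1})$ and each $a_i'\in 1+\CP^m$, establishing the vanishing branch.

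Next, for such a $t$, I would note that the diagonal matrix $t'$ commutes with $d_m$, so $d_m^{-1}t'd_m=t'\in K_m$; in particular $t'\in H_m$. Since $t'$ is diagonal, its $(i,i+1)$-entries vanish, so $\psi_m(t')=\tau_m(t')=\psi(0)=1$. Using that $eI_{2n}\in Z$ acts by $\omega_\pi(e)$ and applying part (2) of Lemma \ref{lemma22} (valid because $m\ge C$), I compute
\[
W_{v_m}(t)=\lambda_\pi\bigl(\pi(eI_{2n})\pi(t')v_m\bigr)=\omega_\pi(e)\,\psi_m(t')\,W_{v_m}(1)=\omega_\pi(e),
\]
using $W_{v_m}(1)=1$ from part (1) of Lemma \ref{lemma22}.

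I do not foresee any serious obstacle: everything is a clean consequence of the lemmas just proved. The only delicate point is the need to pass from $a_n^2\in 1+\CP^m$ to $a_n\in\pm(1+\CP^m)$, which is precisely where the odd residue characteristic hypothesis enters through Lemma \ref{lemma24}. Once that sign $e$ is extracted, the rest is a bookkeeping check that $t'$ lands in $T\cap H_m$ and that $\psi_m$ is trivial on the diagonal.
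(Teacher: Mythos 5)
Your proposal is correct and follows essentially the same route as the paper's own proof: extract the sign $e$ from $a_n^2\in 1+\CP^m$ via Lemma \ref{lemma24}, propagate it down the diagonal by induction using $a_i/a_{i+1}\in 1+\CP^m$, and then use that the resulting $t'$ lies in $H_m$ with $\psi_m(t')=1$ together with the central character. The only cosmetic difference is that you define $a_i'=a_i/e$ all at once and check membership in $1+\CP^m$ afterwards, whereas the paper builds each $a_i'$ recursively; the content is identical.
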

\begin{proof}
Suppose that $W_{v_m}(t)\ne 0$, then $a_i/a_{i+1}\in 1+\CP^m$ for $1\le i\le n-1$ and $a_n^2\in 1+\CP^m$ by Lemma \ref{lemma23}. By Lemma \ref{lemma24}, we can find an element $a_n'\in 1+\CP^m$ such that $(a_n')^2=a_n^2$. Thus $a_n=ea_n$ for some $e\in \wpair{\pm 1}$. Since $a_{n-1}/a_n\in 1+\CP^m$, we can write $a_{n-1}=ea_{n-1}'$ for  $a_{n-1}'=a_n' \cdot \frac{a_{n-1}}{a_n} \in 1+\CP^m$. Inductively, we have $a_i=ea_i'$ for some $a_i'\in 1+\CP^m$. Then $$t=e\cdot \diag(a_1', \dots, a_{n'}, (a_n')^{-1},\dots, (a_1')^{-1}), a_i'\in 1+\CP^m,$$
where we don't distinguish $e$ and $\diag(e,\dots,e)$ by abuse of notation. Since $\diag(a_1',\dots, (a_1')^{-1})\in H_m$, then by Lemma \ref{lemma22} or Eq.(\ref{eq21}), we get
$$W_{v_m}(t)=W_{v_m}(e)=\omega_\pi(e)W_{v_m}(1)=\omega_\pi(e).$$
This completes the proof.
\end{proof}

For $a\in F^\times$, denote $\bt(a)=\diag(a,1,1,\dots,1,1,a^{-1})$.
\begin{lem}{\label{lemma26}}
For $a\in F^\times $ and $y={}^t\!(y_1,\dots,y_{n-2})\in \Mat_{(n-2)\times 1}(F)$, then for $m\ge C$, we have
 $$W_{v_m}(\bt(a)j(r(y,0)))=\left\{ \begin{array}{lll}W_{v_m}(\bt(a)), & \textrm{ if }y_i\in \CP^{(2i+1)m}, \textrm{ for all }i, 1\le i\le n-2, \\ 0, & \textrm{otherwise.} \end{array}\right.$$ 
\end{lem}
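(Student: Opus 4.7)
The statement is a dichotomy, and my plan is to prove the two directions separately, as they use rather different ingredients. The ``if'' case is a direct application of the right $(H_m,\psi_m)$-equivariance, while the ``vanishing'' case is genuinely inductive and relies on the $\SL_2$-Bruhat decomposition together with Corollary~\ref{cor25}.

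For the \textbf{if direction} (all $y_i\in \CP^{(2i+1)m}$): I would first decompose $j(r(y,0)) = \prod_{k=2}^{n-1}\bx_{e_k-e_1}(y_{k-1})$, a commuting product, since for distinct $k,l\ge 2$ the sum $(e_k-e_1)+(e_l-e_1)$ is not a root of $\Sp_{2n}$. The positive root $e_1-e_k$ has height $k-1$, so Lemma~\ref{lemma21}(2) says $\bx_{e_k-e_1}(y_{k-1})\in U_{-(e_1-e_k),m}\subset H_m$ precisely when $y_{k-1}\in\CP^{(2(k-1)+1)m}=\CP^{(2k-1)m}$, our hypothesis. Hence $j(r(y,0))\in H_m$. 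Moreover $\psi_m$ is trivial on each factor: the off-diagonal support of $d_m^{-1}\bx_{e_k-e_1}(r)d_m$ sits at $(k,1)$ and $(2n,2n+1-k)$, neither of which is super-diagonal (because $k\ge 2$), so $\tau_m$ evaluates to $1$. Invoking the right $(H_m,\psi_m)$-equivariance from Lemma~\ref{lemma22}(2) with $g=\bt(a)$ and $h=j(r(y,0))$ then gives $W_{v_m}(\bt(a)j(r(y,0)))=\psi_m(j(r(y,0)))W_{v_m}(\bt(a))=W_{v_m}(\bt(a))$.

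For the \textbf{vanishing direction}, let $i_0$ be the largest index with $y_{i_0}\notin\CP^{(2i_0+1)m}$. By maximality and the if-direction applied factor by factor, the factors $\bx_{e_{j+1}-e_1}(y_j)$ with $j>i_0$ individually lie in $H_m$ with trivial $\psi_m$; so right-equivariance reduces the problem to $y_j=0$ for all $j>i_0$. In this reduced situation I apply the $\SL_2$-Bruhat identity in the root $\SL_2$ for $\alpha=e_1-e_{i_0+1}$:
\[
\bx_{e_{i_0+1}-e_1}(y_{i_0})=\bx_{\alpha}(y_{i_0}^{-1})\,\alpha^\vee(y_{i_0}^{-1})\,s_\alpha\,\bx_{\alpha}(y_{i_0}^{-1}).
\]
Commuting the leftmost $\bx_\alpha(y_{i_0}^{-1})$ past $\bt(a)$ via $\bt(a)\bx_\alpha(s)=\bx_\alpha(as)\bt(a)$ and past $X=\prod_{k=2}^{i_0}\bx_{e_k-e_1}(y_{k-1})$ (the Chevalley commutators produce only further positive root vectors $\bx_{e_k-e_{i_0+1}}(\cdot)$, all in $U$; no higher-order terms appear because $2\alpha+\beta$ is not a root for the relevant $\beta$), I then absorb these positive-root factors by the left $(U,\psi_U)$-equivariance, obtaining $W_{v_m}(\bt(a)j(r(y,0)))=c(y,a)\,W_{v_m}(\bt(a)\alpha^\vee(y_{i_0}^{-1})\,s_\alpha\,R)$ where $c(y,a)$ is an explicit $\psi$-character factor and $R$ is a residual unipotent piece. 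Writing $s_\alpha=\bx_\alpha(1)\bx_{-\alpha}(-1)\bx_\alpha(1)$ and iterating the equivariances reduces the inner Whittaker value to one at a torus element whose first diagonal entry is $ay_{i_0}^{-1}$; Corollary~\ref{cor25} then forces $ay_{i_0}^{-1}\in\pm(1+\CP^m)$ (and similar conditions on the other transformed diagonal entries) for nonvanishing, which combined with the appropriate ideal tracking yields $y_{i_0}\in\CP^{(2i_0+1)m}$, contradicting the assumption. Hence $W_{v_m}(\bt(a)j(r(y,0)))=0$.

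The \textbf{main obstacle} is the final reduction: transporting the Weyl element $s_\alpha$ through the residual unipotent factors (and through $\bt(a)\alpha^\vee(y_{i_0}^{-1})$) to arrive at a Whittaker value at a clean torus element where Corollary~\ref{cor25} is applicable, all while keeping track of the Chevalley signs for $\Sp_{2n}$ and the precise $\CP^{jm}$-filtrations on the parameters. A secondary subtlety is the boundary case $i_0=1$, where $\alpha=\alpha_1$ is simple and the character contributions to $\psi_U$ are non-trivial, requiring a slightly different (but parallel) computation than for $i_0\ge 2$.
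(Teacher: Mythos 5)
Your ``if'' direction is correct and is essentially the paper's argument: each factor of $j(r(y,0))$ is a negative root vector lying in $H_m$ under the stated ideal conditions, $\psi_m$ is trivial on it, and right $(H_m,\psi_m)$-equivariance finishes the job. The reduction at the start of your vanishing direction (stripping off the factors with $j>i_0$) is also fine and matches the paper.

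The vanishing argument itself, however, has a genuine gap. After applying the $\SL_2$-Bruhat identity to $\bx_{e_{i_0+1}-e_1}(y_{i_0})$ you land on an element of the form $\bt(a)\,\alpha^\vee(y_{i_0}^{-1})\,s_\alpha\,R$ with $R$ unipotent and positive; this element lies in the Bruhat cell $Bs_\alpha B$, \emph{not} in $B$. Corollary~\ref{cor25} only controls $W_{v_m}$ on the torus, i.e.\ on the cell of the identity Weyl element, so it says nothing about this value. Your proposed fix --- writing $s_\alpha=\bx_\alpha(1)\bx_{-\alpha}(-1)\bx_\alpha(1)$ and ``iterating the equivariances'' --- does not terminate at a torus element: it merely reintroduces a negative root vector with parameter $-1$, which is not in $H_m$, so you are back where you started. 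Controlling $W_{v_m}$ on lower Bruhat cells is exactly the content of the (much harder) Section~3 machinery (Lemma~\ref{lemma312}(2), Theorem~\ref{thm311}), which cannot be presupposed here. The paper's actual proof avoids all of this: with $i$ the maximal bad index, it conjugates $\bm_n\left(\begin{smallmatrix} a&&\\ y^i&I_i&\\ &&I_{n-i-1}\end{smallmatrix}\right)$ by $X(r)=\bx_{\alpha_1+\cdots+\alpha_{i+1}}(r)\in H_m$ with $r\in\CP^{-(2i+1)m}$; the commutator is a single positive unipotent factor on which $\psi_U$ evaluates to $\psi(y_i r)$, while $\psi_m(X(r))=1$, giving $W=\psi(y_i r)W$ and hence $W=0$ upon choosing $r$ with $\psi(y_i r)\neq 1$. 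You should replace your Bruhat-decomposition step with this character-comparison via conjugation by an element of $H_m$.
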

\begin{proof}
We have 
\begin{equation} \label{eq22}\bt(a)j(r(y,0))=\bm_n\begin{pmatrix} a & &\\ y& I_{n-2} & \\ &&1 \end{pmatrix}.\end{equation}
If $y_i\in \CP^{(2i+1)m}$, then $j(r(y,0))\in H_m\cap \bar U$, and thus $W_{v_m}( \bt(a)j(r(y,0)))=W_{v_m}(\bt(a))$ by Lemma \ref{lemma22}, or Eq.(\ref{eq21}) for $m\ge C$. Now we suppose $y_i\notin \CP^{(2i+1)m}$ for some $i$. Let $i$ be the biggest integer with this property, i.e., $i$ satisfies $1\le i\le n-2$, $y_i\notin \CP^{(2i+1)m}$ and $y_j\in \CP^{(2j+1)m}$ for all $j$ with $n-2\ge j>i$. By Eq.(\ref{eq21}) and Eq.(\ref{eq22}), we have 
\begin{equation}\label{eq23}W_{v_m}(\bt(a)j(r(y,0)))=W_{v_m}\left( \bm_n\begin{pmatrix} a&&\\ y^i & I_i & \\ &&I_{n-i-1}\end{pmatrix}\right),\end{equation}
where $y^i={}^t\!(y_1,\dots, y_i)\in \Mat_{i\times 1}(F)$. Take $r\in \CP^{-(2i+1)m}$ so that 
$$X(r):=\bx_{\alpha_1+\dots \alpha_i+\alpha_{i+1}}(r)=\bm_n\begin{pmatrix} I_n+re_{1,i+2} \end{pmatrix}\in H_m,$$
where $e_{1,i+2}$ is the $n\times n$ matrix with 1 in the $(1,i+2)$ position, and zero elsewhere.  We have the relation
\begin{equation} \label{eq24}X(-r) \bm_n\begin{pmatrix} a&&\\ y^i & I_i & \\ &&I_{n-i-1}\end{pmatrix}X(r)=\bm_n\begin{pmatrix}1&&&\\ &I_i & y^i r&\\ &&1&\\ &&&I_{n-i-2}  \end{pmatrix} \bm_n\begin{pmatrix} a&&\\ y^i & I_i & \\ &&I_{n-i-1}\end{pmatrix}.\end{equation}
Note that $\psi_m(X(r))=\psi_U(X(r))=1$, and 
$$ \psi_U \left( \bm_n\begin{pmatrix}1&&&\\ &I_i & y^i r&\\ &&1&\\ &&&I_{n-i-2}  \end{pmatrix} \right)=\psi(y_ir).$$
From Eq.(\ref{eq21}), Eq.(\ref{eq23}) and Eq.(\ref{eq24}), we get
$$W_{v_m}(\bt(a) j(r(y,0)))=\psi(y_i r)W_{v_m}(\bt(a) j(r(y,0))). $$
Since $y_i\notin \CP^{(2i+1)m}$, we can take $r\in \CP^{-(2i+1)m}$ such that $\psi(y_ir)\ne 1$. Thus $W_{v_m}(\bt(a)j(r(y,0)))=0$. This completes the proof.
\end{proof}

\section{A stability property of partial Bessel functions}
In this section, we prove a stability property of partial Bessel functions associated with Howe vectors (Theorem \ref{thm311} and Corollary \ref{cor313}), which is the key ingredient of the proof of the stability of the gamma factors.
\subsection{Weyl elements and root spaces}
We recall some notations on the roots and Weyl element of $\Sp_{2n}$. For $t=\diag(a_1,a_2,\dots, a_n,  a_n^{-1},\dots, a_1^{-1})$, the simple roots are given by $$\alpha_i(t)=\frac{a_i}{a_{i+1}}, 1\le i \le n-1, \beta(t)=a_n^2.$$
The Weyl group $\bW$ is generated by $s_{\alpha_i}$ and $s_\beta$. 

We recall the notion of Bruhat order on $\bW$. For $w\in \bW$ with a minimal expression $s_{\xi_1}\dots s_{\xi_l}$ where $\xi_i$ are simple roots. We say that $w'\le w$ if $w'$ can be written as $w'=s_{\xi_{t_1}}\dots s_{\xi_{t_k}}$ with $t_1,\dots t_k \in \wpair{1,2,\dots, l}$ and $t_1<t_2<\dots$, i.e., $w'$ can be written as sub-expression of a minimal expression of $w$, see \cite{Hu}. The definition of the Bruhat order does not depend on the choice of minimal expression of $w$. We will say that $w'<w$ if $w'\le w$ and $w'\ne w$.

Let $\Sigma^+$ be the set of positive roots. Let $U_M$ be the upper triangular unipotent of $\GL_n(F)$ and $N$ be the Siegel unipotent of $\Sp_{2n}(F)$. We will say a positive root $\gamma$ is in $U_M$ or $N$, if the root space of $\gamma$ is in $M$ or $N$. Suppose that $\gamma\in U_M$, then the root space of $\gamma$ is in the $(i,j)$-position, with $1\le i<j\le n$, and the root $\gamma$ is $\sum_{t=i}^{j-1}(\alpha_t)$. If $\gamma\in N$, then the root space of $\gamma$ is in the $(i,k)$ position with $1\le i\le n <k\le 2n$. By symmetry of the root spaces in $N$, we can assume that $i+k\le 2n+1$. If $i+k=2n+1$, i.e., the root space of $\gamma$ is in the skew diagonal, then $\gamma=2\sum_{t=i}^{n-1}(\alpha_t)+\beta$. If $i+k<2n+1$, we put $j=2n+1-i$, then $\gamma=\sum_{t=i}^{j-1}(\alpha_t)+2\sum_{t=j}^{n-1}(\alpha_t)+\beta$.

 \begin{lem}{\label{lemma31}}
Let $\gamma_1,\gamma_2\in \Sigma^+$, $\gamma_1\ne \gamma_2$, $\frac{1}{2}\rht(\gamma_2)<\rht(\gamma_1)\le \rht(\gamma_2) $ and $\pair{\gamma_2,\gamma_1^\vee }=2$, then there exists integers $i,j$, with $1\le i<j \le n$ such that $\gamma_2=2\alpha_{i}+2\alpha_{i+1}+\dots +2\alpha_{n-1}+\beta$ and $\gamma_1=\alpha_{i}+\alpha_{i+1}+\dots +\alpha_{j-1}+2\alpha_{j}+2\alpha_{j+1}+\dots +2\alpha_{n-1}+\beta$.
\end{lem}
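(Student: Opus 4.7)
The plan is to translate the problem into the standard coordinate realization of the $C_n$ root system and enumerate pairs $(\gamma_1,\gamma_2)$ satisfying the pairing constraint. Write the positive roots of $\Sp_{2n}$ in the familiar model as $e_a-e_b$ and $e_a+e_b$ for $1\le a<b\le n$ (short, length-squared $2$) together with $2e_a$ for $1\le a\le n$ (long, length-squared $4$), where $\alpha_i=e_i-e_{i+1}$ and $\beta=2e_n$.

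The first step is to use general root-system theory to pin down the lengths of $\gamma_1$ and $\gamma_2$. Since $C_n$ contains no $G_2$-subsystem, the Cartan integer product $\pair{\gamma_2,\gamma_1^\vee}\pair{\gamma_1,\gamma_2^\vee}$ takes values in $\{0,1,2,4\}$, with $4$ iff $\gamma_1=\pm\gamma_2$. The hypothesis $\pair{\gamma_2,\gamma_1^\vee}=2$ together with $\gamma_1\ne\gamma_2$ (and both positive, ruling out $\gamma_1=-\gamma_2$) forces $\pair{\gamma_1,\gamma_2^\vee}=1$: the value $0$ is excluded because the two Cartan integers vanish simultaneously, and the value $2$ would give $\gamma_1=\gamma_2$. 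Taking the ratio then yields $|\gamma_2|^2=2|\gamma_1|^2$, so $\gamma_2$ is long and $\gamma_1$ is short. Hence $\gamma_2=2e_i$ for some $i\in\{1,\dots,n\}$, which in simple-root coordinates reads $\gamma_2=2\alpha_i+\dots+2\alpha_{n-1}+\beta$.

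Next, since $\gamma_1$ is short one has $\gamma_1^\vee=\gamma_1$, so $\pair{\gamma_2,\gamma_1^\vee}=(\gamma_1,\gamma_2)=2(\gamma_1,e_i)=2$, giving $(\gamma_1,e_i)=1$. The short positive roots with this property split into three families: (A) $e_i-e_k$ with $i<k\le n$; (B) $e_k+e_i$ with $1\le k<i$; (C) $e_i+e_k$ with $i<k\le n$. Using the height formulas
\[
\rht(e_a-e_b)=b-a,\qquad \rht(e_a+e_b)=2n-a-b+1 \ (a<b),\qquad \rht(2e_a)=2n-2a+1,
\]
one checks that family (A) gives $\rht(\gamma_1)\le n-i<\frac{1}{2}\rht(\gamma_2)$, violating the lower bound; family (B) gives $\rht(\gamma_1)=2n-i-k+1>2n-2i+1=\rht(\gamma_2)$, violating the upper bound; and family (C) gives $\rht(\gamma_1)=2n-i-k+1$, which automatically lies strictly between $\frac{1}{2}\rht(\gamma_2)$ and $\rht(\gamma_2)$ for every $i<k\le n$. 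Setting $j=k$ and converting back to simple roots produces the asserted expression for $\gamma_1$.

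The argument is essentially bookkeeping inside the $C_n$ root system, so I anticipate no real obstacle; the only point demanding some care is verifying that the upper height bound alone suffices to rule out family (B), since there is no algebraic obstruction to those roots pairing correctly with $\gamma_2=2e_i$. The height hypothesis is exactly what symmetrizes the situation and isolates family (C).
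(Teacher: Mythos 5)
Your proof is correct. The route is the mirror image of the paper's: the paper fixes $\gamma_1$, splits according to whether its root space sits in $U_M$, on the skew diagonal of $N$, or off it, computes $\gamma_1^\vee$ explicitly in each case, and then lists the possible $\gamma_2$ with $\pair{\gamma_2,\gamma_1^\vee}=2$, discarding all but one by the height inequalities. You instead open with the general fact that $\pair{\gamma_2,\gamma_1^\vee}\pair{\gamma_1,\gamma_2^\vee}\in\{0,1,2,4\}$ to force $\pair{\gamma_1,\gamma_2^\vee}=1$ and hence $|\gamma_2|^2=2|\gamma_1|^2$, which immediately pins $\gamma_2$ down to a long root $2e_i$ before any enumeration; the remaining case analysis over short $\gamma_1$ with $(\gamma_1,e_i)=1$ is then a three-line height computation in the $e_a\pm e_b$ coordinates. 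What your version buys is that the length argument replaces the paper's case-by-case computation of coroots and its (slightly garbled in the paper) dismissal of the case where $\gamma_1$ itself is long; what the paper's version buys is that it stays in the matrix-position language used throughout the rest of Section~3, which is the form in which the lemma is actually applied. Both arguments are complete; all of your height formulas and the verification that family (C) automatically satisfies both inequalities check out.
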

We will call a pair $(\gamma_1,\gamma_2)$ of positive roots which satisfies the condition of Lemma \ref{lemma31} a bad pair of positive roots. For the reason that such a pair is ``bad", see Lemma \ref{lemma34}.
\begin{proof}
We first consider $\gamma_1\in U_M$, say, the root space of $\gamma_1$ is in the $(i,j)$ position with $1\le i<j\le n$. Then $\gamma_1^\vee(t)=\diag(1,\dots, 1,t, 1,\dots, t^{-1},1,\dots 1, t, \dots,1,t^{-1},1,\dots,1)$ with $t$ in the $i$ and $2n+1-j$ position and $t^{-1}$ in the $j$ and $2n+1-j$ position. The only positive root $\gamma_2$ with $\gamma_2\ne \gamma_1$ and $\pair{\gamma_2,\gamma_1^\vee}=2$ has root space in the $ (i,2n+1-i)$ position, i.e., $\gamma_2=2(\alpha_i+\dots+\alpha_{n-1})+\beta$. It is clear that $\rht(\gamma_2)=2(n-i)+1>2(j-i)=2\rht(\gamma_1)$. Thus the pair $(\gamma_1,\gamma_2)$ does not satisfy the condition. 

Next, consider the case $\gamma_1\in N$, say, the root space of $\gamma_1$ is in the $(i,k)$ position with $ 1\le i \le n$, $n+1\le k \le 2n$ and $i+k\le 2n+1$. If $i+k=2n+1$, then $\gamma_1^\vee(t)=\diag(1,\dots,1,t,1,\dots,1,\dots,1,t^{-1},1,\dots,1)$, with $t$ in the $i$ position and $t^{-1}$ in the $j=2n+1-i$ position. There is no $\gamma_2$ other than $\gamma_1$ itself such that $\pair{\gamma_2,\gamma_1^\vee}=1$. If $i+k< 2n+1$, let $j=2n+1-k$. Then $i<j\le n$ and we have $\alpha_1=\alpha_{i}+\dots \alpha_{i-1}+2(\alpha_j+\dots+\alpha_{n-1})+\beta$. On the other hand $\gamma_1^\vee(t)$ is the diagonal element with $t$ in the $i$ and $j$ position, $t^{-1}$ in the $k=2n+1-j$ and $2n+1-i$ position. There are two $\gamma_2\ne \gamma_1$ with $\pair{\gamma_2,\gamma_1^\vee}=2$: i.e., $2(\sum_{l=i}^{n-1}\alpha_l)+\beta$ and $2(\sum_{l=j}^{n-1}\alpha_l)+\beta$. The second one has height $2(n-j)+1$ which is smaller than the height of $\gamma_1$. Thus $\gamma_2=2(\sum_{l=i}^{n-1}\alpha_l)+\beta$. This finishes the proof.
\end{proof}
For $\gamma\in \Sigma^+$, recall that we have an element $s_\gamma\in \bW$ which acts on $\Sigma^+$ by $s_\gamma(\gamma')=\gamma'-\pair{\gamma', \gamma^\vee }\gamma$.

Let $w_0=s_{2(\alpha_1+\dots+\alpha_{n-1})+\beta}$. It is not hard to check that $w_0=s_{\alpha_1}\dots s_{\alpha_{n-1}}s_\beta s_{\alpha_{n-1}}\dots s_{\alpha_1}$, which is a minimal expression of $w_0$. In general, we have $s_{2(\alpha_{i}+\dots+\alpha_{n-1})+\beta}=s_{\alpha_i}\dots s_{\alpha_{n-1}}s_\beta s_{\alpha_{n-1}}\dots s_{\alpha_i}$, which is also a minimal expression.

We will say that a Weyl element $w$ is in $M$, if it has a representative in $M$, i.e., $w$ does not involve $s_\beta$. 
\begin{lem} \label{lemma32}
If $w\in M$ and $\gamma\in N$, then $w(\gamma)\in N$, in particular $w(\gamma)>0$. 
\end{lem}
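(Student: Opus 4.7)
The plan is to use the explicit description of the root system in the standard basis. Define characters $e_i\colon T\to F^\times$ by $e_i\bigl(\diag(a_1,\dots,a_n,a_n^{-1},\dots,a_1^{-1})\bigr)=a_i$. Then the simple roots of $\Sp_{2n}$ satisfy $\alpha_i=e_i-e_{i+1}$ for $1\le i\le n-1$ and $\beta=2e_n$, so the positive roots split into two pieces:
\begin{equation*}
\Sigma^+_{U_M}=\{\,e_i-e_j:1\le i<j\le n\,\},\qquad \Sigma^+_{N}=\{\,e_i+e_j:1\le i\le j\le n\,\},
\end{equation*}
the first corresponding to root spaces in the Levi $M$ and the second to root spaces in $N$ (where $e_i+e_i=2e_i$ gives the roots on the skew diagonal, with $\beta=2e_n$).

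Next I would record how the generators $s_{\alpha_i}$ of the Weyl group of $M$ act on the characters $e_k$. From the formula $s_\gamma(\gamma')=\gamma'-\langle\gamma',\gamma^\vee\rangle\gamma$ applied to $\gamma=\alpha_i=e_i-e_{i+1}$, one checks immediately that $s_{\alpha_i}$ swaps $e_i$ and $e_{i+1}$ and fixes every other $e_k$. Consequently, every Weyl element $w$ in $M$, being a word in $s_{\alpha_1},\dots,s_{\alpha_{n-1}}$, acts on $\{e_1,\dots,e_n\}$ by a permutation $\sigma\in S_n$, that is $w(e_k)=e_{\sigma(k)}$ for all $k$.

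Given $\gamma\in N$, write $\gamma=e_i+e_j$ with $1\le i\le j\le n$. Then
\begin{equation*}
w(\gamma)=w(e_i)+w(e_j)=e_{\sigma(i)}+e_{\sigma(j)},
\end{equation*}
which, after possibly swapping the two indices, has the form $e_{i'}+e_{j'}$ with $1\le i'\le j'\le n$ and therefore lies in $\Sigma^+_N$. In particular $w(\gamma)>0$, completing the argument.

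The proof is entirely bookkeeping once the two-piece description of $\Sigma^+$ is in place; there is no serious obstacle. The only point to be careful about is the edge case $i=j$ (the root $2e_i$ on the skew diagonal), which is handled uniformly by the formula $w(2e_i)=2e_{\sigma(i)}$ and stays in $N$.
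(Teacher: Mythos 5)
Your argument is correct: the identification $\alpha_i=e_i-e_{i+1}$, $\beta=2e_n$ places the positive roots of $N$ exactly at $\{e_i+e_j: i\le j\}$, the generators $s_{\alpha_1},\dots,s_{\alpha_{n-1}}$ of the Weyl group of $M$ act by transpositions on $\{e_1,\dots,e_n\}$, and a permutation visibly preserves the set $\{e_i+e_j: i\le j\}$. This is a genuinely more explicit route than the paper's, which disposes of the lemma in one line by observing that $M$ normalizes $N$: a Weyl element of $M$ has a representative in $M$, conjugation by that representative preserves $N$ and hence permutes the root subgroups of $N$, so it maps roots of $N$ to roots of $N$, all of which are positive. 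The structural argument is shorter and requires no case analysis, while yours has the advantage of making the permutation action concrete; the latter is in fact the computation implicitly underlying several later steps of Section 3 (for instance the action of $s_{\alpha_k}$ on the roots $\beta_i$ on the skew diagonal in the proof of Proposition 3.3), so writing it out is not wasted effort, just more than this particular lemma needs.
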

\begin{proof}
This follows from the fact that $M$ normalizes $N$.
\end{proof}

\begin{prop}\label{prop33}
Given a bad pair of positive roots $(\gamma_1,\gamma_2)=(\alpha_{i}+\dots+\alpha_{j-1}+2(\alpha_j+\dots +\alpha_{n-1})+\beta, 2(\alpha_i+\dots+\alpha_{n-1})+\beta)$ for $1\le i< j\le n$ as in Lemma $\ref{lemma31}$. Assume $w\le w_0$, $w(\gamma_1)<0$ and $w(\gamma_2)<0$, then $w$ can be written as the form
$$w=w_1' s_{\alpha_{j-1}}s_{\alpha_{j}}\dots s_{\alpha_{n-1}} s_{\beta}s_{\alpha_{n-1}}\dots s_{\alpha_{i+1}}s_{\alpha_i} w_2', $$
with  $w_1'\le s_{\alpha_1}\dots s_{\alpha_{j-2}}$ and $w_2'\le s_{\alpha_{i-2}}s_{\alpha_{i-3}}\dots s_{\alpha_1} $. 
\end{prop}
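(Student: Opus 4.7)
The plan is to exploit the specific reduced expression $w_0 = s_{\alpha_1}\cdots s_{\alpha_{n-1}} s_\beta s_{\alpha_{n-1}}\cdots s_{\alpha_1}$. Since this contains $s_\beta$ exactly once, any subword either avoids $s_\beta$ entirely or has the shape $w_a s_\beta w_b$ with $w_a$ a subword of $s_{\alpha_1}\cdots s_{\alpha_{n-1}}$ and $w_b$ a subword of $s_{\alpha_{n-1}}\cdots s_{\alpha_1}$. In the first case $w$ lies in the Weyl group of $M$, so by Lemma \ref{lemma32} it preserves positivity of any root in $N$; but $\gamma_2\in N$ and $w(\gamma_2)<0$, contradiction. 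So the factorization $w = w_a s_\beta w_b$ holds, and what remains is to extract the internal structure of $w_a$ and $w_b$.

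The next step is to pass to the standard $C_n$ coordinates $\epsilon_1,\ldots,\epsilon_n$, under which $\gamma_2 = 2\epsilon_i$ and $\gamma_1 = \epsilon_i+\epsilon_j$. The Weyl group acts by signed permutations; here $w_a,w_b$ are sign-free (being in $\bW_M\cong S_n$) and $s_\beta$ only flips the sign of $\epsilon_n$. The condition $w(\gamma_2)<0$ therefore forces $w_b(\epsilon_i)=\epsilon_n$, since this is the only way $s_\beta$ can produce a sign change at $\epsilon_i$. To translate this into a subword statement I would use the cycle decomposition: a decreasing subword of $s_{\alpha_{n-1}}\cdots s_{\alpha_1}$ whose maximal contiguous block has indices $b,b-1,\ldots,a$ contributes the cycle $(a,\,b+1,\,b,\,\ldots,\,a+1)$, which sends $a\mapsto b+1$. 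The equality $w_b(\epsilon_i)=\epsilon_n$ forces one such cycle to have $a=i$ and $b=n-1$, and maximality of this block forces all remaining indices of $w_b$ to lie in $\{1,\ldots,i-2\}$. This gives $w_b = s_{\alpha_{n-1}}\cdots s_{\alpha_i}\cdot w_2'$ with $w_2'\le s_{\alpha_{i-2}}\cdots s_{\alpha_1}$.

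For $w_a$, I would first compute $w(\epsilon_i)=-w_a(\epsilon_n)$ and $w(\epsilon_j)=w_a(\epsilon_{j-1})$: the second uses that $w_b$ sends $\epsilon_j$ to $\epsilon_{j-1}$ via the cycle $(i,n,n-1,\ldots,i+1)$ and that $s_\beta$ fixes $\epsilon_{j-1}$ because $j-1<n$. Analogously to the $w_b$ analysis, an increasing subword of $s_{\alpha_1}\cdots s_{\alpha_{n-1}}$ with maximal block $[a,b]$ contributes the cycle $(a,a+1,\ldots,b+1)$. If $s_{\alpha_{n-1}}\notin w_a$, then $w_a(\epsilon_n)=\epsilon_n$ and $w_a(\epsilon_{j-1})=\epsilon_k$ with $k\le n-1$, giving $w(\gamma_1)=\epsilon_k-\epsilon_n$, which is positive, a contradiction. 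So the cycle through $\epsilon_n$ arises from a block $[a,n-1]$ and $w_a(\epsilon_n)=\epsilon_a$. The block $[a,n-1]$ permutes $\{\epsilon_a,\ldots,\epsilon_n\}$ while the remaining cycles act within $\{\epsilon_1,\ldots,\epsilon_{a-1}\}$. If $j-1<a$, then $w_a(\epsilon_{j-1})=\epsilon_k$ with $k\le a-1$, so $w(\gamma_1)=\epsilon_k-\epsilon_a$ is a positive root, contradicting $w(\gamma_1)<0$. Thus $a\le j-1$, and then $w_a(\epsilon_{j-1})=\epsilon_j$ yields $w(\gamma_1)=\epsilon_j-\epsilon_a<0$ as required.

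Finally, with $a\le j-1$ in hand, the block $[a,n-1]$ of $w_a$ decomposes as $[a,j-2]\cup[j-1,n-1]$, and the remaining indices of $w_a$ lie in $\{1,\ldots,a-2\}$. Collecting everything below position $j-1$ into $w_1'$ gives $w_a = w_1'\cdot s_{\alpha_{j-1}}s_{\alpha_j}\cdots s_{\alpha_{n-1}}$ with $w_1'$ a subword of $s_{\alpha_1}\cdots s_{\alpha_{j-2}}$, which together with the factorization of $w_b$ yields the desired expression for $w$. The main obstacle is the final case analysis for $w_a$: one must be precise about how the cycle containing $\epsilon_n$ must extend leftward all the way past position $j-1$, and handle the boundary cases $j=n$ and $i=1$ carefully so that the empty factors in the statement are interpreted correctly.
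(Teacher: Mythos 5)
Your proposal is correct, and its overall architecture coincides with the paper's: both start from the observation that the reduced word $w_0=s_{\alpha_1}\cdots s_{\alpha_{n-1}}s_\beta s_{\alpha_{n-1}}\cdots s_{\alpha_1}$ contains $s_\beta$ exactly once, so that $w\le w_0$ forces $w=w_as_\beta w_b$ (with the $s_\beta$-free case excluded by Lemma \ref{lemma32}), and both then pin down $w_b$ from $w(\gamma_2)<0$ and $w_a$ from $w(\gamma_1)<0$. Where you genuinely diverge is in how the individual steps are executed. The paper works entirely inside the root system: it computes the action of the $s_{\alpha_k}$ and $s_\beta$ on the skew-diagonal roots $\beta_i=2(\alpha_i+\cdots+\alpha_{n-1})+\beta$ (Eqs.\ (\ref{eq31})--(\ref{eq32})) and then runs through Claims 1--5, tracking the images of $\gamma_2$ and $\gamma_1$ under partial products. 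You instead pass to the $\epsilon$-coordinates, where $\gamma_2=2\epsilon_i$, $\gamma_1=\epsilon_i+\epsilon_j$, the Weyl group is the group of signed permutations, and subwords of the two staircase words decompose into commuting cycles indexed by maximal contiguous blocks. In that model the paper's Claims 1--3 collapse to the single observation that $w(\gamma_2)<0$ forces $w_b(\epsilon_i)=\epsilon_n$, hence that the block through $n-1$ in $w_b$ has left endpoint exactly $i$; and Claims 4--5 become the statement that $w(\gamma_1)=w_a(\epsilon_{j-1})-w_a(\epsilon_n)$ can only be negative if the block through $n-1$ in $w_a$ extends left at least to $j-1$. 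Your version is arguably more transparent and less error-prone (the positivity checks reduce to comparing subscripts of $\epsilon_k-\epsilon_l$), at the cost of importing the signed-permutation description of $\bW(C_n)$, which the paper avoids; the paper's version stays closer to the Chevalley-group formalism used elsewhere in Section 3. Your final remark about the boundary cases $i=1$ and $j=n$ is the right thing to watch, but in both cases the relevant factors are empty or a single letter and the argument goes through unchanged.
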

Here are some examples of $w$ with $w\le w_0, w(\gamma_1)<0$ and $w(\gamma_2)<0$.
\begin{enumerate}
\item Suppose that $n=3$, $i=1,j=2$, i.e., $(\gamma_1,\gamma_2)=( \alpha_1+2\alpha_2+\beta, 2(\alpha_1+\alpha_2)+\beta)$, then the only $w\le w_0$ which satisfies $w(\gamma_1)<0$ and $w(\gamma_2)<0$ is $w_0=s_{\alpha_1}s_{\alpha_2}s_\beta s_{\alpha_2}s_{\alpha_1}$ itself.
\item Suppose that $n=3$, $i=1,j=3$, i.e., $(\gamma_1,\gamma_2)=(\alpha_1+\alpha_2+\beta, 2(\alpha_1+\alpha_2)+\beta)$,  then $w=s_{\alpha_1}s_{\alpha_2}s_\beta s_{\alpha_2}s_{\alpha_1} =w_0$ or $w=s_{\alpha_2}s_\beta s_{\alpha_2}s_{\alpha_1}$. 
\item Suppose that $n=3$, $i=2,j=3$, i.e., $(\gamma_1, \gamma_2)=(\alpha_2+\beta,2\alpha_2+\beta)$, then $w=s_{\alpha_1}s_{\alpha_2}s_\beta s_{\alpha_2}$ or $s_{\alpha_2}s_\beta s_{\alpha_2}$.
%\item Suppose that $n=4$, $i=3,j=4$, then $w=w_1's_{\alpha_3}s_\beta s_{\alpha_3}w_2'$, with $w_1'\le s_{\alpha_1}s_{\alpha_2}$ and $w_2'\le s_{\alpha_1}$.
%\item Suppose that $n=6$, $i=3, j=4$, then $w=w_1's_{\alpha_3}s_{\alpha_4}s_{\alpha_5}s_\beta s_{\alpha_5}s_{\alpha_4}s_{\alpha_3}w_2'$, where $w_1'\le s_{\alpha_1}s_{\alpha_2}$ and $w_2'\le s_{\alpha_1}$.
\end{enumerate}

\begin{proof}

As a preparation, we compute the action of $s_{\alpha_k}$ ($1\le k\le n-1$) on the positive roots $\beta, 2\alpha_{n-1}+\beta,\dots, 2(\alpha_1+\dots+\alpha_{n-1})+\beta$ which lies in the skew diagonal of $N$. For simplicity, denote $\beta_i=2(\alpha_i+\dots +\alpha_{n-1})+\beta$ for $1\le i\le n-1$, and $\beta_n=\beta$ temporarily. We have 
$$\pair{\beta_i,\alpha_k^\vee}=\left\{\begin{array}{lll} 2, & i=k,\\ -2, & i=k+1,\\ 0, &\textrm{otherwise.} \end{array}\right.$$
Thus 
\begin{equation} \label{eq31} s_{\alpha_k}(\beta_i)=\beta_i-\pair{\beta_i,\alpha_k^\vee}\alpha_k=\left\{\begin{array}{lll} \beta_{i+1}, & i=k,\\ \beta_{i-1}, & i=k+1,\\ \beta_i, &\textrm{otherwise.} \end{array}\right. \end{equation}
In particular, $s_{\alpha_k}$ preserves the set $\wpair{\beta_i}_{1\le i \le n}.$ We also have 
\begin{equation}\label{eq32} s_\beta(\beta_i)=\beta_i, 1\le i \le n-1.\end{equation}

Now we start the proof. Take a $w\le w_0$ such that $w(\gamma_1)<0$ and $w(\gamma_2)<0$. 

First $w$ must involve $s_\beta$. In fact, if $w$ does not involve $s_\beta$, i.e., $w=s_{\alpha_{m_1}\dots s_{\alpha_{m_t}}}\in M$, then $w(\gamma_1)>0$ and $w(\gamma_2)>0$ from the above fact.

Since $w\le w_0$, we can assume that $w= s_{\alpha_{m_1}}\dots s_{\alpha_{m_t}}s_\beta s_{\alpha_{l_1}}\dots s_{\alpha_{l_k}}$ for $m_1<m_2>\dots <m_{t}, l_k<l_{k-1}\dots <l_1$. In our case, $\gamma_2=\beta_i$ for $1\le i\le n-1$. Suppose that $w(\gamma_1)<0$ and $w(\gamma_2)<0$. We will prove $w$ has the given form by the following claims. 

Claim 1: We have $ s_{\alpha_{l_1}}\dots s_{l_k}(\beta_i)=\beta_n=\beta$. 

By Eq.(\ref{eq31}), the expression $s_{\alpha_{l_1}}\dots s_{l_k}$ preserves the set $\wpair{\beta_i}_{1\le i\le n}$. Thus we can assume $ s_{\alpha_{l_1}}\dots s_{l_k}(\beta_i)=\beta_p$ for some $p$ with $1\le p\le n$. If $p\ne n$, then $ s_\beta s_{\alpha_{l_1}}\dots s_{l_k}(\beta_i)=s_{\beta}(\beta_p)=\beta_p$ by Eq.(\ref{eq32}), and thus $w(\beta_i)=s_{\alpha_{m_1}}\dots s_{\alpha_{m_t}}(\beta_p)>0$ by Lemma \ref{lemma32}. Contradiction. This proves Claim 1.

Claim 2: If $i>1$, then the expression $s_{\alpha_{l_1}}\dots s_{l_k}$ does not involve $s_{\alpha_{i-1}}$. 

By contradiction, we assume that $s_{\alpha_{l_1}}\dots s_{l_k}=s_{\alpha_{l_1}}\dots s_{\alpha_{l_{p}}}s_{\alpha_{i-1}}s_{\alpha_{l_{p+2}}}\dots s_{\alpha_{l_k}} $, with $l_1>\dots l_p> i-1>l_{p+2}>\dots >l_k$. By Eq.(\ref{eq31}), we have 
\begin{align*}
&s_{\alpha_{l_1}}\dots s_{\alpha_{l_{p}}}s_{\alpha_{i-1}}s_{\alpha_{l_{p+2}}}\dots s_{\alpha_{l_k}} (\beta_i)\\
=&s_{\alpha_{l_1}}\dots s_{\alpha_{l_{p}}}s_{\alpha_{i-1}}(\beta_i)\\
=&s_{\alpha_{l_1}}\dots s_{\alpha_{l_{p}}}(\beta_{i-1})\\
=& \beta_{i-1}\ne \beta_n. 
\end{align*}
 This contradicts Claim 1. This proves Claim 2.

Claim 3: The expression $s_{\alpha_{n-1}}\dots s_{\alpha_i}$ must be a sub-expression of $ s_{\alpha_{l_1}}\dots s_{l_k}$.

Write  $ s_{\alpha_{l_1}}\dots s_{l_k}(\beta_i)=s_{\alpha_{l_1}}\dots s_{\alpha_{l_p}} s_{\alpha_{l_{p+1}}}\dots s_{\alpha_{l_k}}$ for $l_1> \dots > l_p\ge i >i-1> l_{p+1}>\dots >l_k$. We have 
$$ s_{\alpha_{l_1}}\dots s_{l_k}(\beta_i)=s_{\alpha_{l_1}}\dots s_{l_p}(\beta_i).$$
If $l_p>i$, then $s_{\alpha_{l_1}}\dots s_{l_p}(\beta_i)=\beta_i $ from the above calculation. Thus $l_p=i$. By induction, we have $ s_{\alpha_{l_1}}\dots s_{l_p}=s_{\alpha_{n-1}}\dots s_{\alpha_{i+1}}s_{\alpha_i}$. This proves Claim 3. 

From the above 3 claims, we get $w=s_{\alpha_{m_1}}\dots s_{\alpha_{m_t}}s_\beta s_{\alpha_{n-1}}\dots s_{\alpha_{i+1}}s_{\alpha_i} w_2'$ for some $w_2'\le s_{\alpha_{i-2}}\dots s_{\alpha_1}$. We consider $s_{\alpha_{n-1}}\dots s_{\alpha_{i+1}}s_{\alpha_i} w_2'(\gamma_1)$. 

Claim 4: We have $s_\beta s_{\alpha_{n-1}}\dots s_{\alpha_{i+1}}s_{\alpha_i} w_2'(\gamma_1)=\alpha_{j-1}+\alpha_j+\dots +\alpha_{n-2}+\alpha_{n-1}$.

We have $s_{\alpha_k}(\alpha_{k-1})=\alpha_{k-1}+\alpha_k$ and $s_{\alpha_k}(\alpha_{k+1})=\alpha_k+\alpha_{k+1}$. Thus
\begin{align*}
& s_{\alpha_{n-1}}\dots s_{\alpha_{i+1}}s_{\alpha_i} w_2'(\alpha_i+\alpha_{i+1}+\dots \alpha_{j-1})\\
=&s_{\alpha_{n-1}}\dots s_{\alpha_{i+1}}s_{\alpha_i}(\alpha_i+\alpha_{i+1}+\dots \alpha_{j-1})\\
=&s_{\alpha_{n-1}}\dots s_{\alpha_{i+1}}(\alpha_{i+1}+\dots \alpha_{j-1})\\
=&\dots\\
=& s_{\alpha_{n-1}}\dots s_{\alpha_{j-1}}(\alpha_{j-1})\\
=&s_{\alpha_{n-1}}\dots s_{\alpha_{j}}(-\alpha_{j-1})\\
=&-(\alpha_{j-1}+\dots +\alpha_{n-2} +\alpha_{n-1}).
\end{align*}
From Claim 1, we have $ s_{\alpha_{n-1}}\dots s_{\alpha_{i+1}}s_{\alpha_i} w_2'(\gamma_2)=\beta$. Thus
\begin{align*}
 &s_{\alpha_{n-1}}\dots s_{\alpha_{i+1}}s_{\alpha_i} w_2'(\gamma_1)\\
=& s_{\alpha_{n-1}}\dots s_{\alpha_{i+1}}s_{\alpha_i} w_2'(\gamma_2-(\alpha_i+\dots \alpha_{j-1}))\\
=& \beta +\alpha_{j-1}+\dots +\alpha_{n-2}+\alpha_{n-1}. 
\end{align*}
Since $s_\beta$ preserves $\alpha_k$ for $k\le n-2$ and $s_\beta(\alpha_{n-1})=\alpha_{n-1}+\beta$, we get
\begin{align*}
& s_\beta s_{\alpha_{n-1}}\dots s_{\alpha_{i+1}}s_{\alpha_i} w_2'(\gamma_1)\\
=&s_\beta( \beta +\alpha_{j-1}+\dots +\alpha_{n-2}+\alpha_{n-1})\\
=& \alpha_{j-1}+\dots +\alpha_{n-1}.
\end{align*}
This proves Claim 4. 

Claim 5: The expression $s_{\alpha_{j-1}}s_{\alpha_j}\dots s_{\alpha_{n-1}}$ must be a sub-expression of $s_{\alpha_{m_1} }\dots s_{\alpha_{m_t}}$.

 By Claim 4, $w(\gamma_1)=s_{\alpha_{m_1} }\dots s_{\alpha_{m_t}}(\alpha_{j-1}+\dots + \alpha_{n-1}) $. If $m_t\ne n-1$, then $ s_{\alpha_{m_1} }\dots s_{\alpha_{m_t}}(\alpha_{j-1}+\dots + \alpha_{n-1})$ must be a sum of $\alpha_{n-1}$ with another root, which cannot be negative. Thus $m_t=n-1$. By induction, we get Claim 5. 

Now the proposition follows from the above Claims.
\end{proof}

We call a tripe $(\gamma_1,\gamma_2,w)$ a bad tripe, if $(\gamma_1,\gamma_2)$ is a bad pair and $w\in \bW$ such that $w(\gamma_1)<0$ and $w(\gamma_2)<0$.

\begin{lem}\label{lemma34}
For $\gamma_1,\gamma_2\in \Sigma^+$ with $\gamma_1\ne \gamma_2$, and $\rht(\gamma_1)\le \rht(\gamma_2)$. If $(\gamma_1,\gamma_2)$ is not a bad pair as in Lemma $\ref{lemma31}$, then $s_{\gamma_1}(\gamma_2)=\gamma_2-\pair{\gamma_2, \gamma_1^\vee}\gamma_1\in \Sigma^+$.
\end{lem}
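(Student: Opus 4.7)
The plan is to do a case analysis on the Cartan integer $c := \langle \gamma_2, \gamma_1^\vee\rangle$. In type $C_n$, the possible values of $c$ are $\{-2,-1,0,1,2\}$. In every case, $s_{\gamma_1}(\gamma_2) = \gamma_2 - c\gamma_1$ is automatically a root because reflections preserve the root system; the only question is whether it is a \emph{positive} root. Since every nonzero root has nonzero height and all simple-root coefficients of the same sign, positivity is equivalent to $\rht(\gamma_2 - c\gamma_1) > 0$.

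If $c \le 0$, then $\gamma_2 - c\gamma_1 = \gamma_2 + |c|\gamma_1$ is a nonnegative combination of positive roots with the coefficient of $\gamma_2$ positive, so it has positive height and is therefore a positive root. If $c = 1$, then $s_{\gamma_1}(\gamma_2) = \gamma_2 - \gamma_1$ has height $\rht(\gamma_2) - \rht(\gamma_1) \ge 0$ by the hypothesis $\rht(\gamma_1) \le \rht(\gamma_2)$. Moreover it is nonzero since $\gamma_1 \ne \gamma_2$, and as a nonzero root it cannot have height $0$; this forces $\rht(\gamma_2) > \rht(\gamma_1)$, hence $s_{\gamma_1}(\gamma_2) > 0$.

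The only remaining case is $c = 2$, where $s_{\gamma_1}(\gamma_2) = \gamma_2 - 2\gamma_1$ is a root of height $\rht(\gamma_2) - 2\rht(\gamma_1)$. For this root to be negative one would need $2\rht(\gamma_1) > \rht(\gamma_2)$ (strict, because the root is nonzero and cannot have height $0$), i.e.\ $\rht(\gamma_1) > \tfrac{1}{2}\rht(\gamma_2)$. Together with the standing assumption $\rht(\gamma_1) \le \rht(\gamma_2)$ and $c = \langle \gamma_2, \gamma_1^\vee\rangle = 2$, this is exactly the defining condition of a bad pair in Lemma \ref{lemma31}. Since we assume $(\gamma_1,\gamma_2)$ is not a bad pair, we must have $\rht(\gamma_1) < \tfrac{1}{2}\rht(\gamma_2)$, hence $\rht(s_{\gamma_1}(\gamma_2)) > 0$ and $s_{\gamma_1}(\gamma_2) \in \Sigma^+$.

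There is really no main obstacle here; the work has already been done in Lemma \ref{lemma31}, which was set up precisely to isolate the unique configuration in which a reflection by a shorter root sends a taller root to a negative root. The proof of Lemma \ref{lemma34} is thus a short bookkeeping exercise distinguishing the five possible values of $\langle\gamma_2,\gamma_1^\vee\rangle$ and invoking Lemma \ref{lemma31} in the one case ($c=2$) where the sign of $\gamma_2 - 2\gamma_1$ is not forced by the hypotheses alone.
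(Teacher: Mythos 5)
Your proof is correct and follows essentially the same route as the paper: a case split on the Cartan integer $\pair{\gamma_2,\gamma_1^\vee}\in\{0,\pm1,\pm2\}$, using the sign of the height to decide positivity, with the non-bad-pair hypothesis invoked only in the case $\pair{\gamma_2,\gamma_1^\vee}=2$. The only cosmetic difference is that the paper deduces $\rht(\gamma_1)\le\tfrac12\rht(\gamma_2)$ (not strict) from the negation of the bad-pair condition and then rules out height zero because the reflected root is nonzero, whereas you fold those two steps together.
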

\begin{proof}
We have $ \pair{\gamma_2, \gamma_1^\vee}=0,\pm 1, \pm 2$. If $\pair{\gamma_2, \gamma_1^\vee}\le 0$, the assertion is clear. If $\pair{\gamma_2, \gamma_1^\vee}=1$, then $\rht(s_{\gamma_1}(\gamma_2))=\rht(\gamma_2)-\rht(\gamma_1)\ge 0$, and thus $s_{\gamma_1}(\gamma_2)>0$. In fact, if we had $s_{\gamma_1}(\gamma_2)<0$, we would have $\rht(s_{\gamma_1}(\gamma_2))<0$.  Now suppose that $\pair{\gamma_2, \gamma_1^\vee}=2$. Since the pair $(\gamma_1,\gamma_2)$ is not bad, we get $\rht(\gamma_1)\le \frac{1}{2}\rht(\gamma_2)$, and thus $$\rht(s_{\gamma_1}(\gamma_2))=\rht(\gamma_2)-2\rht(\gamma_1)\ge 0.$$
The same argument as above shows that $s_{\gamma_1}(\gamma_2)>0$.
\end{proof}

\begin{lem}\label{lemma35}
Given a bad tripe $(\gamma_1,\gamma_2,w)$. We assume that $(\gamma_1,\gamma_2)=(\sum_{t=i}^{j-1}\alpha_t+ 2\sum_{t=j}^{n-1} \alpha_t +\beta, 2\sum_{t=i}^{n-1}\alpha_t+\beta)$ with $1\le i <j \le n$, and $w=w_1'\sigma w_2'$ with $w_1'\le s_{\alpha_1}\dots s_{\alpha_{j-2}}$, $w_2'\le s_{\alpha_{i-2}}\dots s_{\alpha_1}$ and $\sigma =s_{\alpha_{j-1}}\dots s_{\alpha_{n-1}}s_\beta s_{\alpha_{n-1}}\dots s_{\alpha_i}$. See Lemma $\ref{lemma31} $ and Proposition $\ref{prop33}$.
\begin{enumerate}
\item Given $\gamma\in \Sigma^+$ such that $\rht(\gamma_1)\le \rht(\gamma)<\rht(\gamma_2)$. If $s_{\gamma_2}(\gamma)<0$, then there exists an integer $p$ with $i< p\le j$ such that 
$$\gamma=\sum_{t=i}^{p-1} \alpha_t +2\sum_{t=p}^{n-1}\alpha_t +\beta.$$
Moreover, for such a $\gamma$, if $\sigma(\gamma)<0$, then $\gamma=\gamma_1$.
\item Let $\gamma$ be a positive root such that $\rht(\gamma)\ge \rht(\gamma_1)$ and $s_{\gamma_2}(\gamma)>0$, then $\sigma(\gamma)>0$.
\end{enumerate}
\end{lem}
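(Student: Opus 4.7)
The plan is to translate into the standard orthonormal realization: with simple roots $\alpha_k = e_k - e_{k+1}$ and $\beta = 2 e_n$, we have $\gamma_1 = e_i + e_j$ (short) and $\gamma_2 = 2 e_i$ (long). The coroot pairing $\pair{\gamma, \gamma_2^\vee}$ equals the $e_i$-coefficient $c_i(\gamma)$, so $s_{\gamma_2}(\gamma) = \gamma - 2 c_i(\gamma) e_i$. For part (1), a direct enumeration over the positive roots $e_p - e_q\ (p<q)$, $e_p + e_q\ (p \le q)$, and $2e_q$ shows that $s_{\gamma_2}(\gamma) < 0$ for positive $\gamma \ne \gamma_2$ iff $\gamma = e_i + e_q$ or $e_i - e_q$ with $q > i$. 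The lower bound $\rht(\gamma) \ge \rht(\gamma_1) = 2n - i - j + 1$ rules out $e_i - e_q$ (of height $q - i \le n - i < \rht(\gamma_1)$, using $j \le n$), while for $\gamma = e_i + e_q$ the conditions $\rht(\gamma_1) \le \rht(\gamma) < \rht(\gamma_2)$ become $i < q \le j$; setting $p = q$ and expressing in simple roots recovers the claimed form.

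For the ``moreover'' part of (1) and for (2), I will compute $\sigma$ explicitly as a signed permutation of $\{e_1, \ldots, e_n\}$. Writing $\sigma = B \cdot s_\beta \cdot A$ with $A = s_{\alpha_{n-1}} \cdots s_{\alpha_i}$ and $B = s_{\alpha_{j-1}} \cdots s_{\alpha_{n-1}}$, a short induction using $s_{\alpha_k}: e_k \leftrightarrow e_{k+1}$ shows that $A$ realizes the cycle $e_i \mapsto e_n$, $e_\ell \mapsto e_{\ell-1}$ for $i < \ell \le n$, while $B$ realizes the reverse cycle $e_{j-1} \mapsto e_j \mapsto \cdots \mapsto e_n \mapsto e_{j-1}$. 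Combined with $s_\beta: e_n \mapsto -e_n$, this yields
\[
\sigma(e_\ell) = \begin{cases} e_\ell & \text{if } \ell < i \text{ or } j \le \ell \le n, \\ -e_{j-1} & \text{if } \ell = i, \\ e_{\ell-1} & \text{if } i < \ell \le j-1. \end{cases}
\]
Applied to $\gamma = e_i + e_p$ with $i < p \le j$, this gives $\sigma(\gamma) = e_{p-1} - e_{j-1}$ (a positive root, since $p - 1 < j - 1$) when $p < j$, and $\sigma(\gamma) = e_j - e_{j-1} = -\alpha_{j-1}$ (negative) when $p = j$. Hence $\sigma(\gamma) < 0$ iff $\gamma = \gamma_1$, completing (1).

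For (2), the condition $s_{\gamma_2}(\gamma) > 0$ on a positive $\gamma$ is equivalent to $c_i(\gamma) \le 0$, i.e., $e_i$ appears in $\gamma$ with coefficient $0$ or with coefficient $-1$ (the latter only when $\gamma = e_p - e_i$ with $p < i$). For $k \ne i$, the formula above reads $\sigma(e_k) = e_{f(k)}$ with $f(k) = k$ for $k < i$ or $k \ge j$ and $f(k) = k - 1$ for $i < k < j$; a quick check against the ranges shows $f$ is order-preserving on $\{1, \ldots, n\} \setminus \{i\}$. Consequently, when $c_i(\gamma) = 0$, $\sigma(\gamma)$ stays a positive root of the same type ($e_p - e_q$, $e_p + e_q$, or $2e_q$) as $\gamma$; when $\gamma = e_p - e_i$ with $p < i$, one has $\sigma(\gamma) = e_p + e_{j-1}$, still positive since $p < i \le j - 1$. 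The main obstacle is just the careful identification of $\sigma$ as an explicit signed permutation of $\{e_1, \ldots, e_n\}$; once that is pinned down, both (1) and (2) reduce to mechanical case-checking on root types.
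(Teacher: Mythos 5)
Your overall strategy --- passing to the orthonormal coordinates $e_1,\dots,e_n$, identifying $\pair{\gamma,\gamma_2^\vee}$ with the coefficient $c_i(\gamma)$, and computing $\sigma$ as an explicit signed permutation --- is sound, and your treatment of part (1), including the ``moreover'' clause, is correct and matches what the paper does (the paper instead factors $\sigma=s_{\alpha_{j-2}}\cdots s_{\alpha_i}\,s_{\gamma_2}$ and tracks root spaces by matrix position, but the computations are equivalent; your $\sigma(e_i+e_p)=e_{p-1}-e_{j-1}$ for $p<j$ and $=-\alpha_{j-1}$ for $p=j$ agrees with the paper's Claim~4 in Proposition~3.3).

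There is, however, a genuine gap in your part (2): the asserted equivalence ``$s_{\gamma_2}(\gamma)>0$ iff $c_i(\gamma)\le 0$'' is false. The positive roots $\gamma=e_q+e_i$ with $q<i$ have $c_i(\gamma)=1$, yet $s_{\gamma_2}(\gamma)=(e_q+e_i)-2e_i=e_q-e_i>0$; moreover such $\gamma$ satisfy the height hypothesis of (2), since $\rht(e_q+e_i)=2n-q-i+1>2n-i-j+1=\rht(\gamma_1)$. So whenever $i\ge 2$ your case analysis omits roots that are genuinely in the scope of the statement. The omission is harmless to the truth of the lemma and is repaired by your own formula for $\sigma$: one gets $\sigma(e_q+e_i)=e_q-e_{j-1}$, which is a positive root because $q<i\le j-1$. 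You should add this third case explicitly (and correct the ``equivalent to $c_i(\gamma)\le 0$'' sentence to an enumeration of the three possibilities $c_i(\gamma)=0$, $\gamma=e_q-e_i$ with $q<i$, and $\gamma=e_q+e_i$ with $q<i$). For comparison, the paper's proof of (2) avoids this enumeration altogether: it argues by contradiction, showing that if $\xi=s_{\gamma_2}(\gamma)>0$ but $\sigma(\gamma)<0$ then $\xi$ lies in $\Sigma^-_{s_{\alpha_{j-2}}\cdots s_{\alpha_i}}=\{\alpha_i+\cdots+\alpha_p: i\le p\le j-2\}$, whose elements have height $<\rht(\gamma_1)$, and then rules out each value of $\pair{\gamma,\gamma_2^\vee}$; either route works once your missing case is restored.
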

\begin{proof}
(1) By the formular $s_{\gamma_2}(\gamma)=\gamma-\pair{\gamma,\gamma_2^\vee}\gamma_2$, we need to consider the pair $\pair{\gamma,\gamma_2^\vee}$. We have $\gamma_2^\vee(t)=\diag(1,\dots,1,t,1,\dots,1,t^{-1},1,\dots,1)$, where $t$ is in the $i$ position and $t^{-1}$ in the $2n+1-i$ position. For $\gamma\in U_M$, suppose that $\gamma$ is in the $(k,l)$ position, with $1\le k <l \le n$. Since $n-k\ge l-k=\rht(\gamma)\ge \rht(\gamma_1)=2n-i-j+1\ge n-i+1$, we get $k\le i-1$. Thus we have $\pair{\gamma, \gamma_2^\vee}=0$ or $-1$, and hence $s_{\gamma_2}(\gamma)>0$.

Next, we consider $\gamma\in N$. Then it is easy to see that $\pair{\gamma,\gamma_2^\vee}=0,\pm 1$. If $\pair{\gamma,\gamma_2^\vee}=0$ or $-1$, then $s_{\gamma_2}(\gamma)>0$. Thus we need to consider the $\gamma$ with $\pair{\gamma,\gamma_2^\vee}=1$. The root space of such $\gamma$ must be in the $i$-th row or $(2n+1-i)$-th column. In the latter case, we have $\rht(\gamma)\ge \rht(\gamma_2)$. In the former case, by the height condition of $\gamma$, we can choose a $p$ with $i<p\le j$ such that 
$$\gamma=\sum_{t=i}^{p-1} \alpha_t +2\sum_{t=p}^{n-1}\alpha_t +\beta.$$
This proves the first assertion of (1). To prove the moreover part, notice that we can write $\sigma=s_{\alpha_{j-2}}\dots s_{\alpha_i}s_{\gamma_2}$. Given a $\gamma=\sum_{t=i}^{p-1} \alpha_t +2\sum_{t=p}^{n-1}\alpha_t +\beta$ with $i<p\le j$, we have $s_{\gamma_2}(\gamma)=\gamma-\gamma_2=-(\alpha_i+\dots \alpha_{p-1})$. If $p<j$, or equivalently, $p-1\le j-2$, we have $\sigma(\gamma)=-s_{\alpha_{j-2}}\dots s_{\alpha_i}(\alpha_i+\dots \alpha_{p-1})>0$. This proves the moreover part of (1). 

(2) Suppose that there is a $\gamma$ such that $s_{\gamma_2}(\gamma)>0$ but $\sigma(\gamma)<0$. Denote $\xi=s_{\gamma_2}(\gamma)\in \Sigma^+$. Then $\sigma(\gamma)=s_{\alpha_{j-2}}\dots s_{\alpha_i}(\xi)<0$. Thus $\xi\in \Sigma^-_{s_{\alpha_{j-2}} \dots s_{\alpha_i}}$. It is not hard to check that $$ \Sigma^-_{s_{\alpha_{j-2}}\dots s_{\alpha_i}}=\wpair{\sum_{t=i}^{p}\alpha_t, i\le p \le j-2}.$$
Thus we can suppose that $\xi=\sum_{t=i}^p \alpha_t$ for some $p$ with $i\le p\le j-2$. Note that $\rht(\xi)=p+1-i<\rht(\gamma_1)$. By the definition of $\xi$, we have 
$$\xi=s_{\gamma_2}(\gamma)=\gamma-\pair{\gamma,\gamma_2^\vee}\gamma_2.$$
If $\pair{\gamma,\gamma_2^\vee}<0$, then $\xi=\gamma+\gamma_2$ or $\xi=\gamma+2\gamma_2$, which contradicts to $\rht(\xi)<\rht(\gamma_1)$. If $\pair{\gamma,\gamma_2^\vee}=0$, then $\gamma=\xi$, and thus $\rht(\gamma)<\rht(\gamma_1)$. If $\pair{\gamma,\gamma_2}>0$, then $\gamma=\gamma_2+\xi$ or $\gamma=2\gamma_2+\xi$. Note that neither $\gamma_2+\xi$ nor $2\gamma_2+\xi$ is a root because it contains $3\alpha_i$. This proves (2).
\end{proof}

\begin{prop}{\label{prop36}}
Let $w\in \bW$ and $w\le w_0$. Let $\xi_1,\dots, \xi_k \in \Sigma_w^-$ ordered by $\rht(\xi_t)\le \rht(\xi_{t+1})$. 
\begin{enumerate}
\item Suppose that there is no $l$ with $2\le l \le k$ such that $(\xi_1,\xi_l)$ is a bad pair as described in Lemma $\ref{lemma31}$, i.e., $\frac{1}{2}\rht(\beta_l)<\rht(\beta_1)\le \rht(\beta_l)$, then for all $t\in T, r_i\in F,r_1\ne 0$, we have
$$g:=tw\bx_{\xi_k}(r_k) \dots \bx_{\xi_1}(r_1)\bx_{-\xi_1}(-r_1^{-1})\in Bw'B$$
for some $w'$ with $w'<w$. 
\item Suppose that there exists an $l$ with $2\le l \le k$ such that $(\xi_1,\xi_l)$ is a bad pair as described in Lemma $\ref{lemma31}$, we have
$$g:=tw \bx_{\xi_k}(r_k)\dots \hat\bx_{\xi_l}(r_l)\dots \bx_{\xi_1}(r_1) \bx_{\xi_l}(r_l)\bx_{-\xi_l}(-r_l^{-1})\in Bw'B,$$
for some $w'<w$, where $\hat \bx_{\xi_l}(r_l)$ means the term $\bx_{\xi_l}(r_l)$ is omitted.
\end{enumerate}
\end{prop}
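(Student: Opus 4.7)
My plan is to prove both parts using the $\SL_2$ identity
\[
\bx_\gamma(r)\, \bx_{-\gamma}(-r^{-1}) \;=\; h_\gamma(r)\, s_\gamma\, \bx_\gamma(-r), \qquad r \in F^\times,
\]
attached to the root $\gamma$ (here $h_\gamma(r) \in T$). The idea is to extract a reflection $s_\gamma$ from a $\bx_\gamma \cdot \bx_{-\gamma}$ block, push it leftward past the remaining $\bx_{\xi_t}$ factors via the conjugation rule $\bx_{\xi_t}(r_t)\, s_\gamma = s_\gamma\, \bx_{s_\gamma(\xi_t)}(\pm r_t)$, and then read off the Bruhat cell of $g$.

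For part (1), apply the identity with $\gamma = \xi_1$ at the right end of the product. Absorbing $h_{\xi_1}(r_1)$ into the torus on the far left leaves the task of moving $s_{\xi_1}$ past the factors $\bx_{\xi_t}(r_t)$ for $t \geq 2$. Since $\rht(\xi_1) \leq \rht(\xi_t)$ and $(\xi_1, \xi_t)$ is not a bad pair by hypothesis, Lemma \ref{lemma34} gives $s_{\xi_1}(\xi_t) \in \Sigma^+$, so each commutation stays inside $U^+$. Collecting everything yields $g = t'\,(w s_{\xi_1})\,u$ with $t' \in T$ and $u \in U^+$, whence $g \in B\,(w s_{\xi_1})\,B$. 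Because $\xi_1 \in \Sigma_w^-$, i.e.\ $w(\xi_1) < 0$, the element $w' := w s_{\xi_1}$ satisfies $w' < w$ in the Bruhat order.

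For part (2), apply the identity instead with $\gamma = \xi_l$, writing $u_+ := \bx_{\xi_k}(r_k) \cdots \hat{\bx}_{\xi_l}(r_l) \cdots \bx_{\xi_1}(r_1) \in U^+$. After absorbing $h_{\xi_l}(r_l)$ into the torus, I push $s_{\xi_l}$ leftward through $u_+$. The safe factors with $s_{\xi_l}(\xi_t) \in \Sigma^+$ pass into $U^+$, but by Lemma \ref{lemma35}(1) the unsafe ones are precisely the positive roots $\xi_t = \sum_{s=i}^{p-1}\alpha_s + 2\sum_{s=p}^{n-1}\alpha_s + \beta$ for $i < p \leq j$, and the moreover clause of that lemma asserts $\sigma(\xi_t) > 0$ for every such $\xi_t$ other than $\gamma_1 = \xi_1$. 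To handle the resulting $U^-$ contributions, I invoke Proposition \ref{prop33} to factor $w = w_1' \sigma w_2'$ with $\sigma = s_{\alpha_{j-1}} \cdots s_{\alpha_{n-1}} s_\beta s_{\alpha_{n-1}} \cdots s_{\alpha_i}$: for unsafe $\xi_t \neq \gamma_1$, the positivity $\sigma(\xi_t) > 0$ together with Lemma \ref{lemma32} (applied to $w_1', w_2' \in M$) lets the corresponding negative-root factor be conjugated through $w$ back into $U^+$ without changing the Weyl part; for the exceptional $\xi_t = \gamma_1$, a direct rank-two computation shows that the resulting $U^-$ factor annihilates exactly the $s_\beta$ block inside $\sigma$, replacing $\sigma$ by a strictly shorter sub-expression $\sigma'$. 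Hence $g \in B w' B$ with $w' = w_1' \sigma' w_2' < w$.

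The main obstacle lies in part (2), specifically in verifying that the $U^-$ contribution from $\gamma_1$ precisely cancels the $s_\beta$ inside $\sigma$. This reduces to a direct Chevalley-group calculation in the rank-two subsystem spanned by $\gamma_1$ and $\gamma_2$, using the commutation relations in $\Sp_{2n}$ together with the explicit structure of $\sigma$ from Proposition \ref{prop33} and the positivity statements of Lemma \ref{lemma35}.
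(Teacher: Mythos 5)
Your part (1) is correct and coincides with the paper's argument: extract $s_{\xi_1}$ from the block $\bx_{\xi_1}(r_1)\bx_{-\xi_1}(-r_1^{-1})$ via the rank-one identity, use Lemma \ref{lemma34} to see that every remaining factor conjugates into a positive root group, and conclude $g\in Bws_{\xi_1}B$ with $ws_{\xi_1}<w$ since $w(\xi_1)<0$.

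Part (2) is where the substance lies, and your sketch has two genuine gaps. After you push $s_{\xi_l}=s_{\gamma_2}$ leftward through $u_+$, the unsafe factors become $\bx_{s_{\gamma_2}(\xi_t)}$ with $s_{\gamma_2}(\xi_t)=\xi_t-\gamma_2=-(\alpha_i+\cdots+\alpha_{p-1})$, the negative of a root of $U_M$-type. Your plan to move these ``through $w$ back into $U^+$'' by appealing to Lemma \ref{lemma32} fails: that lemma only concerns roots lying in $N$, and the Weyl elements $w_1',w_2'\in M$ need not preserve positivity of $U_M$-roots (for instance $w_1'$ may contain $s_{\alpha_{j-2}}$ while the root to be moved is $\alpha_{j-2}$ itself). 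The positivity $\sigma(\xi_t)>0$ from Lemma \ref{lemma35} is a statement about conjugation by $\sigma$, not by $s_{\gamma_2}$, and nothing in your setup makes it applicable. Second, the ``direct rank-two computation'' for the exceptional factor $\xi_1=\gamma_1$ is exactly the crux of the proposition and is left undone; its predicted outcome is also off, since deleting $s_\beta$ from $\sigma$ collapses the expression to $s_{\alpha_{j-2}}\cdots s_{\alpha_i}$, whereas the cell one actually lands in is governed by $s_{\alpha_{j-1}}s_{\alpha_{j-2}}\cdots s_{\alpha_i}$ (both are $<\sigma$, so the final conclusion would survive, but it shows the computation was not carried through).

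The idea missing from your sketch is to conjugate all of $u_+$ by $\sigma$ at once rather than by $s_{\gamma_2}$. Writing $w=w_1'w_2'\sigma$ (using that $w_2'$ commutes with $\sigma$) and $\bx_{\xi_l}(r_l)\bx_{-\xi_l}(-r_l^{-1})=s_{\xi_l}b=\sigma^{-1}s_{\alpha_{j-2}}\cdots s_{\alpha_i}b$ with $b\in B$, one obtains
$$g=tw_1'w_2'\,\bx_{\sigma(\xi_k)}(r_k)\cdots\widehat{\bx_{\sigma(\xi_l)}(r_l)}\cdots\bx_{\sigma(\xi_1)}(r_1)\,s_{\alpha_{j-2}}\cdots s_{\alpha_i}\,b.$$
By Lemmas \ref{lemma34} and \ref{lemma35} every $\sigma(\xi_t)$ with $t>1$ is positive, while $\sigma(\xi_1)=-\alpha_{j-1}$ is the negative of a \emph{simple} root; then $U_{-\alpha_{j-1}}\subset Bs_{\alpha_{j-1}}B$ together with the Bruhat-cell product rule $BwBs_\gamma B\subset BwB\cup Bws_\gamma B$ places $g$ in $Bw'B$ with $w'=w_1's_{\alpha_{j-1}}s_{\alpha_{j-2}}\cdots s_{\alpha_i}w_2'<w$. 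You would need to supply this (or an equivalent) mechanism to close part (2).
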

\begin{proof}
For any $\gamma\in \Sigma^+$ and $r\in F^\times$, we have 
\begin{equation}\label{eq33} \bx_{\gamma}(r)\bx_{-\gamma}(-r^{-1})\in s_\gamma B.\end{equation}
This follows from a standard Chevalley relation, see \cite{ St}. We will also use the following fact on Bruhat order: given $w',w\in \bW$, then
\begin{enumerate}
\item[$(*)$] $w'<w$ if and only if there exists positive roots $ \xi_1,\dots, \xi_k$ such that $w'=ws_{\xi_1}\dots s_{\xi_k}$ and $ws_{\xi_1}\dots s_{\xi_i}(\xi_{i+1})$ is negative for all $i$ with $1\le i\le k-1$.
\end{enumerate}
For a proof of this fact, see \cite{Hu} for example.

(1) We have $ \bx_{\xi_1}(r_1)\bx_{-\xi_1}(-r_1^{-1})\in s_{\xi_1} B$ from the above Chevalley relation, Eq.(\ref{eq33}). By Lemma \ref{lemma34}, we have $s_{\xi_1}(\xi_t)>0$ for all $t$ with $2\le t\le k$. Thus
$$ tw\bx_{\xi_k}(r_k) \dots \bx_{\xi_1}(r_1)\bx_{-\xi_1}(-r_1^{-1})\in tws_{\xi_1}U_{s_{\xi_1}(\xi_k)} \dots U_{s_{\xi_1}(\beta_2)}B\subset Bw'B.$$
where $w'=ws_{\xi_1}$.  Since $w(\xi_1)<0$, we have $w'<w$, by the fact $(*)$.

(2) We suppose that $(\xi_1,\xi_l)=(\alpha_i+\dots+\alpha_{j-1}+2(\alpha_{j}+\dots+\alpha_{n-1})+\beta, 2(\alpha_i+\dots+\alpha_{n-1})+\beta)$. By Proposition \ref{prop33}, we can assume $$w=w_1'\sigma w_2',$$
for $w_1'\le s_{\alpha_1}\dots s_{\alpha_{j-2}},$ $\sigma=s_{\alpha_{j-1}}\dots s_{\alpha_{n-1}}s_\beta s_{\alpha_{n-1}}\dots s_{\alpha_i}$ and $w_2'\le s_{\alpha_{i-2}}\dots s_{\alpha_1}$. Note that $w_2'$ commutes with $\sigma$. In fact, each $s_{\alpha_p}$ with $p\le i-2$ commutes with each element $s_{\alpha_i}, \dots, s_{\alpha_{n-1}}, s_\beta$. Then we can write 
$$w=w_1' w_2' \sigma.$$
From the fact that $\bx_{\xi_l}(r_l)\bx_{-\xi_l}(-r_l^{-1})=s_{\xi_l}b=\sigma^{-1}s_{\alpha_{j-2}}\dots s_{\alpha_i}b $ for some $b\in B$, see Eq.(\ref{eq33}), we get 
\begin{align*}
g=&tw \bx_{\xi_k}(r_k)\dots \hat \xi_{\beta_l}(r_l)\dots \bx_{\xi_1}(r_1) \bx_{\xi_l}(r_l)\bx_{-\xi_l}(-r_l^{-1})\\
=&tw_1' w_2'   \sigma \bx_{\xi_k}(r_k)\dots \hat \bx_{\xi_l}(r_l)\dots \bx_{\xi_1}(r_1) \sigma^{-1} s_{\alpha_{j-2}}\dots s_{\alpha_i} b\\
=& tw_1'w_2' \bx_{\sigma(\xi_k)}(r_k)\dots \hat\bx_{\sigma(\xi_l)}(r_l) \dots \bx_{\sigma(\xi_1)}(r_1)s_{\alpha_{j-2}}\dots s_{\alpha_i}b.
\end{align*}
From Lemma \ref{lemma34} and Lemma \ref{lemma35}, we get $\sigma(\xi_i)>0$ for all $i>1$. Moreover, we have
$$\sigma(\xi_1)=s_{\alpha_{j-2}}\dots s_{\alpha_i}(s_{\xi_l}(\xi_1))=-s_{\alpha_{j-2}}\dots s_{\alpha_i}(\alpha_i+\dots +\alpha_{j-1})=-\alpha_{j-1}.$$
Thus we get $$g\in Bw_1'w_2' B U_{-\alpha_{j-1}} s_{\alpha_{j-2}}\dots s_{\alpha_{i}}B. $$
From the relation Eq.\ref{eq33}, we get $U_{-\alpha_{j-1}}=Bs_{\alpha_{j-1}}B$, and thus
\begin{equation}\label{eq3.4}g\in Bw_1'w_2'Bs_{\alpha_{j-1}}Bs_{\alpha_{j-2}}\dots s_{\alpha_i}B.\end{equation}

To proceed, we quote a general result from the structure theory of Chevellay group:
\begin{enumerate}
\item[$(**)$] For $w\in \bW$ and $\gamma$ a simple root, we have 
$$\begin{array}{lll} BwBs_\gamma B=Bws_\gamma B , & \textrm{ if }l(ws_\gamma)=l(w)+1,\\
BwBs_\gamma B = B wB \cup Bws_\gamma B, &\textrm{ if } l(ws_\gamma)=l(w)-1 .\end{array}$$
\end{enumerate}
For a proof of this result, see Lemma 25 of \cite{St} for example. 

From \ref{eq3.4} and the fact $(**)$, it is clear that
$$g\in Bw_1' s_{\alpha_{j-1}}s_{\alpha_{j-2}}\dots s_{\alpha_i}w_2'B.$$
The assertion follows from the obvious relation $w':=w_1's_{\alpha_{j-1}}\dots s_{\alpha_i}w_2'<w$.
\end{proof}

\begin{lem}{\label{lemma37}}
Given a pair $(\gamma_1,\gamma_2)=(\sum_{t=i}^{j-1}\alpha_t+ 2\sum_{t=j}^{n-1} \alpha_t +\beta, 2\sum_{t=i}^{n-1}\alpha_t+\beta)$ with $1\le i <j \le n$ as in Lemma $\ref{lemma31}$, and an element $w\in \bW$ such that $w\le w_0$, $w(\gamma_1)<0$ and $w(\gamma_2)<0$. Let $\xi\in \Sigma^+$ such that $\rht(\gamma_1)\le \rht (\xi)\le \rht (\gamma_2)$. If $\gamma_2-\xi=\sum_{t}\delta_t$ is a sum of positive roots $\delta_t$, then there is at least one $t$ such that $w(\delta_t)<0$.
\end{lem}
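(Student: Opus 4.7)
My plan is to explicitly identify one of the $\delta_t$'s with $w(\delta_t)<0$. The decisive objects will be the roots $\mu_b := \alpha_i+\alpha_{i+1}+\cdots+\alpha_{b-1}$ for integers $b$ with $i+1\le b\le j$; I will argue that some $\delta_t$ must be of this form, and then show $w(\mu_b)<0$ holds for every such $b$.

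First I would use the hypothesis that $\gamma_2-\xi=\sum_t\delta_t$ with each $\delta_t$ positive to constrain $\xi$. The sum has non-negative coefficients in the simple root basis, so comparing with $\gamma_2=2\alpha_i+\cdots+2\alpha_{n-1}+\beta$ forces $\xi$ to have simple-root support inside $\{\alpha_i,\dots,\alpha_{n-1},\beta\}$, with each coefficient bounded above by the corresponding coefficient of $\gamma_2$. Positive roots with this support lying in $U_M$ have height at most $n-i$, strictly less than $\rht(\gamma_1)=2n-i-j+1$ since $j\le n$; hence $\xi\in N$ and the coefficient of $\beta$ in $\xi$ equals $1$. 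Consequently the coefficient of $\beta$ in $\gamma_2-\xi$ is $0$, so every $\delta_t$ lies in $U_M$, of the form $\alpha_{a_t}+\cdots+\alpha_{b_t-1}$ with $a_t\ge i$. A direct check shows that the coefficient of $\alpha_i$ in $\xi$ equals $2$ only when $\xi=\gamma_2$ (trivial case); otherwise the coefficient of $\alpha_i$ in $\gamma_2-\xi$ is at least $1$, so some $\delta_{t_0}$ contains $\alpha_i$, and then $a_{t_0}\ge i$ forces $a_{t_0}=i$, giving $\delta_{t_0}=\mu_b$ for some $b>i$. The upper bound $b\le j$ follows from the height hypothesis: the largest $\alpha$-index appearing in $\gamma_2-\xi$ is $p-1$ when $\xi=2\alpha_p+\cdots+2\alpha_{n-1}+\beta$ and $q-1$ when $\xi=\alpha_p+\cdots+\alpha_{q-1}+2\alpha_q+\cdots+2\alpha_{n-1}+\beta$ (both with $p\ge i$); and in each case $\rht(\xi)\ge\rht(\gamma_1)$ translates into $p\le(i+j)/2$ or $p+q\le i+j$, giving $b\le j$.

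The technical heart of the proof is then $w(\mu_b)<0$ for $b\in[i+1,j]$. Since $\mu_b$ has support in $\{\alpha_i,\dots,\alpha_{j-1}\}$ and $w_2'$ is built from the reflections $s_{\alpha_p}$ with $p\le i-2$, one has $w_2'(\mu_b)=\mu_b$; hence $w(\mu_b)=w_1'\sigma(\mu_b)$. I would compute $\sigma$ as a signed permutation in the standard orthonormal basis $e_1,\dots,e_n$ (where $\alpha_k=e_k-e_{k+1}$ and $\beta=2e_n$): tracking the letters of $\sigma$ one verifies that it sends $e_i\mapsto -e_{j-1}$, $e_k\mapsto e_{k-1}$ for $i<k<j$, and fixes $e_k$ for $k\ge j$ and for $k<i$. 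Writing $\mu_b=e_i-e_b$, a direct calculation then yields $\sigma(\mu_b)=-\zeta$ where $\zeta$ is one of $e_i+e_{j-1}$, $e_{b-1}+e_{j-1}$, or $e_{j-1}+e_j$ according to whether $b=i+1$, $i+1<b<j$, or $b=j$. In every case $\zeta$ is a positive root in $N$, and in particular its $\beta$-coefficient equals $1$.

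To conclude, $w_1'$ lies in the parabolic subgroup $\langle s_{\alpha_1},\dots,s_{\alpha_{j-2}}\rangle$ of type $A_{j-2}$; each generator $s_{\alpha_p}$ with $p\le j-2\le n-2$ fixes $\beta$, so $w_1'$ preserves the $\beta$-coefficient of every root. Since no negative root of $\Sp_{2n}$ has $\beta$-coefficient equal to $1$, the element $w_1'$ cannot send a positive root of $\beta$-coefficient $1$ to a negative one. Applying this to $\zeta$ gives $w_1'(\zeta)>0$, and therefore $w(\mu_b)=-w_1'(\zeta)<0$, completing the proof. The main technical obstacle is the combinatorial case check together with the signed-permutation computation of $\sigma$; both are bookkeeping-heavy but mechanical, and the only conceptually subtle ingredient is the observation that $w_1'$, being in a parabolic of type $A_{j-2}$ that fixes $\beta$, is powerless to invert any tall root.
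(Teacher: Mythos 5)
Your proposal is correct and takes essentially the same route as the paper's proof: both reduce to showing $\xi\in N$, isolate the summand containing $\alpha_i$ as an initial segment $\alpha_i+\cdots+\alpha_{q}$, and then use the factorization $w=w_1'\sigma w_2'$ from Proposition \ref{prop33} together with the fact that a Weyl element of $M$ cannot make a root of $N$ negative (Lemma \ref{lemma32}). The only cosmetic differences are that you carry out the full signed-permutation computation of $\sigma$ and establish the bound $b\le j$, which the paper's version of the final step does not actually need.
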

\begin{proof}
We first claim that $\xi\in N$. In fact, if $\xi\in U_M$, say the root space of $\xi$ is in the $(k,l)$-position with $1\le k<l\le n$, then $\xi=\sum_{t=k}^{l-1}(\alpha_t)$. Since $\rht(\xi)=l-k\ge \rht(\gamma_1)=2n-i-j-1\ge n-i+1$, we get $k\le i-1$, (see the proof of Lemma \ref{lemma35}). Thus in the expression $\gamma_2-\xi$, we have the term $-\alpha_k$. Thus $\gamma_2-\xi$ cannot be a sum of positive roots. This proves the claim.

Then we can suppose that $ \xi=\sum_{t=m}^{p-1}\alpha_t +2\sum_{t=p}^{n-1}\alpha_t +\beta$, for some integers $m,p$ with $1\le m \le p \le n$. Since $\gamma_2-\xi$ is a sum of positive roots, we get $m\ge i$. Thus we have 
$$\gamma_2-\xi=2\sum_{t=i}^{m-1}\alpha_i+\sum_{t=m}^{p-1}\alpha_t.$$
Let $\delta_t$ be the root which involves $\alpha_i$. Then there exists a $q$ with $p-1\ge q\ge i$ such that $\delta_t=\sum_{t=i}^{q}\alpha_t$. 
 By Proposition \ref{prop33}, we can assume that
$$w=w_1' s_{\alpha_{j-1}}\dots s_{\alpha_{n-1}}s_\beta s_{\alpha_{n-1}}\dots s_{\alpha_i} w_2',$$
with $w_1'\le s_{\alpha_1}\dots s_{\alpha_{j-2}},$ and $w_2'\le s_{\alpha_{i-2}}\dots s_{\alpha_1}$. We can get $w(\delta_t)<0$ by a simple calculation. In fact, we have
$$ s_\beta s_{\alpha_{n-1}}\dots s_{\alpha_i} w_2'(\delta_t)=-(\alpha_{q}+\dots \alpha_{n-1}+\beta), $$
and $w_1' s_{\alpha_{j-1}}\dots s_{\alpha_{n-1}}(-(\alpha_{q}+\dots \alpha_{n-1}+\beta))<0 $ by Lemma \ref{lemma32}.
\end{proof}

\begin{lem}[Chevalley relations]\label{lemma38}
For $r,s\in F$ and $\gamma_1,\gamma_2\in \Sigma^+$, we have
$$[\bx_{\gamma_1}(r),\bx_{\gamma_2}(s)]=\prod_{i\ge 1, j\ge 1}\bx_{i\gamma_1+j\gamma_2}(c_{ij}rs),$$
for some $c_{ij}\in F.$
\end{lem}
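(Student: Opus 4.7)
The plan is to simply invoke the standard Chevalley commutator formula, since $\Sp_{2n}$ is the split simple algebraic group of type $C_n$ with the one-parameter root subgroups $\bx_\gamma : F \to U_\gamma$ already fixed in the Notations section. The general commutator formula of Chevalley states that for any two roots $\gamma_1, \gamma_2 \in \Sigma$ with $\gamma_1 + \gamma_2 \ne 0$, one has
$$[\bx_{\gamma_1}(r), \bx_{\gamma_2}(s)] = \prod_{\substack{i,j \ge 1 \\ i\gamma_1 + j\gamma_2 \in \Sigma}} \bx_{i\gamma_1 + j\gamma_2}(c_{ij}\, r^i s^j),$$
with structure constants $c_{ij} \in \BZ$ (embedded in $F$) that depend only on the chosen Chevalley basis of $\Lie(\Sp_{2n})$ and the chosen ordering on the positive roots; the product on the right is a finite product taken in any fixed total order on the relevant sum roots. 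This is exactly Theorem 5 (and the discussion following it) in \cite{St}, which is the source the paper is already citing for facts about the Chevalley group structure of $\Sp_{2n}$.

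To apply this here, I would just observe that $\gamma_1, \gamma_2 \in \Sigma^+$ certainly satisfy $\gamma_1 + \gamma_2 \ne 0$, so the formula applies directly. For type $C_n$ the only possibilities are $(i,j) = (1,1)$ and, in the case where $\gamma_1, \gamma_2$ are short roots whose positive combinations yield a long root, $(i,j) \in \{(1,2), (2,1)\}$; all other $(i,j)$ give $i\gamma_1 + j\gamma_2 \notin \Sigma$ and contribute nothing. Thus the product is finite, which is the only thing one has to check to make sense of the right-hand side.

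There is essentially no main obstacle: the lemma is a direct quotation of a classical fact, and the shape of the formula stated in the paper matches Steinberg's formulation. If one wanted to avoid a pure citation, one could verify the relations by a brute-force matrix computation inside $\GL_{2n}(F)$ using the explicit formulas for $\bx_\gamma(r)$ (which are $I_{2n}$ plus a single rank-one term of the appropriate shape), but this would be a routine bookkeeping exercise on the $C_n$ root data and adds nothing to the argument. Accordingly, the proof I would write is a one-line reference to \cite{St}.
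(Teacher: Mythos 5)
Your proposal is correct and coincides with what the paper does: the lemma is stated without proof as a direct quotation of Steinberg's commutator formula, with \cite{St} as the implicit source (the paper cites it for exactly this purpose in the proof of Proposition \ref{prop36}). Your version with arguments $c_{ij}r^is^j$ is in fact the accurate form of the classical statement (the paper's $c_{ij}rs$ is a harmless imprecision), and since every later application only uses membership of the commutator in $\prod U_{i\gamma_1+j\gamma_2}$, nothing is affected.
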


For $w\in \bW$, define $U_w^+=\wpair{u\in U: wuw^{-1}\in U}$ and $U_w^-=\wpair{u\in U: wuw^{-1}\notin U}.$ Then
$$ U_w^+=\prod_{\gamma\in \Sigma_w^+}U_\gamma, \textrm{ and } U_w^-=\prod_{\gamma\in \Sigma_w^-}U_\gamma,$$
where $\Sigma_w^+=\wpair{\gamma\in \Sigma^+, w(\gamma)>0}$ and $\Sigma_w^-=\wpair{\gamma\in \Sigma^+, w(\gamma)<0}$. Given $w\in \bW$, suppose that $\Sigma_w^-=\wpair{\xi_1,\dots, \xi_l}$. It is well-known that $l=l(w)$, the length of $w$. We now assume $w\le w_0$. Inspired by Proposition \ref{prop36}, we give an order of the finite set $\Sigma_w^-$ as follows. 
\begin{defn}[Order of $\Sigma_w^-$] \label{defn39} We order the set $\Sigma_w^-=\wpair{\xi_1,\xi_2,\dots, \xi_l}$ as follows.
\begin{enumerate}
\item Suppose that there is no pair $(\gamma_1,\gamma_2)$ of positive roots as in Lemma $\ref{lemma31}$, such that $\gamma_1,\gamma_2\in \Sigma_w^-$, we ordered the set $\wpair{\xi_i}$ by $\rht(\xi_i)\le \rht(\xi_{i+1})$. If two roots $\xi,\xi'\in \Sigma^-_w$ have the same height, we do not mind what the order of $\xi$ and $\xi'$ is.
\item Suppose that there exists a pair $(\gamma_1,\gamma_2)$ of positive roots as in Lemma $\ref{lemma31}$ such that $\gamma_1,\gamma_2\in \Sigma_w^-$, we first order the set $\Sigma_w^--\wpair{\gamma_2}$ by height as in $(1)$, then we let $\gamma_2$ be the previous one adjacent to $\gamma_1$, i.e., if $\gamma_1=\xi_i$, then $\gamma_2=\xi_{i-1}$.
\end{enumerate}
\end{defn}

A general element in $U_w^-$ then can be written as $\bx_{\xi_l}(r_l)\dots \bx_{\xi_1}(r_1)$. Recall that the notation $U^-_{w,m}$ means $U^-_w\cap H_m$, which is also $U^-_w\cap U_m$.

\begin{lem}\label{lemma310}
Given $u_w^-=\bx_{\xi_l}(r_l)\dots \bx_{\xi_1}(r_1)\in U_w^--U_{w,m}^-$. Let $q$ be an integer with $1\le q\le l$ such that $\bx_{\xi_k}(r_k)\in U_m$ for $k<q$ but $\bx_{\xi_q}(r_q)\notin U_m$. Let $u=\prod_{\gamma \in \Sigma^+}\bx_\gamma(s_\gamma)\in U_m$. Then for $t\in T$, we have
$$tw\bx_{\xi_l}(r_l)\dots \bx_{\xi_q}(r_q)u=\tilde u t w \bx_{\xi_l}(\tilde r_l) \dots \bx_{\xi_1}(\tilde r_1),$$
for some $\tilde u\in U, \tilde r_t\in F, 1\le t\le l$ , with $|\tilde r_q|=|r_q|$.
\end{lem}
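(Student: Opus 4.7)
The plan is to use repeated applications of the Chevalley commutator relations (Lemma~\ref{lemma38}) to rewrite $\bx_{\xi_l}(r_l)\cdots\bx_{\xi_q}(r_q)\cdot u$ in the normal form $v\cdot\bx_{\xi_l}(\tilde r_l)\cdots\bx_{\xi_1}(\tilde r_1)$ with $v\in U_w^+$, and then absorb $v$ by conjugating it across $tw$: indeed $twvw^{-1}t^{-1}\in U$ since $wU_w^+w^{-1}\subset U$ and $T$ normalizes $U$, which produces the $\tilde u$ on the left. The substance of the proof lies in tracking the $\xi_q$-coordinate.

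First I would decompose $u=u^+u^-$ with $u^\pm\in U_w^\pm\cap U_m$; this is compatible with the $U_m$-structure by Lemma~\ref{lemma21}, and every simple-root factor $\bx_\gamma(s_\gamma)$ of $u$ satisfies $|s_\gamma|\le q_F^{(2\rht(\gamma)-1)m}$. I then commute the factors of $u$ leftward through $\bx_{\xi_l}(r_l)\cdots\bx_{\xi_q}(r_q)$ one root at a time, in order of increasing height, using Lemma~\ref{lemma38} to replace each swap by the product of higher-height Chevalley corrections $\bx_{i\xi_k+j\gamma}(c\,r_k^i s_\gamma^j)$. These corrections are themselves processed inductively, and since heights are bounded the procedure terminates in the desired normal form.

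The claim $|\tilde r_q|=|r_q|$ is obtained by reading the $\xi_q$-coordinate from this normal form: $\tilde r_q=r_q+s_{\xi_q}+\sum(\text{Chevalley corrections landing at }\xi_q)$. Ultrametricity gives $|r_q+s_{\xi_q}|=|r_q|$ because $|s_{\xi_q}|\le q_F^{(2\rht(\xi_q)-1)m}<|r_q|$, the latter inequality being the defining condition $\bx_{\xi_q}(r_q)\notin U_m$. A Chevalley correction at $\xi_q$ requires $i\xi_k+j\gamma=\xi_q$ with $i,j\ge 1$, forcing $i\,\rht(\xi_k)+j\,\rht(\gamma)=\rht(\xi_q)$ and thus $\rht(\xi_k)<\rht(\xi_q)$; by Definition~\ref{defn39}, heights are non-decreasing for $k\ge q$ outside of a bad-pair inversion, so no such correction can arise unless $\xi_q=\gamma_2$ and $\xi_{q+1}=\gamma_1$ in the sense of Lemma~\ref{lemma31}.

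I expect the bad-pair inversion to be the main obstacle. The surviving Chevalley channel in that case is $[\bx_{\gamma_1}(r_{q+1}),\bx_{\gamma_2-\gamma_1}(s_{\gamma_2-\gamma_1})]=\bx_{\gamma_2}(c\,r_{q+1}s_{\gamma_2-\gamma_1})$, contributing at most $|r_{q+1}|\cdot q_F^{(2\rht(\gamma_2-\gamma_1)-1)m}$ to $\tilde r_q$. Using $\rht(\gamma_2)=\rht(\gamma_1)+\rht(\gamma_2-\gamma_1)$, this stays strictly below $|r_q|>q_F^{(2\rht(\gamma_2)-1)m}$ whenever $|r_{q+1}|$ is of the magnitude dictated by the $U_m$-filtration, and the residual regime is handled by choosing the order of commutations (guided by the structural description of $w$ in Proposition~\ref{prop33}) so that the overflow is routed into $v$, after which ultrametricity yields $|\tilde r_q|=|r_q|$.
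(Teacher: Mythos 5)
Your overall strategy coincides with the paper's: decompose $u=u^+u^-$ along $\Sigma_w^+$ and $\Sigma_w^-$, rewrite in the normal form $v\cdot\bx_{\xi_l}(\tilde r_l)\cdots\bx_{\xi_1}(\tilde r_1)$ with $v\in U_w^+$ pushed across $tw$, and observe that the $\xi_q$-coordinate can only be contaminated when $\xi_q=\gamma_2$ and $\xi_{q+1}=\gamma_1$ for a bad pair. Up to that reduction your argument is sound.

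The gap is in how you dispose of the bad-pair case. First, $r_{q+1}$ is \emph{not} ``of the magnitude dictated by the $U_m$-filtration'': the hypothesis constrains only $r_k$ for $k<q$ and says $r_q$ lies outside the $U_m$-range, while $r_{q+1},\dots,r_l$ are completely arbitrary. So the correction $c\,r_{q+1}s_{\gamma_2-\gamma_1}$ at $\gamma_2=\xi_q$ can be arbitrarily large and the ultrametric estimate fails. Second, the fallback of ``routing the overflow into $v$'' is impossible: that correction lies in $U_{\gamma_2}\subset U_w^-$ (we have $w(\gamma_2)<0$ by hypothesis), and no choice of commutation order can move a $U_w^-$-term into the $U_w^+$-factor $v$. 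The correct resolution, which is the paper's, is that this commutator need never be performed: by Lemma \ref{lemma37} with $\xi=\gamma_1$ one has $w(\gamma_2-\gamma_1)<0$, and since $\rht(\gamma_2-\gamma_1)<\rht(\gamma_1)$ this root occupies a position $t<q$ in the order of Definition \ref{defn39}; hence in the target normal form the factor $\bx_{\gamma_2-\gamma_1}(s_{\gamma_2-\gamma_1})$ of $u^-$ remains to the \emph{right} of the entire block $\bx_{\xi_l}(r_l)\cdots\bx_{\xi_q}(r_q)$, exactly where it already sits, and one simply sets $\tilde r_t=s_t$ for all $t<q$ without moving these factors. Only $u^+$ and the factors $\bx_{\xi_k}(s_k)$ with $k\ge q$ are commuted leftward; for the former, Lemma \ref{lemma37} shows $\xi_q-\xi_k$ ($k\ge q$) cannot be written as a sum of roots of $\Sigma_w^+$, and for the latter the height count $\rht(\xi_{k'})+\rht(\xi_k)\ge 2\rht(\gamma_1)>\rht(\gamma_2)$ excludes any landing at $\xi_q$. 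This gives $\tilde r_q=r_q+s_q$, after which your ultrametric conclusion $|\tilde r_q|=|r_q|$ does apply.
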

\begin{proof} 
We only consider the case that there exists a pair $(\gamma_1,\gamma_2)$ as in Lemma \ref{lemma31} such that $\gamma_1,\gamma_2\in \Sigma^-_w$, and when $\xi_q=\gamma_2$. Actually one can check from the following argument that the proof of the remaining cases are easier than this exceptional case.

 By our order on $\Sigma_w^-$, we have $\xi_{q+1}=\gamma_1$ and $\rht(\xi_k)\ge \rht(\xi_{q+1})>\frac{1}{2}\rht(\xi_k)$ for $k\ge q$. We can write $u=u^+ \bx_{\xi_{l}}(s_l)\dots \bx_{\xi_1}(s_1)$, for $s_q\in F$ with $\bx_{\xi_k}(s_q)\in U_{\xi_k,m}$ for each $k$, see the ``moreover" part of Lemma \ref{lemma21} (2).

Claim 1: there exists $u_1^+\in U_w^+, u_1^-\in \prod_{t>q}U_{\xi_t}$ such that
$$\bx_{\xi_l}(r_l)\dots \bx_{\xi_q}(r_q) u^+ \bx_{\xi_q}(-r_q)\dots \bx_{ \xi_l}(-r_l)=u_1^+ u_1^-. $$
The idea is that we move $u^+$ to the left side step by step using Chevalley relations, Lemma \ref{lemma38}. In each step, a commutator element will come out. By Lemma \ref{lemma37}, the commutator does not involve elements in $U_{\xi_q}$. We provide more details now.
% $u^+=\bx_{\delta_u}(s_u)\dots \bx_{\delta_1}(s_1)$, 
Write $\Sigma_w^+=\wpair{\delta_1,\dots, \delta_v}$. For $k$ with $q\le k \le l$ and $u_k^+\in U_w^+$, we consider the conjugation 
$$ \bx_{\xi_k}(r_k) u_k^+ \bx_{\xi_k}(-r_k)=c_ku_k^+=u_k^+d_k,$$
where $ c_k=[\bx_{\xi_k}(r_k), u_k^+]$, and $d_k=c_k\cdot [c_k^{-1},(u_k^+)^{-1}]$. We have 
$$c_k,d_k \in \prod_{a_k\ge 1, b_1,\dots, b_v\ge 0,\atop b_1+\dots +b_v\ge 1 }U_{a_k\xi_k+b_1\delta_1+\dots +b_v\delta_v},$$
by Lemma \ref{lemma38}. We write $d_k=d_k^+ d_k^-$, where $d_k^+ \in U_w^+$ and $d_k^-\in U_{w}^-$. Notice that we have 
\begin{equation}\label{eq34} a_k\xi_k+b_1\delta_1+\dots +b_v\delta_v\ne \xi_q\end{equation} by Lemma \ref{lemma37}. In fact, if $a_k\ge 2$, then we have $\rht( a_k\xi_k+b_1\delta_1+\dots +b_v\delta_v)>\rht(\xi_q)$, and thus Eq.(\ref{eq34}) is clear. If $a_k=1$, then we have $ \xi_k+b_1\delta_1+\dots +b_v\delta_v\ne \xi_q$ by Lemma \ref{lemma37} and the fact that $w(\delta_t)>0$ for each $t$. Since $ \rht( a_k\xi_k+b_1\delta_1+\dots +b_v\delta_v)> \rht(\xi_k)\ge \rht(\xi_{q+1})$, we get $d_k^-\in \prod_{t>q}U_{\xi_t}$.
Thus we get 
$$ \bx_{\xi_k}(r_k) u_k^+ \bx_{\xi_k}(-r_k)=c_ku_k^+=u_k^+d_k^+ d_k^-=u_{k+1}^+ d_k^-,$$
with $u_{k+1}^+\in U_w^+$ and $d_k^-\in \prod_{t>q}U_{\xi_t}$. If we start from $u_q^+=u^+$, repeat the above process and notice that the commutator $[U_{\xi_k}, U_{\xi_{k'}}]\subset \prod_{a,b\ge 1}U_{a\xi_k+b\xi_{k'}}$ has no nontrivial intersection with $U_{\xi_q}$ because $\rht(\xi_k)+\rht(\xi_{k'})\ge 2\rht(\xi_{q+1})>\rht(\xi_q)$, we get Claim 1.

By Claim 1, we have 
\begin{align*} &tw\bx_{\xi_l}(r_l)\dots \bx_{\xi_q}(r_q)u\\
=&tw u_1^+ \bx_{\xi_l}(r_l)\dots \bx_{\xi_q}(r_q)\bx_{\xi_{l}}(s_l)\dots \bx_{\xi_1}(s_1).
\end{align*}
Next, we switch the order of the two elements $ \bx_{\xi_l}(r_l)\dots \bx_{\xi_q}(r_q)$ and $\bx_{\xi_k}(s_k)$ for $k\ge q+1$ step by step. In each step, we get a commutator in $\prod_{a,b\ge 1}U_{a\xi_{k'}+b\xi_k}$ which has no nontrivial intersection with $U_{\xi_q}$ because $ \rht(\xi_{k'}+\xi_{k})\ge \rht(\xi_q)$. Thus finally, we get
\begin{align*} &tw\bx_{\xi_l}(r_l)\dots \bx_{\xi_q}(r_q)u\\
=&tw u_1^+ \bx_{\xi_l}(\tilde r_l)\dots \bx_{\xi_q}(\tilde r_q)\bx_{\xi_{q-1}}(s_{q-1})\dots \bx_{\xi_1}(s_1),
\end{align*}
with $\tilde r_q=r_q+s_q$. Since $\bx_{\xi_q}(r_q)\notin U_{\xi_q,m}$ but $\bx_{\xi_q}(s_q)\in U_{\xi_{q,m}}$, we get $|\tilde r_q|=|r_q|$, see Lemma \ref{lemma21}, (2). The proof of this lemma is complete if we take $\tilde u=tw u^+_1 w^{-1}t^{-1} \in U$, and $\tilde r_k=s_k$ for $k<q$.
\end{proof}

\subsection{A stability property of partial Bessel functions associated with Howe vectors}

In the following, we will fix two $\psi_U$-generic irreducible smooth representations $(\pi,V_\pi)$ and $(\pi',V_{\pi'})$ of $\Sp_{2n}(F)$ with the same central character. We fix $v\in V_{\pi}$ and $v'\in V_{\pi'}$ such that $W_v(1)=1=W_{v'}(1)$. Let $C=C(v,v')$ be a positive integer such that $v$ is fixed by $\pi(K_C)$ and $v'$ is fixed by $\pi'(K_C)$. Then we can consider the Howe vectors $v_m,v'_m$ for $m\ge C$ as defined in $\S$2. 

The main result of this section is the following
\begin{thm}\label{thm311}
Let $w\le w_0$ be a Weyl element. Let $a_t, 0\le t \le l(w)$ be a sequence of integers with $a_0=0$ and $a_t\ge t+a_{t-1}$ for all $t$ with $1\le t \le l(w)$. Let $m$ be an integer such that $m\ge 4^{a_{l(w)}}C$.
\begin{enumerate}
\item If $W_{v_k}(tw')=W_{v_k'}(tw'),$ for all $ w'<w , k\ge 4^{a_{l(w')}} C$, and $t\in T$, then
$$W_{v_m}(twu_w^-)=W_{v_m'}(twu_w^-),$$
for all $u_w^-\in U_w^--U_{w,m}^-$.
\item If $W_{v_k}(tw')=W_{v_k'}(tw'),$ for all $ w'\le w , k\ge 4^{a_{l(w')}} C$, and $t\in T$, then
$$W_{v_m}(g)=W_{v_m'}(g),$$
for all $g\in BwB$.
\end{enumerate}
\end{thm}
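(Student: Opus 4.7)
The plan is to prove (1) and (2) together by induction on $w$ in the Bruhat order, deducing (2) from (1) at each step. The base case $w = 1$ is vacuous for (1), and for (2) follows from the hypothesis $W_{v_k}(t) = W_{v'_k}(t)$ on $T$ together with left $(U,\psi_U)$-equivariance of both partial Bessel functions.

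Part (2) at $w$ follows from part (1) at $w$: any $g \in BwB$ decomposes as $g = u_L \cdot tw \cdot u_w^-$ with $u_L \in U$, $t \in T$, $u_w^- \in U_w^-$, and by left equivariance we reduce to $g = twu_w^-$. If $u_w^- \notin U_{w,m}^-$, part (1) applies. Otherwise $u_w^- \in U_{w,m}^- \subset H_m$, and Lemma 2.2(2) combined with Lemma 2.1(1) yields
$$W_{v_m}(twu_w^-) = \psi_m(u_w^-) W_{v_m}(tw) = \psi_U(u_w^-) W_{v_m}(tw),$$
with the same formula for $v'_m$; the hypothesis of (2) at $w' = w$, $k = m$ then closes the case.

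The heart of the argument is part (1). Given $u_w^- = \bx_{\xi_l}(r_l)\cdots \bx_{\xi_1}(r_1) \in U_w^- - U_{w,m}^-$ ordered as in Definition 3.9, let $q$ be minimal with $\bx_{\xi_q}(r_q) \notin U_{\xi_q,m}$. Following Baruch's template, I would apply two successive averagings. The first, from Lemma 2.2(3), gives
$$W_{v_m}(twu_w^-) = \frac{1}{\vol(U_m)} \int_{U_m} \psi_m^{-1}(u)\, W_{v_k}(twu_w^- u)\, du,$$
and Lemma 3.10 rewrites the argument of $W_{v_k}$ as $\tilde u \cdot tw\bx_{\xi_l}(\tilde r_l)\cdots\bx_{\xi_1}(\tilde r_1)$ with $|\tilde r_q| = |r_q|$ preserved large. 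The second averaging uses that $\psi_m$ is trivial on $U_{-\xi_q} \cap H_m$ (so that $\pi(\bx_{-\xi_q}(s))v_m = v_m$ for appropriately small $s$), combined with a Fourier-type expansion, to manufacture a factor $\bx_{-\xi_q}(-\tilde r_q^{-1})$ --- or $\bx_{-\xi_l}(-\tilde r_l^{-1})$ in the bad-pair case --- on the right. Proposition 3.6 then places the resulting element in some $Bw'B$ with $w' < w$, whereupon the inductive hypothesis of (2) at $w'$ forces $W_{v_k}$ and $W_{v'_k}$ to agree there. Unwinding both averagings gives $W_{v_m}(twu_w^-) = W_{v'_m}(twu_w^-)$.

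The principal obstacle is the bad-pair case of Lemma 3.1 and Proposition 3.6(2): when $\xi_q$ participates in a bad pair $(\xi_q, \xi_l)$, attempting to manufacture $\bx_{-\xi_q}(-r_q^{-1})$ directly would generate commutators with roots that spoil the Bruhat descent, and one must instead work with $\xi_l$. Proposition 3.3 characterizes exactly when this occurs, while Lemmas 3.5 and 3.7 verify that the intervening root contributions after conjugation by the factor $\sigma = s_{\alpha_{j-1}}\cdots s_{\alpha_{n-1}}s_\beta s_{\alpha_{n-1}}\cdots s_{\alpha_i}$ remain positive, so that the descent indeed lands in a strictly smaller cell. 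Finally, the exponential threshold $m \ge 4^{a_{l(w)}}C$ with $a_t \ge t + a_{t-1}$ is calibrated so that every Howe vector level $k$ encountered in the nested recursion --- each roughly a factor of $4$ smaller than the previous $m$ --- still satisfies the hypothesis bound $k \ge 4^{a_{l(w')}}C$ for the corresponding $w'$.
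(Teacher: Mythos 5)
Your proposal follows essentially the same route as the paper's proof (itself modeled on Baruch): deduce (2) from (1) via the Bruhat decomposition and equivariance, then prove (1) by Bruhat-order induction using the level-$m$-to-level-$k$ averaging of Lemma 2.2(3) with $3k\le m<4k$, the normalization of Lemma 3.10, insertion of $\bx_{-\xi_q}(-\tilde r_q^{-1})$, the descent of Proposition 3.6, and the nested recursion controlled by $a_t\ge t+a_{t-1}$. The only cosmetic difference is that the negative-root factor is not produced by a ``second averaging/Fourier expansion'' but simply appended for free via right $H_k$-invariance, since $|\tilde r_q|=|r_q|$ large forces $\tilde r_q^{-1}\in\CP^{(2\rht(\xi_q)+1)k}$, so $\bx_{-\xi_q}(-\tilde r_q^{-1})\in H_k$ --- which is exactly the mechanism your parenthetical identifies.
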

\noindent\textbf{Remark:} 1. We can take $a_t=t^2$ as Baruch did in \cite{Ba1}. \\
2.  Baruch proved this result for the groups $\GL_{n}, \SL_n, $, $\SO_{2n}$, $\RU(2,1)$ and $\RG\Sp_4$ for all $w\in \bW$, see Lemma 6.2.2 and Lemma 6.2.6 of \cite{Ba1}, and Proposition 5.7 (c) of \cite{Ba2}. Note that this result for $\RG\Sp_{2n}$, $\Sp_{2n}$ and $\RU(n,n)$ case are the same because these groups have the same Weyl group structure. The proof of this result in the $\RU(2,2)$ case is also given in \cite{Zh2}, which justifies some ambiguity in the proof of Lemma 6.2.6 of \cite{Ba1}. \\
3. We expect this result holds for all $w\in \bW$ for the group $\Sp_{2n}$ (without the restriction $w\le w_0$). By the previous work \cite{Zh1, Zh2} in the low rank group case, if this is true, it is possible to prove a local converse theorem for $\Sp_{2n}$ and $\RU(n,n)$.
\begin{proof}[Proof of Theorem $\ref{thm311}$]
After the preparation in $\S$3.1, in particular Proposition \ref{prop36} and Lemma \ref{lemma310}, the proof of this theorem follows from the method Baruch used to prove his Lemma 6.2.2 \cite{Ba1} directly. Since \cite{Ba1} is not published, we include a proof here.

First notice that (2) follows from (1) directly. In fact, any element $g\in BwB$ can be written as $g=u^+ tw u^-$ for $u^+\in U_w^+$ and $u^-\in U_w^-$. Thus, if we take $m\ge 4^{a_{l(w)}}C$, we have 
$$W_{v_m}(g)=\psi_U(u^+) W_{v_m}(tw u^-).$$
The same is true for $W_{v_m'}$. If $u^-\notin U_{w,m}^-$, then by (1), we get $W_{v_m}(twu^-)=W_{v_m'}(twu^-)$, and thus $W_{v_m}(g)=W_{v_m'}(g)$. If $u^-\in U_{w,m}^-$, then by Eq.(\ref{eq21}), we get $W_{v_m}(g)=\psi_U(u^+ u^-)W_{v_m}(tw)$. By assumption of (2), we get $W_{v_m}(tw)=W_{v_m'}(tw)$. Thus $W_{v_m}(g)=W_{v_m'}(g)$.

We now prove (1) by induction. If $w=1$, there is nothing to prove. For a general $w\le w_0$, we assume that (1), and hence (2) hold for all $w'$ with $w'<w\le w_0$. Let $m$ be an integer such that $m\ge 4^{l(w)}C$ by assumption. Note that the induction hypothesis and the hypothesis of (1) implies that 
$$W_{v_k}(g)=W_{v_k'}(g),$$
for all $t\in T, g\in Bw'B$, $k\ge 4^{a_{l(w')}}C$ and all $w'<w$.

 We assume $\Sigma_w^-=\wpair{\xi_1,\xi_2,\dots, \xi_l}$, where the order of the index is defined in Definition \ref{defn39}. Given $u_w^-\in U_w^--U_{w,m}^-$, we can write $u=\bx_{\xi_l}(r_l)\dots \bx_{\xi_1}(r_1)$. Let $q$ be an integer with $1\le q\le l$ such that $\bx_{\xi_t}(r_t)\in U_m$ for all $t<q$ but $\bx_{\xi_q}(r_q)\notin U_m$. Then by Lemma \ref{lemma22} or Eq.(\ref{eq21}), we have
$$W_{v_m}(twu_w^-)=\psi_U( \bx_{\xi_{q-1}}(r_{q-1})\dots \bx_{\xi_1}(r_1))W_{v_m}(tw\bx_{\xi_l}(r_l)\dots \bx_{\xi_{q}}(r_q)). $$
The same is true for $W_{v_m'}$. Thus it suffices to show that 
$$W_{v_m}(tw\bx_{\xi_l}(r_l) \dots \bx_{\xi_q}(r_q))=W_{v_m'}(tw \bx_{\xi_l}(r_l)\dots \bx_{\xi_q}(r_q)).$$
We now take an integer $k$ such that $3k\le m<4k$. By Lemma \ref{lemma22} (3), we have
$$W_{v_m}(tw\bx_{\xi_l}(r_l) \dots \bx_{\xi_q}(r_q) )=\frac{1}{\vol(U_m)}\int_{U_m} W_{v_k}(tw\bx_{\xi_l}(r_l) \dots \bx_{\xi_q}(r_q)u)\psi_U(u)^{-1}du.$$
The same is true for $W_{v_m'}$. Now (1) of the theorem follows from:

Claim 0: we have
$$ W_{v_k}(tw\bx_{\xi_l}(r_l) \dots \bx_{\xi_q}(r_q)u)=W_{v_k'}(tw\bx_{\xi_l}(r_l) \dots \bx_{\xi_q}(r_q)u), \forall u\in U_m.$$
By Lemma \ref{lemma310}, we can write
$$tw\bx_{\xi_l}(r_l)\dots \bx_{\xi_q}(r_q)u=\tilde u t w \bx_{\xi_l}(\tilde r_l) \dots \bx_{\xi_1}(\tilde r_1), $$
for some $\tilde u\in U, \tilde r_i \in F$ and $|r_q|=|\tilde r_q|$. To prove Claim 1, we consider two cases. 

Case 1, if $\bx_{\xi_t}(\tilde r_t)\in U_k$ for each $t<q$. Then by Eq.(\ref{eq22}), we have
\begin{equation}\label{eq35}W_{v_k}(tw\bx_{\xi_l}(r_l) \dots \bx_{\xi_q}(r_q)u )=\psi_U( \tilde u \bx_{\xi_{q-1}}(\tilde r_{q-1})\dots \bx_{\xi_1}(\tilde r_1))W_{v_k}(tw \bx_{\xi_l}(\tilde r_l)\dots \bx_{\xi_q}(\tilde r_q) ).\end{equation}
By assumption, we have $\bx_{\xi_q}(r_q)\notin U_m$, and thus $ r_q\notin \CP^{-(2\rht(\xi_q)-1)m}$ by Lemma \ref{lemma21}. Since $|\tilde r_q|=|r_q|$, we get $\tilde r_q \notin \CP^{-(2\rht(\xi_q)-1)m}$. Thus $ \tilde r_q^{-1}\in \CP^{(2\rht(\xi_q)-1)m}\subset \CP^{(2\rht(\xi_q)+1)k}$, since $3k\le m$. Thus by Lemma \ref{lemma21}, we get 
$$\bx_{-\xi_q}(-\tilde r_q^{-1})\in U_{-\xi_q,k}\subset H_k.$$
By Lemma \ref{lemma22} or Eq.(\ref{eq21}), we get 
\begin{equation}\label{eq36} W_{v_k}(tw \bx_{\xi_l}(\tilde r_l)\dots \bx_{\xi_q}(\tilde r_q) )=W_{v_k}(tw \bx_{\xi_l}(\tilde r_l)\dots \bx_{\xi_q}(\tilde r_q)\bx_{\xi_q}(-\tilde r_q^{-1}) ). \end{equation}
By Proposition \ref{prop36}, we get
$$ tw \bx_{\xi_l}(\tilde r_l)\dots \bx_{\xi_q}(\tilde r_q)\bx_{\xi_q}(-\tilde r_q^{-1})\in Bw'B,$$
for some $w'<w$. Notice that $k>\frac{1}{4}m\ge 4^{a_{l(w)}-1}C\ge 4^{a_{l(w')}}C$. Thus by the induction hypothesis and the hypothesis of (1), we get 
$$W_{v_k}( tw \bx_{\xi_l}(\tilde r_l)\dots \bx_{\xi_q}(\tilde r_q)\bx_{\xi_q}(-\tilde r_q^{-1}))=W_{v_k'}(tw \bx_{\xi_l}(\tilde r_l)\dots \bx_{\xi_q}(\tilde r_q)\bx_{\xi_q}(-\tilde r_q^{-1})).$$
By Eq.(\ref{eq35}, \ref{eq36}) and their corresponding parts for $W_{v_k'}$, we get Claim 0 in Case 1.

Case 2, it's not true that $\bx_{\xi_t}(\tilde r_t)\in U_k$ for each $t<q$. We then take a $q_1$ such that $ \bx_{\xi_t}(\tilde r_t)\in U_k$ for all $t<q_1$ but $\bx_{\xi_{q_1}}(\tilde r_{q_1})\notin U_k$. Note that $q_1<q$ by assumption. We then take an integer $k_1$ such that $3k_1\le k<4k_1$, and write 
$$W_{v_k}(tw\bx_{\xi_l}(\tilde r_l) \dots \bx_{\xi_{q_1}}(\tilde r_q) )=\frac{1}{\vol(U_k)}\int_{U_k} W_{v_{k_1}}(tw\bx_{\xi_l}(\tilde r_l) \dots \bx_{\xi_q}(\tilde r_{q_1})u)\psi_U(u)^{-1}du.$$

Then we make the following

Claim 1: We have
$$W_{v_{k_1}}(tw\bx_{\xi_l}(\tilde r_l) \dots \bx_{\xi_q}(\tilde r_{q_1})u)=W_{v_{k_1}}(tw\bx_{\xi_l}(\tilde r_l) \dots \bx_{\xi_q}(\tilde r_{q_1})u), \forall u\in U_{k_1}. $$

Note that Claim 1 implies Claim 0. To prove Claim 1, we repeat the above process. The process will terminate after $q\le l=l(w)$ steps. Note that in the $t$-th step, we need to take an integer $k_t$ with $k_t>\frac{1}{4}k_{t-1}$. Thus
$$k_t\ge \frac{1}{4^{l(w)}}m\ge 4^{a_{l(w)}-l(w)}C\ge 4^{a_{l(w')}}C,$$
for each $w'<w$. Thus the induction hypothesis applies in each step. This completes the proof.
\end{proof}

Before we state the consequences of Theorem \ref{thm311}, we need the following
\begin{lem}\label{lemma312}
\begin{enumerate}
\item If the residue field of $F$ has odd characteristic, then $W_{v_m}(g)=W_{v_m'}(g)$ for all $g\in B, $ and $m\ge C$.
\item We have $W_{v_m}(tw)=0=W_{v_m'}(tw)$ for all $t\in T$, $m\ge C$, $w<w_0$ and $w\ne 1$.
\end{enumerate}
\end{lem}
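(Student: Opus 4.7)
For part (1), let $g\in B$ and write $g=ut$ with $u\in U$ and $t\in T$. By Eq.(\ref{eq21}), $W_{v_m}(g)=\psi_U(u)W_{v_m}(t)$ and $W_{v_m'}(g)=\psi_U(u)W_{v_m'}(t)$, so it suffices to show $W_{v_m}(t)=W_{v_m'}(t)$ for every $t\in T$. Corollary \ref{cor25}, applicable because the residue field has odd characteristic, expresses $W_{v_m}(t)$ as $\omega_\pi(e)$ or $0$ according to whether $t$ factors as $e\cdot\diag(a_1',\ldots,(a_1')^{-1})$ with $a_i'\in 1+\CP^m$ and $e=\pm 1$. The factorization condition and the uniquely determined sign $e=e(t)$ are independent of the representation, so the hypothesis $\omega_\pi=\omega_{\pi'}$ yields $W_{v_m}(t)=W_{v_m'}(t)$ at once.

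For part (2), fix $w<w_0$ with $w\ne 1$ and $t\in T$. The plan is to produce a simple root $\alpha$ such that $w\alpha$ is a \emph{positive non-simple} root and then force the vanishing by a character-mismatch argument. Granting such $\alpha$, set $h=\bx_\alpha(r)\in H_m$ for $r\in\CP^{-m}$, which is allowed by Lemma \ref{lemma21}(2). By Lemma \ref{lemma21}(1), $\psi_m(h)=\psi_U(h)=\psi(r)$. On the other hand $twh(tw)^{-1}=\bx_{w\alpha}(\pm(w\alpha)(t)\,r)$ lies in $U$ because $w\alpha>0$, and $\psi_U$ vanishes on it because $w\alpha$ is not simple (only the simple-root subgroups contribute to $\psi_U$). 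Two applications of Eq.(\ref{eq21}) then give
$$\psi(r)\,W_{v_m}(tw)=W_{v_m}(twh)=W_{v_m}\bigl(twh(tw)^{-1}\cdot tw\bigr)=W_{v_m}(tw).$$
Choosing $r\in\CP^{-m}$ with $\psi(r)\ne 1$, possible since $\psi$ has conductor $\CO_F$ and $m\ge C\ge 1$, forces $W_{v_m}(tw)=0$. The same argument handles $W_{v_m'}$.

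The main obstacle is verifying the existence of $\alpha$ for every $w<w_0$ with $w\ne 1$. Realizing $\bW$ as signed permutations of $\{e_1,\ldots,e_n\}$, the only Weyl elements sending every simple root to $\pm$ a simple root are $1$ and the longest element $-I$; since $l(-I)=n^2>2n-1=l(w_0)$ for $n\ge 2$, we have $-I\not\le w_0$, and so $w$ must send at least one simple root to a non-simple root. A short case analysis---beginning with the action on $\beta=2e_n$ (which yields $w\beta=2e_j$ positive non-simple whenever $w(e_n)=e_j$ with $j<n$) and then handling the residual sign/permutation configurations through the images $w(\alpha_i)$---produces the desired simple root $\alpha$ with $w\alpha$ positive and non-simple.
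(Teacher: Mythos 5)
Part (1) is correct and is exactly the paper's argument: reduce to $t\in T$ via Eq.~(\ref{eq21}) and invoke Corollary \ref{cor25} together with $\omega_\pi=\omega_{\pi'}$.

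For part (2), your vanishing mechanism (conjugate $\bx_\alpha(r)\in H_m$ across $tw$, land in a positive non-simple root subgroup where $\psi_U$ is trivial, and compare characters) is precisely the paper's. The gap is in the combinatorial claim that such a simple root $\alpha$ exists for every $w<w_0$, $w\ne 1$. Your proposed reduction is off target: what must be ruled out is that $w$ sends \emph{every} simple root to either a simple root or to an arbitrary \emph{negative} root, not that $w$ sends every simple root to $\pm$ a simple root. These conditions are genuinely different: $w_0$ itself sends $\alpha_1=e_1-e_2$ to the negative non-simple root $-e_1-e_2$ and fixes all other simple roots, so $w_0$ admits no good $\alpha$ even though it does not preserve $\pm\Delta$. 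This also shows the desired statement is false for $w=w_0$ and for other ``Bessel-relevant'' elements (e.g.\ $w_l$), so any correct case analysis must make essential use of the strict Bruhat inequality $w<w_0$, i.e.\ that $w$ is a proper subword of $s_{\alpha_1}\cdots s_{\alpha_{n-1}}s_\beta s_{\alpha_{n-1}}\cdots s_{\alpha_1}$. You assert that ``a short case analysis'' produces $\alpha$ but do not carry it out, and the reduction you do supply neither implies nor is implied by what is needed. The paper closes this exact point by citing Proposition 3.2 of \cite{CPS}, whose hypothesis it verifies by computing that $w_lw_0$ is the long Weyl element of the Levi $M_{w_0}\cong\GL_1\times\Sp_{2n-2}$ of a maximal parabolic containing $B$; you should either reproduce that citation or actually perform the signed-permutation case analysis using $w<w_0$.
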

\begin{proof}
(1) This follows from Corollary \ref{cor25} and the fact that $\pi$ and $\pi'$ have the same central character.\\
(2) We claim that for all $w<w_0$ and $w\ne 1$, there exists a simple root $\gamma$ such that $w(\gamma)$ is positive but not simple. We first show that this claim implies $W_{v_m}(tw)=0=W_{v_m'}(tw)$ for all $m\ge C,t\in T$, $w\le w_0$ and $w\ne 1$. In fact, suppose that $\gamma$ is a simple root but $w(\gamma)$ is a positive but non-simple root. Take $r\in \CP^{-m}$, we have $\bx_{\gamma}(r)\in U_m$. From the relation
$$tw \bx_{\gamma}(r)=\bx_{w(\gamma)}(\gamma(t)r) tw,$$
and Eq.(\ref{eq21}), we get
$$ \psi_m(\bx_{\gamma(r)})W_{v_m}(tw)=\psi_U(\bx_{w(\gamma)}(\gamma(t)r))W_{v_m}(tw).$$
Since $w(\gamma)$ is not simple, we get $\psi_U(\bx_{w(\gamma)}(\gamma(t)r))=1 $. It is clear that $ \psi_m(\bx_{\gamma(r)})=\psi(r)$. Then we get $(\psi(r)-1)W_{v_m}(tw)=0$. Since $\psi$ is a nontrivial additive character with conductor $\CO$, we can choose $r\in \CP^{-m}$ such that $\psi(r)\ne 1$. Thus $W_{v_m}(tw)=0$. The same argument shows that $W_{v_m'}(tw)=0$.

Next, we prove the claim. By Proposition 3.2 of \cite{CPS}, it suffices to show that $w_lw_0$ is the long Weyl element of the Levi subgroup $M_{w_0}$ of a maximal parabolic subgroup $P_{w_0}\supset B$, where $$w_l=\begin{pmatrix} &J_n\\ -J_n& \end{pmatrix}$$ is the long Weyl element of $\Sp_{2n}$. We have 
$$w_lw_0=\begin{pmatrix}-1& &&\\ &&J_{n-1}&\\ &-J_{n-1}&&\\ &&&-1 \end{pmatrix},$$
which is the long Weyl element of $M_{w_0}\cong \GL_1\times \Sp_{2n-2}$. It is clear that the corresponding parabolic subgroup $P_{w_0}$ is a maximal parabolic subgroup. This proves the claim and hence the lemma.
\end{proof}

\begin{cor}\label{cor313} Suppose that the field $F$ has odd residue characteristic.
\begin{enumerate}
\item Given $w\in \bW$ with $w<w_0$, and $m\ge 4^{l(w)^2}C$, we have 
$$W_{v_m}(g)=W_{v_m'}(g),$$
for all $g\in BwB$.
\item For $m\ge 4^{l(w_0)^2}C$ and $u\in U_{w_0}^--U_{w_0,m}^-$, we have 
$$W_{v_m}(tw_0u)=W_{v_m'}(tw_0u),$$
for all $t\in T$.
\end{enumerate}
\end{cor}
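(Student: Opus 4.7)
The plan is to deduce both parts of the corollary directly from Theorem~\ref{thm311}, using Lemma~\ref{lemma312} to supply its hypotheses. Throughout I would take the sequence $a_t = t^2$, so that $a_0 = 0$ and $a_t = t^2 \ge t^2 - t + 1 = t + a_{t-1}$ for every $t \ge 1$, which is the admissibility condition of Theorem~\ref{thm311}. Under this choice $4^{a_{l(w)}}C = 4^{l(w)^2}C$, matching the bounds in the statement.

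For part (1), I would invoke Theorem~\ref{thm311}(2) at the given element $w < w_0$. The hypothesis to verify is that $W_{v_k}(tw') = W_{v_k'}(tw')$ for every $w' \le w$, every $t \in T$, and every $k \ge 4^{l(w')^2}C$. If $w' = 1$, then $l(w') = 0$ and we only need the identity on $T$ (actually all of $B$) for $k \ge C$, which is Lemma~\ref{lemma312}(1). If $w' \ne 1$, then $w' \le w < w_0$ forces $w' < w_0$ with $w' \ne 1$, and Lemma~\ref{lemma312}(2) yields $W_{v_k}(tw') = 0 = W_{v_k'}(tw')$ for all $k \ge C$, hence in particular for $k \ge 4^{l(w')^2}C$. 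Theorem~\ref{thm311}(2) then produces equality on $BwB$ for $m \ge 4^{l(w)^2}C$, which is (1).

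For part (2), I would apply Theorem~\ref{thm311}(1) with $w = w_0$. The hypothesis to check is the same identity $W_{v_k}(tw') = W_{v_k'}(tw')$, but now only for strict $w' < w_0$. The verification is identical to the previous paragraph: $w' = 1$ is covered by Lemma~\ref{lemma312}(1), and every $w'$ with $1 \ne w' < w_0$ is covered by Lemma~\ref{lemma312}(2) (both sides vanish). Theorem~\ref{thm311}(1) then delivers $W_{v_m}(tw_0 u) = W_{v_m'}(tw_0 u)$ for every $t \in T$ and every $u \in U_{w_0}^- - U_{w_0,m}^-$ as soon as $m \ge 4^{l(w_0)^2}C$.

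Since all the real work is already in Theorem~\ref{thm311} (the stability step using partial Bessel functions) and Lemma~\ref{lemma312} (the vanishing on $Tw'$ for intermediate Weyl elements, which rests on the fact that $w_l w_0$ is the long Weyl element of the Levi of a maximal parabolic), I do not expect any genuine obstacle in this deduction. The only care required is the bookkeeping of the growth rate $a_t = t^2$ so that the exponent $4^{l(w')^2}$ appearing in the hypothesis of Theorem~\ref{thm311} exactly matches the bound $4^{l(w)^2}C$ in the conclusion of the corollary.
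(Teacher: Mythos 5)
Your proposal is correct and follows exactly the route the paper intends: the paper's own proof of Corollary \ref{cor313} is a one-line appeal to Theorem \ref{thm311} and Lemma \ref{lemma312}, and you have simply (and correctly) filled in the bookkeeping with $a_t=t^2$ and the case split $w'=1$ versus $1\ne w'<w_0$.
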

\begin{proof}
This is a direct consequence of Theorem \ref{thm311} and Lemma \ref{lemma312}
\end{proof}
We remark that Corollary \ref{cor313} is the key to prove our main theorem, see the proof of Theorem \ref{thm44}.

\section{Stability of $\gamma$-factors}
\subsection{Howe vectors for the Weil representations of $\widetilde \SL_2(F)$}
Given an unramified additive character $\psi$ of $F$, recall that we have a Weil representation $\omega_{\psi^{-1}}$ of $\widetilde \SL_2(F)$ on $\CS(F)$. For an integer $m>0$, let $\phi_m\in \CS(F)$ be the characteristic function of $\CP^{(2n-1)m}$, which will play the role of Howe vectors for the Weil representations. 

\begin{lem}\label{lemma41}
Suppose the residue characteristic of $F$ is odd. We have $$\omega_{\psi^{-1}}(\bn_1(b))\phi^m=\phi^m, \textrm{ for } b\in \CP^{-(4n-3)m}$$ and 
$$\omega_{\psi^{-1}}(\bar \bn_1(b))\phi^m=\phi^m, \textrm{ for } b\in \CP^{(4n-1)m}.$$ Here by abuse notation, we do not distinguish an element $g\in \SL_2(F)$ with $(g,1)\in \widetilde \SL_2(F)$.
\end{lem}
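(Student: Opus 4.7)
The two statements can be handled directly from the explicit action formulas for $\omega_{\psi^{-1}}$ recalled in \S1, together with the fact that the additive character $\psi$ has conductor $\CO$ and the residue characteristic is odd (so that $2$ is a unit of $\CO$ and the factor of $2$ appearing in $\hat f(x)=\int_F f(y)\psi(2xy)dy$ does not shift the conductor). The plan is: prove the $\bn_1$ statement directly from the multiplication-by-a-character formula, then reduce the $\bar\bn_1$ statement to the $\bn_1$ one via the conjugation identity $\bar\bn_1(b)=w^1\bn_1(-b)(w^1)^{-1}$, using that $\omega_{\psi^{-1}}(w^1)$ is, up to the Weil index $\gamma(\psi^{-1})$, a Fourier transform that preserves the family of scaled characteristic functions $\mathbb{1}_{\CP^j}$.

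For the first assertion, since $\omega_{\psi^{-1}}(\bn_1(b))\phi^m(\xi)=\psi(b\xi^2)\phi^m(\xi)$ (obtained by reading the third Weil displayed formula with $\psi$ replaced by $\psi^{-1}$), I need $\psi(b\xi^2)=1$ for every $\xi$ in the support $\CP^{(2n-1)m}$ of $\phi^m$. Such a $\xi$ satisfies $\xi^2\in\CP^{(4n-2)m}$, so the condition is $b\in\CP^{-(4n-2)m}$, which follows from the stronger hypothesis $b\in\CP^{-(4n-3)m}$. This finishes the first claim with one $m$ to spare.

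For the second assertion, I would first compute $\omega_{\psi^{-1}}(w^1)\phi^m$. With $\psi$ replaced by $\psi^{-1}$ in the Fourier transform, a direct integration shows
\[
\omega_{\psi^{-1}}(w^1)\phi^m \;=\; \gamma(\psi^{-1})\,\vol(\CP^{(2n-1)m})\,\mathbb{1}_{\CP^{-(2n-1)m}},
\]
using the odd residue characteristic to get rid of the factor of $2$. Writing $\bar\bn_1(b)=w^1\bn_1(-b)(w^1)^{-1}$ and applying the three factors in turn, the middle application of $\omega_{\psi^{-1}}(\bn_1(-b))$ multiplies a function supported on $\CP^{-(2n-1)m}$ by $\psi(-b\xi^2)$; on that support $\xi^2\in\CP^{-(4n-2)m}$, so the multiplier is trivial as soon as $b\in\CP^{(4n-2)m}$, which is implied by the hypothesis $b\in\CP^{(4n-1)m}$. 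Finally, $\omega_{\psi^{-1}}(w^1)$ applied to $\mathbb{1}_{\CP^{-(2n-1)m}}$ returns $\phi^m$ (up to the inverse of the scalar in the first step), giving the desired identity.

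The main technical point will be bookkeeping the metaplectic cocycle and the scalar factors. One must check that the two applications of $\omega_{\psi^{-1}}(w^1)$ (and the implicit lift of $(w^1)^{-1}$, which differs from $w^1$ by $\bm_1(-1)$) combine to give the identity on the one-dimensional family of scaled characteristic functions appearing above. This amounts to recording that $\gamma(\psi^{-1})$ and the appropriate Weil index $\mu_{\psi^{-1}}$ at $-1$ cancel, and that the volumes in the Fourier inversion match after the normalization fixed in \S1 (which is why $\hat{\hat{f}}(x)=f(-x)$). Once those scalars are accounted for, the extra margin ($m$ rather than $0$ in each case) makes the final bounds $-(4n-3)m$ and $(4n-1)m$ automatic.
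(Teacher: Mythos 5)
Your proposal is correct and follows essentially the same route as the paper: the first identity is the direct multiplier computation on the support $\CP^{(2n-1)m}$, and the second is obtained by conjugating $\bar\bn_1(b)$ into $\bn_1(-b)$ by $w^1$, computing $\omega_{\psi^{-1}}(w^1)\phi_m$ explicitly as $\gamma(\psi^{-1})q_F^{-(2n-1)m}$ times the characteristic function of $\CP^{-(2n-1)m}$, and repeating the multiplier argument there. The scalar and cocycle bookkeeping you flag is handled in the paper simply by the fact that $\omega_{\psi^{-1}}((w^1)^{-1})\omega_{\psi^{-1}}(w^1)$ is the identity, so no explicit cancellation of $\gamma(\psi^{-1})$ or $\mu_{\psi^{-1}}(-1)$ is needed.
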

\begin{proof}
 For $x\in F$, we have
$$\omega_{\psi^{-1}}(\bn_1(b))\phi_m(x)=\psi(bx^2)\phi_m(x).$$
For $x\in \Supp(\phi^m)=\CP^{(2n-1)m}$ and $b\in \CP^{-(4n-3)m}$, we have $bx^2\in \CP^m\subset \CO$, and thus $\psi(bx^2)=1$. Now it is clear that $\omega_{\psi^{-1}}(u)\phi_m=\phi_m$. 

To prove the second formula, we write $\bar \bn_1(b)=(w^1)^{-1} \bn_1(-b)w^1 $, with $b\in \CP^{(4n-1)m}$. Denote $\phi_m'=\omega_{\psi^{-1}}(w^1)\phi_m$. We have
\begin{align*}
\phi_m'(x)&=\omega_{\psi^{-1}}(w^1)\phi_m(x)\\
&=\gamma(\psi^{-1})\int_F \phi_m(y)\psi^{-1}(2xy)dy\\
&=\gamma(\psi^{-1}) \int_{\CP^{(2n-1)m}}\psi^{-1}(2xy)dy\\
&=\gamma(\psi^{-1})q_F^{-(2n-1)m}\textrm{Char}_{\CP^{-(2n-1)m}}(x),
\end{align*}
where $\textrm{Char}_{\CP^{-(2n-1)m}}$ is the characteristic function of $\CP^{-(2n-1)m}$. In the above A similar argument as above shows that $\omega_{\psi^{-1}}(\bn_1(-b))\phi_m'=\phi_m'$ for $b\in \CP^{(4n-1)m}$. Thus we get
$$ \omega_{\psi^{-1}}(\bar \bn_1(b))\phi_m'= \omega_{\psi^{-1}}((w^1)^{-1}) \omega_{\bn_1(-b)}\phi_m'= \omega_{\psi^{-1}}((w^1)^{-1})\phi_m'=\phi_m.$$
This finishes the proof of the lemma.
\end{proof}

\subsection{Sections of genuine induced representations of $\widetilde \SL_2(F)$} In this subsection, we construct some sections of genuine induced representations of $\widetilde \SL_2(F)$, which will be used in the proof of stability of $\gamma$-factors for $\Sp_{2n}$. The same constructions has been used in \cite{ChZh} to get a local converse theorem for $\SL_2$.

Note that $\bar U^1(F)$ and $U^1(F)$ splits in $\Mp_2(F)$.
Moreover, for $g_1\in U^1$ and $g\in \bar U^1$ we have $c(g_1,g_2)=1$.
In fact, if $g_1=\bn_1(y)$ and $g_2=\bar \bn(x)$ with $x\ne 0$, we have
$\bx(g_1)=1$ and $\bx(g_2)=x$, and thus
$$c(g_1,g_2)=( 1,x)_F(-x, x)_F=1.$$
This shows that $\bar U^1 \cdot  U^1\subset \SL_2(F),$ where
$\SL_2(F)$ denotes the subset of $\widetilde \SL_2(F)$ which
consists elements of the form $(g,1)$ for $g\in \SL_2(F)$.

For a positive integer $i$, we denote $$U^1_i=\begin{pmatrix}1& \CP^{-i} \\ &1\end{pmatrix}, \textrm{ and } \bar U^1_i=\begin{pmatrix}1& \\ \CP^{3i}&1 \end{pmatrix} .$$
Note that $U^1_i=U^1\cap H_i$ and $\bar U^1_i= \bar U^1\cap H_i$, where we view $U^1_i$ and $\bar U^1_i$ as a subgroup of $\Sp_{2n}$ by the standard embedding $\SL_2\incl \Sp_{2n}$.

Let $X$ be an open compact subgroup of $U^1$. For $x\in X$ and
$i>0$, we consider the set $A(x,i)=\wpair{\bar u\in \bar U^1: \bar u x\in B^1 \cdot \bar U^1_{i}}$, where $B^1$ is the upper triangular Borel of $\SL_2$. Note that the definition of $A(x,i)$ makes sense because $ \bar U^1\cdot U^1\subset \SL_2$, as we showed above.
\begin{lem}\label{lemma42}
\begin{enumerate}
\item For any positive integer $c$, there exists an integer $i_1=i_1(X,c)$ such that for all
$i\ge i_1$, $x\in X$ and $ \bar u\in A(x,i)$, we have
$$\bar u x=u \bm_1(a) \bar u_0$$
with $u\in U^1, \bar u_0 \in \bar  U^1_i$ and $a\in 1+\CP^c$.
\item There exists an integer $i_0=i_0(X)$ such that for all $i\ge i_0$, we have $A(x,i)=\bar U^1_{i}$
for all $i\ge i_1$.
\end{enumerate}
\end{lem}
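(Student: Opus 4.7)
The plan is to reduce everything to an explicit Bruhat decomposition in $\SL_2(F)$ and then estimate valuations. Writing $\bar u = \bar\bn_1(b)$ and $x = \bn_1(y)$, a direct matrix computation shows that whenever $by+1 \ne 0$,
\begin{equation*}
\bar\bn_1(b)\,\bn_1(y) \;=\; \bn_1\!\left(\tfrac{y}{by+1}\right)\,\bm_1\!\left(\tfrac{1}{by+1}\right)\,\bar\bn_1\!\left(\tfrac{b}{by+1}\right),
\end{equation*}
so $\bar u \in A(x,i)$ amounts to the single condition $b/(by+1)\in\CP^{3i}$, and the diagonal entry to be controlled in part (1) is $a = 1/(by+1)$. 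Since $X$ is a compact open subgroup of $U^1(F)$, I would first fix $N=N(X)$ with $X\subset\bn_1(\CP^{-N})$, so that every $y$ appearing satisfies $\val(y)\ge -N$ (or $y=0$, in which case $\bar u x = \bar u$ and the statements are trivial).

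For part (2), I would prove the nontrivial inclusion $A(x,i)\subset \bar U^1_i$ by distinguishing three cases according to the sign of $\val(b)+\val(y)$. If $\val(b)>-\val(y)$ then $by\in\CP$, so $by+1\in\CO^\times$, and $\val(b/(by+1))=\val(b)$; the hypothesis gives $\val(b)\ge 3i$ directly. If $\val(b)<-\val(y)$ then $\val(by+1)=\val(by)$ and cancellation yields $\val(b/(by+1))=-\val(y)\le N$, which is $<3i$ as soon as $i>N/3$, a contradiction. The remaining case $\val(b)=-\val(y)$ can only give $\val(b/(by+1))\le -\val(y)\le N$, again contradicting $\ge 3i$ once $i>N/3$. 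Thus choosing $i_0 = i_0(X)$ with $3i_0>N$ forces the first case, giving $\val(b)\ge 3i$, i.e.\ $\bar u\in \bar U^1_i$. The reverse inclusion $\bar U^1_i\subset A(x,i)$ is immediate from the same case analysis: if $\val(b)\ge 3i>N\ge -\val(y)$ then $by+1$ is a unit and $\val(b/(by+1))=\val(b)\ge 3i$.

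For part (1), by part (2) the condition $\bar u\in A(x,i)$ for $i\ge i_0$ already guarantees $\val(b)\ge 3i$ and hence $by+1\in 1+\CP^{3i-N}$ a unit. Then
\begin{equation*}
a - 1 \;=\; \frac{1}{by+1}-1 \;=\; \frac{-by}{by+1},
\end{equation*}
so $\val(a-1)=\val(by)\ge 3i-N$. Choosing $i_1 = i_1(X,c) := \max(i_0,\lceil (c+N)/3\rceil)$ ensures $3i-N\ge c$, giving $a\in 1+\CP^c$, while the formula for the Bruhat decomposition shows $\bar u_0 = \bar\bn_1(b/(by+1))\in \bar U^1_i$ automatically.

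The only delicate point is the case analysis in part (2), specifically ruling out the ``resonance'' possibility $by\equiv -1\pmod \CP$; but the compactness of $X$ pins down a uniform lower bound $-N$ for $\val(y)$, so that once $i$ exceeds $N/3$ this resonance cannot reconcile with $\val(b/(by+1))\ge 3i$. I do not expect any genuine obstacle beyond organizing the valuation bookkeeping cleanly.
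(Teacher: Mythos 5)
Your proposal is correct and follows essentially the same route as the paper: both rest on the explicit Bruhat decomposition of $\bar\bn_1(b)\bn_1(y)$ in $\SL_2$ together with the uniform lower bound $\val(y)\ge -N$ coming from compactness of $X$. The only difference is organizational --- you prove (2) first by a case analysis on $\val(b)+\val(y)$ and then deduce (1), whereas the paper proves (1) first and gets the inclusion $A(x,i)\subset\bar U^1_{i}$ from the relation $ay=\bar y$ with $a$ a unit --- but the computations are the same.
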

\begin{proof}
 Since $X$ is compact, there is a constant $C$ such that $|x|<C$ for all $\bn_1(x)\in X\subset N$.

For $\bn_1(x)\in X, \bar \bn_1(y)\in A(\bn_1(x),i)$, we have $\bar \bn_1(y)\bar n_1(x) \in B^1 \cdot \bar U^1_{i}$, thus we can
assume that $$\bar \bn_1(y) \bn_1(x)=\begin{pmatrix}a& b\\ & a^{-1} \end{pmatrix} \bar \bn_1(\bar y)$$ for
$a\in F^\times, b\in F$ and $\bar y \in \CP^{3i}$. Rewrite the above expression as
$$\bar \bn_1 (-y) \begin{pmatrix} a& b \\ & a^{-1} \end{pmatrix}= \bn_1(x)\bar \bn_1(-\bar y),$$
or

$$\begin{pmatrix}a & b\\ -ay & a^{-1}-by \end{pmatrix}=\begin{pmatrix}1-x\bar y & x\\ -\bar y &1 \end{pmatrix}.$$
Thus we get $$a=1-x\bar y, ay=\bar y.$$
Since $|x|<C$ and $\bar y\in \CP^{3i}$, it is clear that for any positive integer $c$, we can
choose $i_1(X,c)$ such that $a=1-x\bar y\in 1+\CP^c$ for all $ \bn_1(x)\in X$ and $\bar \bn(y)\in A(\bn_1(x),i)$. This proves (1).

If we take $i_0(X)=i_1(X,1)$, we get $a\in 1+\CP\subset \CO^\times$ for $i\ge i_0$. From $ay=\bar y$,
we get $y\in \CP^{3i}$. Thus we get that for $i\ge i_0(X)$, we have $ \bar \bn_1(y)\in \bar U^1_{i}$, i.e.,
$A(x,i)\subset \bar U^1_{i}$.
The other direction inclusion can be checked similarly if $i$ is large. We omit the details.
\end{proof}

Now let $\eta$ be a quasi-character of $F^\times$. Given a positive integer $i$ and a complex number $s\in \BC$, we
consider the following function $f^i_s$ on $\widetilde \SL_2(F)$:
$$f_s^i((g,\zeta))=\left\{\begin{array}{lll}\zeta \mu_{\psi^{-1}}(a)^{-1}\eta_{s+1/2}(a),& \textrm{ if } g=
\left(\begin{pmatrix}a & b\\ & a^{-1}\end{pmatrix}, \zeta \right)  \bar \bn_1(x), \textrm{ with }\\
& \quad a\in F^\times, b\in F, \zeta\in \mu_2, x\in \CP^{3i}, \\ 0, &\textrm{ otherwise.} \end{array}\right.$$

\begin{lem}\label{lemma43} Suppose that the residue characteristic of $F$ is odd.
\begin{enumerate}
\item There exists an integer $i_2(\eta)$ such that for all $i\ge i_2(\eta)$, $f_s^i$ defines a section
in $\tilde I(s,\eta,\psi^{-1})$.
\item Let $X$ be an open compact subset of $U^1$, then there exists an integer $I(X,\eta)\ge i_2(\eta)$
such that for all $i\ge I(X,\eta)$, we have
$$\tilde f_s^i(w^1 x)=\vol(\bar U^1_{i})=q_F^{-3i},$$
for all $x\in X$, where $\tilde f_s^i=M_s(f_s^i)$.
\end{enumerate}
\end{lem}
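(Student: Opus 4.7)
The plan is to verify assertion (1) by bookkeeping with the Rao cocycle on the big cell, and then to reduce (2) to an explicit integral whose integrand trivializes once $i$ is chosen large enough relative to the data $X$ and $\eta$.

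For (1), the support of $f_s^i$ is $\tilde B^1 \cdot \bar U^1_i \cdot \mu_2$, and the Bruhat decomposition on the big cell $B^1 \bar U^1 \subset \SL_2(F)$ is unique; the lift to $\widetilde \SL_2(F)$ is pinned down by taking $\bar \bn_1(x)$ to mean $(\bar \bn_1(x), 1)$, using the splitting of $\bar U^1$ noted before Lemma \ref{lemma42}. The left-equivariance under $\tilde B^1$ reduces to the cocycle identity $\mu_{\psi^{-1}}(a_1)\mu_{\psi^{-1}}(a_2) = \mu_{\psi^{-1}}(a_1 a_2)(a_1,a_2)_F$ together with the direct computation $c((\bm_1(a),\zeta),\bar \bn_1(x)) = (a,x)_F$ from the Rao formula. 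Smoothness holds once $i_2(\eta)$ is chosen so that $1+\CP^{3i_2}$ is contained in $\ker(\eta) \cap (F^\times)^2$, the latter inclusion being provided by Lemma \ref{lemma24}; right-invariance of $f_s^i$ under $\bar U^1_i$ is then immediate from the support description.

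For (2), we compute
$$\tilde f_s^i(w^1 x) = \int_F f_s^i\!\left(\bigl((w^1)^{-1}\bn_1(b), 1\bigr)\cdot(w^1 x, 1)\right) db.$$
Writing $x = \bn_1(y)$ and using $(w^1)^{-1}\bn_1(b)w^1 = \bar \bn_1(-b)$, the underlying $\SL_2$-element is $\bar \bn_1(-b)\bn_1(y)$, and the attached cocycle $c_1 = c((w^1)^{-1}\bn_1(b), w^1 x)$ evaluates to $1$ by direct application of the formula for $\bx$. Performing the Iwasawa decomposition $\bar \bn_1(-b)\bn_1(y) = \bn_1(b_0)\bm_1(a_0)\bar \bn_1(x_0)$ yields $a_0 = (1-by)^{-1}$ and $x_0 = -b/(1-by)$, so the support of the integrand is the set of $b$ with $x_0 \in \CP^{3i}$.

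The key technical step is showing this support equals $\CP^{3i}$ for $i \ge I(X,\eta)$. Since $X$ is compact, $|y| \le q_F^M$ for some $M$; a case analysis on $|by|$ via the ultrametric inequality shows that $|by| \ge 1$ forces $|x_0| \ge 1/|y| \ge q_F^{-M}$, which exceeds $q_F^{-3i}$ for $i$ large, so such $b$ cannot contribute. Hence only $b \in \CP^{3i}$ matters, on which $|by| < 1$, $|1-by|=1$, and $a_0 \in 1 + \CP^{3i-M}$. Choosing $I(X,\eta)$ large enough that $1 + \CP^{3I(X,\eta)-M} \subset \ker(\eta)\cap(F^\times)^2$ (using Lemma \ref{lemma24}) gives $\eta_{s+1/2}(a_0) = \mu_{\psi^{-1}}(a_0) = 1$ and the Hilbert-symbol correction $(a_0,x_0)_F = 1$, so the integrand is identically $1$ on $\CP^{3i}$ and $\tilde f_s^i(w^1 x) = q_F^{-3i}$. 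The main obstacle throughout is the careful tracking of the Rao cocycle and Hilbert-symbol corrections through the Iwasawa decomposition to confirm that no nontrivial sign survives in the range $b \in \CP^{3i}$.
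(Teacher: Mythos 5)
Part (2) of your proposal is correct and is in substance the paper's own argument carried out by hand: the paper packages the support computation ($(w^1)^{-1}\bn_1(b)w^1\in A(x,i)=\bar U^1_i$, hence $b\in\CP^{3i}$) and the statement $a_0\in 1+\CP^c$ into Lemma \ref{lemma42}, while you rederive the Iwasawa decomposition $a_0=(1-by)^{-1}$, $x_0=-b/(1-by)$ explicitly; both routes give the same conclusion, and your explicit tracking of the cocycle $c((w^1)^{-1}\bn_1(b),w^1x)=1$ matches what the paper uses implicitly.

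Part (1), however, has a genuine gap. To show $f_s^i\in\tilde I(s,\eta,\psi^{-1})$ you must verify that $f_s^i$ is a \emph{smooth} vector, i.e.\ invariant under right translation by some open compact subgroup of $\widetilde\SL_2(F)$. Right-invariance under $\bar U^1_i$ does not deliver this: $\bar U^1_i$ is compact but not open in $\widetilde\SL_2(F)$, and the condition $1+\CP^{3i_2}\subset\ker(\eta)\cap(F^\times)^2$ by itself says nothing about how $f_s^i$ behaves under right translation by small elements of $A$ or, more delicately, of $U^1$. This last case is exactly where the work lies: right translation by $\bn_1(\epsilon)$ with $\epsilon$ small moves a point $b\cdot\bar u_0$ of the support to $b\cdot\bar u_0\bn_1(\epsilon)$, and one must check both that this stays in (or off) the support $\tilde B^1\bar U^1_i\mu_2$ and that the torus coordinate only changes by an element of $1+\CP^c$ on which $\eta$ and $\mu_{\psi^{-1}}$ are trivial. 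The paper's proof of (1) is devoted almost entirely to this verification, via the Iwahori decomposition $K^1_{4i}=(U^1\cap K^1_{4i})(A\cap K^1_{4i})(\bar U^1\cap K^1_{4i})$ together with Lemma \ref{lemma42} applied to $X=U^1\cap K^1_c$ — which is also why the paper's threshold $i_2(\eta)$ involves the constants $i_0(U^1\cap K^1_c)$ and $i_1(U^1\cap K^1_c,c)$ and not only the conductor of $\eta$. Your argument never performs this check. (The gap is repairable — either by the paper's route, or by observing that $\tilde B^1\bar U^1_i\mu_2$ is clopen, that $f_s^i$ is locally constant on it because the big-cell coordinates are continuous and the character is locally constant, and that a left-equivariant locally constant function is automatically smooth since $\tilde B^1\backslash\widetilde\SL_2$ is compact — but as written the smoothness claim is unsupported.)
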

Recall that $w=\begin{pmatrix}&1\\ -1& \end{pmatrix}$.
\begin{proof}
(1) From the definition, it is clear that $$f_s^i\left(
\left(\begin{pmatrix} a&b\\ & a^{-1}\end{pmatrix}, \zeta\right)
\tilde g \right)=\zeta\mu_{\psi^{-1}}(a)^{-1}\eta_{s+1/2}(a)f_s^i(\tilde
g),$$ for $a\in F^\times, b\in F, \zeta \in \mu_2, $ and $\tilde
g\in \widetilde \SL_2(F)$. It suffices to show that for $i$ large,
there is an open compact subgroup $\widetilde H_i\subset \widetilde
\SL_2(F)$ such that $f_s^i(\tilde g \tilde h)=f_s^i (\tilde g)$ for
all $\tilde g\in \widetilde \SL_2(F),$ and $\tilde h\in \widetilde
H_i$.

If $\psi$ is unramified and the residue characteristic is not 2 as we assumed, the character
$\mu_{\psi^{-1}}$ is trivial on $\CO_F^\times$, see \cite{Sz2} for example.

Let $c$ be a positive integer such that $ \eta$ is trivial on $1+\CP^c$. Denote $K_c^1=1+\Mat_{2\times 2}(\CP^c)$, the standard congruence subgroup of $\SL_2(F)$. Let $i_2(\eta)=\wpair{c, i_0(U^1\cap K^1_c ), i_1(U^1\cap K^1_c, c) }.$ For $i\ge i_2(\eta)$, we take
$\widetilde H_i= K^1_{4i}=1+M_2(\CP^{4i})$. Note that the double cover map $\widetilde \SL_2\ra \SL_2$ splits over $K^1_{4i}$, and thus we can view $K_{4i}^1$
as a subgroup of $\widetilde \SL_2$. We now check that for $i\ge i_2(\eta)$, we have
$f_s^i(\tilde g h)=f_s(\tilde g)$ for all $\tilde g\in \widetilde \SL_2$ and $h\in K_{4i}$.
We have the Iwahori decomposition $K^1_{4i}=(U^1\cap K_{4i})(A\cap K_{4i})(\bar U^1\cap K_{4i})$.

For $h\in \bar U^1\cap K^1_{4i}\subset \bar U^1_i$, it is clear that $f_s^i(\tilde g h)=f_s^i(\tilde g)$
by the definition of $f_s^i$. 

Now we take $h\in A\cap K^1_{4i}$. Write $h=\bm_1(a_0)$, with $a_0\in 1+\CP^{4i}$.
We have $\bar \bn_1(x) h= h \bar \bn_1(a_0^{-2} x)$. It is clear that $x\in \CP^{3i}$ if and only if $a_0^{-2}x\in \CP^{3i}$.
On the other hand, for any $a\in F^\times, b\in F$, we have
$$c\left( \begin{pmatrix} a& b\\ & a^{-1} \end{pmatrix}, \bm_1(a_0) \right)=(a^{-1},a_0^{-1})=1,$$
since $a_0\in 1+\CP_F^{4i}\subset F^{\times, 2}$ by Lemma \ref{lemma24}. Thus
we get
$$ \left( \begin{pmatrix} a& b\\ & a^{-1} \end{pmatrix}, \zeta \right)\bar \bn_1(x)h=
\left( \begin{pmatrix} aa_0& ba_0^{-1}\\ & a^{-1}a_0^{-1} \end{pmatrix}, \zeta \right)\bar \bn_1(a_0^{-1}x). $$
By the definition of $f_s^i$, if $x\in \CP^{3i}$, for $g=\left(\begin{pmatrix} a& b\\ & a^{-1} \end{pmatrix},
\zeta \right)\bar \bn_1(x)$ we get
$$f_s^i(g h)=\zeta\mu_{\psi^{-1}}(aa_0)^{-1}\eta_{s+1/2}(aa_0)=\zeta \mu_{\psi^{-1}}(a)^{-1}\eta_{s+1/2}(a)=f_s^i(g),$$
by the assumption on $i$.

Finally, we consider $h\in  U^1\cap K^1_{4i}\subset  U^1\cap K_c$. By assumption on $i$, we get
$$A(h,i)=A(h^{-1},i)=\bar U^1_i.$$
In particular, for $\bar u\in \bar U^1_i$, we have $ \bar u h \in B^1\cdot \bar U^1_i$ and
$\bar u h^{-1}\in B^1\cdot \bar U^1_i$. Now it is clear that $\tilde g\in \widetilde B^1\cdot \bar U^1_i$
if and only if $\tilde g h\in \widetilde B^1\cdot \bar U^1_i$. Thus $f_s^i(\tilde g)=0$ if and only if
$f_s^i(\tilde gh)=0$. Moreover, for $\bar u \in \bar U^1_i$, we have
$$\bar u h=\begin{pmatrix}a_0 & b_0\\ & a_0^{-1} \end{pmatrix}\bar u_0,$$
for $a_0\in 1+\CP^c$, $b_0\in F$ and $\bar u_0\in \bar U^1_i$. Thus for $\tilde g=
\left(\begin{pmatrix}a&b \\ & a^{-1}  \end{pmatrix}, \zeta\right) \bar u$ with $\bar u \in \bar U^1_i$,
we get
$$\tilde gh =\left( \begin{pmatrix}aa_0 & ab_0+a_0^{-1}b \\ & a_0^{-1}a^{-1} \end{pmatrix},\zeta \right)
\bar u_0.$$
Here we used the fact that $a_0\in 1+\CP^c$ is a square, and thus
$$c\left(\begin{pmatrix}a& b\\ & a^{-1} \end{pmatrix},\begin{pmatrix}a_0 & b_0\\ & a_0^{-1} \end{pmatrix}  \right)=1.$$

Since $\mu_{\psi^{-1}}(a_0)=1$, $(a,a_0)=1$ and $\eta_{s+1/2}(a_0)=1$, we get
$$f_s^i(\tilde g h)=f_s^i(g).$$
This finishes the proof of (1).

(2)   As in the proof of (1), let $c$ be a positive integer such that $\eta$ is trivial on $1+\CP^c$.
Take $I(X,\eta)=\max\wpair{i_1(X,c), i_0(X)}$. We have $$\tilde f_s^i(w^1 x)=\int_{F} f_s^i(((w^1)^{-1} \bn_1(b),1) w^1 x)db=\int_{F}f_s^i( (w^1)^{-1}\bn_1(b)w^1x,1)db.$$
By the definition of $f_s^i$, $f_s^i((w^{1})^{-1} \bn_1(b) w^1 x)\ne 0$ if and only if $(w^1)^{-1} \bn_1(b) w^1 x\in B^1\bar U^1_{i}$,
if and only if $(w^1)^{-1} \bn_1(b) w^1 \in A(x,i)=\bar U^1_{i}$ for all $i\ge I(X),$ and $ x\in X$.
On the other hand, if $(w^1)^{-1} \bn_1(b) w^1\in A(x,i)$, we have
$$(w^1)^{-1} \bn_1(b) w^1 x= \begin{pmatrix} a & b_1\\ & a^{-1}\end{pmatrix}\bar u_0, $$
with $a\in 1+\CP^c$ by Lemma \ref{lemma42}. Thus
$$f_s^i( (w^1)^{-1} \bn_1(b) w^1x)=\eta_{s+1/2}(a)\mu_{\psi^{-1}}(a)=1.$$ Now it is clear that
$$\tilde f_s^i(w^1x)=\vol(\bar U^1_i) =q_F^{-3i}.$$
\end{proof}

\subsection{The stability of gamma factors for generic representations of $\Sp_{2n}(F)$ when the characteristic of $F$ is odd}
Recall the notations from $\S$3.2. We assume that $F$ is a $p$-adic field with odd residue characteristic, $(\pi,V_\pi)$ and $(\pi',V_{\pi'})$ are two irreducible smooth $\psi_U$-generic representations of $\Sp_{2n}(F)$ with the same central character. We take $v\in V_{\pi}, v'\in V_{\pi'}$ such that $W_{v}(1)=1=W_{v'_1}$, and let $C=C(v,v')$ be an integer such that $v$ and $v'$ are fixed by $K_C$ under the action of $\pi$ and $\pi'$ respectively. Now we can state the main theorem of the paper:
\begin{thm}\label{thm44}
There is an integer $l=l(\pi,\pi')$ such that for any quasi-character $\eta$ of $F^\times$ with $\cond(\eta)>l$, we have
$$\gamma(s,\pi,\eta,\psi)=\gamma(s,\pi',\eta,\psi).$$
\end{thm}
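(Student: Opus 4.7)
The plan is to establish the stability by inserting carefully chosen test data into the functional equation of Proposition \ref{prop14} and comparing both sides across $\pi$ and $\pi'$. I would take $W = W_{v_m}$ for a Howe vector with $m \geq 4^{l(w_0)^2}C$ so that Corollary \ref{cor313} is available, $\phi = \phi_m$ as in Section 4.1, and $f_s = f_s^i$ with $i$ chosen large enough in terms of $\eta$, $m$, and the representations. The functional equation then reads
$$\Psi(W_{v_m},\phi_m,\tilde f_s^i) \;=\; \gamma(s,\pi,\eta,\psi)\, \Psi(W_{v_m},\phi_m, f_s^i),$$
and the goal is to show that both sides are invariant under the interchange of $(\pi, v_m) \leftrightarrow (\pi', v'_m)$ whenever $\cond(\eta)$ exceeds an explicit threshold $l=l(\pi,\pi')$.

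For $\Psi(W_{v_m},\phi_m,f_s^i)$, the section is supported on $B^1 \bar U^1_i$, so after Iwasawa decomposition only a small neighborhood of the identity in $U^1\setminus \SL_2$ contributes. On this neighborhood Lemma \ref{lemma41} gives $\omega_{\psi^{-1}}(\bar u)\phi_m = \phi_m$, and the inner $(y,x)$-integration is confined both by $\phi_m$ and by Lemma \ref{lemma26} to the region where $j(r(y,x)g)$ stays in the Borel subgroup $B$ of $\Sp_{2n}$. Corollary \ref{cor25} then reduces $W_{v_m}$ on that region to an evaluation of the central character, which $\pi$ and $\pi'$ share by hypothesis. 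Hence $\Psi(W_{v_m},\phi_m,f_s^i) = \Psi(W_{v'_m},\phi_m,f_s^i)$, and the common value is a nonzero explicit constant (which incidentally also proves assertion (2) of the proposition at the end of Section 1.3).

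For the intertwined side $\Psi(W_{v_m},\phi_m,\tilde f_s^i)$, Lemma \ref{lemma43}(2) computes $\tilde f_s^i(w^1 x) = q_F^{-3i}$ on a prescribed compact open. After a change of variables the $g$-integration shifts to a neighborhood of $w^1$, and because $j(w^1) = w_0$ the Whittaker function is being evaluated on the big Bruhat cell $Bw_0 B$ of $\Sp_{2n}$. Corollary \ref{cor313}(2) then provides $W_{v_m} = W_{v'_m}$ on $Bw_0 B$ outside the thin slab $Bw_0 U^{-}_{w_0, m}$. On the slab itself Eq. \eqref{eq21} reduces each Whittaker function to the product of a unipotent character factor with $W_{v_m}(tw_0)$, respectively $W_{v'_m}(tw_0)$, and these torus values are exactly the ones we cannot yet control.

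The main obstacle, therefore, is to kill this residual slab contribution. The plan is to observe that, after the change of variables, this residual piece factors as $W_{v_m}(tw_0)$ times an inner integral in the torus variable $t = \bm_1(a)$ of the shape
$$\int_{a \in F^\times,\ \text{compact}} \eta(a)\mu_{\psi^{-1}}(a)^{-1}\, \psi\bigl(\text{nontrivial linear expression in } a\bigr)\, |a|^{s-\text{const}}\, d^\times a,$$
which is a Gauss-sum-type integral that vanishes as soon as $\cond(\eta)$ exceeds a bound depending only on $m$ (and hence only on the conductors of $v$ and $v'$). Defining $l(\pi,\pi')$ to be this bound and taking $i \geq I(X,\eta)$ from Lemma \ref{lemma43}(2), the residual contributions vanish identically for both $\pi$ and $\pi'$; the rest of the integral matches across $\pi$ and $\pi'$ by Corollary \ref{cor313}, giving $\Psi(W_{v_m},\phi_m,\tilde f_s^i) = \Psi(W_{v'_m},\phi_m,\tilde f_s^i)$. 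Dividing the two functional equations by the common nonzero value $\Psi(W_{v_m},\phi_m,f_s^i)$ yields $\gamma(s,\pi,\eta,\psi) = \gamma(s,\pi',\eta,\psi)$. The subtle points will be choosing the change of variables that exposes the Gauss-sum integral cleanly and verifying uniformity of the threshold for $\cond(\eta)$; everything else is a careful unwinding of the test data using the stability of partial Bessel functions established in Section 3.
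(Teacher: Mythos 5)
Your overall architecture coincides with the paper's: insert $W_{v_m}$, $\phi_m$, $f_s^i$ into the functional equation, show the untwisted side equals the same nonzero constant for $\pi$ and $\pi'$ (your account of this half, via Lemmas \ref{lemma41}, \ref{lemma26} and Corollary \ref{cor25}, is correct, as is the remark that it proves part (2) of the proposition in $\S$1), and use Corollary \ref{cor313} to localize the difference of the intertwined sides to the slab $D_m=D\cap U_m$. The gap is in the final, decisive step. First, the vanishing mechanism you propose is not the one actually available: on $D_m$ the factors $\omega_{\psi^{-1}}(w^1\bn_1(b))\phi_m(x)$ and $\tilde f_s^i(\bm_1(a)w^1\bn_1(b))$ are \emph{constants} (Lemmas \ref{lemma41} and \ref{lemma43}), the $\mu_{\psi^{-1}}(a)$ factors cancel, and $\psi_m$ is trivial on the slab, so no additive character in $a$ survives. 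What remains is
$$\int_{F^\times}\left( W_{v_m}(\bt(a)w_0)-W_{v_m'}(\bt(a)w_0)\right)\eta^{-1}_{-s-n+1}(a)\,da,$$
a pure Mellin-type integral; there is no Gauss sum to exploit.

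Second, and more seriously, your threshold is circular. You assert the residual term dies once $\cond(\eta)$ exceeds ``a bound depending only on $m$ (and hence only on the conductors of $v$ and $v'$)'', but $m$ is \emph{not} determined by $v,v'$ alone: the computation of the untwisted side already forces $m\ge\cond(\eta)$ (one needs $\int_{1+\CP^m}\eta_{s-n}(a)\,da=\vol(1+\CP^m)$). A threshold that grows with $m$ therefore never catches up with $\cond(\eta)$. The missing idea is a descent to a \emph{fixed} level: using Lemma \ref{lemma22}(3) to write $W_{v_m}$ as an average of $W_{v_k}$ with $k=4^{l(w_0)^2}C$, then applying Corollary \ref{cor313} and Eq.~(\ref{eq21}) a second time, one shows that $W_{v_m}(\bt(a)w_0)-W_{v_m'}(\bt(a)w_0)$ equals an explicit volume ratio times $W_{v_k}(\bt(a)w_0)-W_{v_k'}(\bt(a)w_0)$, where $k$ depends only on $v,v'$ and not on $\eta$ or $m$. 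Only after this reduction is the integrand a fixed locally constant function of $a$, and one may take $l(\pi,\pi')$ to be its level of invariance under $1+\CP^l$, so that integration against any $\eta$ with $\cond(\eta)>l$ gives zero. Without this descent your argument does not close.
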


\begin{proof}
 We take a quasi-character $\eta$ of $F^\times$ with conductor $\cond(\eta)$. Let $m$ be an integer such that $m\ge \max\wpair{\cond(\eta), 4^{l(w_0)^2}C}$, and $i$ be an integer such that $i\ge \max\wpair{ i_2(\eta), I( \widetilde U^1_m, \eta), (4n-1)m/3 }$, where $\widetilde U^1_m=\wpair{\bn_1(x),x\in \CP^{-(4n-3)m}}$,  and $I( \widetilde U^1_m, \eta) $ is defined in Lemma \ref{lemma43}. By Lemma \ref{lemma43}, we have a section $f_s^i\in \tilde I(s,\eta,\psi^{-1})$.

Let $W_m$ be $W_{v_m}$ or $W_{v_m'}$. We compute $\Psi(W_m,\phi_m, f_s^i)$. We take the integral over $U^1\setminus \SL_2$ on the open dense subset $U^1\setminus U^1A \bar U^1\subset U^1\setminus \SL_2$
\begin{align*}
\Psi(W_m,\phi_m, f_s^i)&=\int_{U^1\setminus \SL_2}\int_{F^{n-2}}\int_FW_m(j(r(y,x)g)) \omega_{\psi^{-1}}(g)\phi_m(x)f_s^i(g)dx dy dg\\
&=\int_{F^\times \times F}\int_{F^{n-2}}\int_F W_m(j(r(y,x)) j(\bm_1(a)) j(\bar \bn_1(b))) \\
&\cdot \omega_{\psi^{-1}}( \bm_1(a) \bar \bn_1(b))\phi_m(x) f_s^i( \bm_1(a) \bar \bn_1(b))dxdy db |a|^{-2} da.
\end{align*}
By definition of $f_s^i$, we get
\begin{align*}
\Psi(W_m,\phi_m, f_s^i)&=\int_{F^\times \times \CP^{3i}}\int_{F^{n-2}}\int_F W_m(j(r(y,x)) j(\bm_1(a)) j(\bar \bn_1(b))) \\
&\cdot \omega_{\psi^{-1}}( \bm_1(a) \bar \bn_1(b))\phi_m(x)(\mu_{\psi^{-1}}(a))^{-1} \eta_{s-3/2}(a)dxdy db da
\end{align*}
By assumption, we have $3i> (4n-1)m$ so that $\CP^{3i}\subset \CP^{(4n-1)m}$. Thus for $b\in \CP^{3i}\subset \CP^{(4n-1)m}$, we get $j(\bar \bn_1(b))=\bx_{-2(\alpha_1+\dots \alpha_{n-1})-\beta}(b)\in H_m$. By Lemma \ref{lemma22} and the fact $\omega_{\psi^{-1}}(\bar \bn_1(b))\phi_m=\phi_m$ (Lemma \ref{lemma41}), we get
\begin{align*}
\Psi(W_m,\phi_m, f_s^i)&=q_F^{-3i}\int_{F^\times }\int_{F^{n-2}}\int_F W_m(j(r(y,x)) j(\bm_1(a)) ) \\
&\cdot \omega_{\psi^{-1}}( \bm_1(a) )\phi_m(x) \mu_{\psi^{-1}}(a)^{-1}\eta_{s-3/2}(a)dxdy  da.
\end{align*}
Since $r(y,x)\bm_1(a)=\bm_1(a)r(ya,xa)$, and $\omega_{\psi^{-1}}(\bm_1(a))\phi_m(x)=\mu_{\psi^{-1}}(a)|a|^{1/2} \phi_m(xa)$, by changing variables, we get

\begin{align*}
\Psi(W_m,\phi_m, f_s^i)&=q_F^{-3i}\int_{F^\times }\int_{F^{n-2}}\int_F W_m( j(\bm_1(a)j(r(y,x))) ) \phi_m(x) \eta_{s-n}(a)dxdy  da\\
&=q_F^{-3i} \int_{F^\times }\int_{F^{n-2}}\int_{\CP^{(2n-1)m}} W_m( \bt(a) j(r(y,x))) )\eta_{s-n}(a)dxdy  da,
\end{align*}
where in the last step, we used the definition of $\phi_m=\textrm{char}_{\CP^{(2n-1)m}}.$ Since for $x\in \CP^{(2n-1)m}$, we have $j(r(0,x))=\bx_{-(\alpha_1+\dots+\alpha_{n-1})}(x)\in H_m$ (see Lemma \ref{lemma21}), we get
\begin{align*}
\Psi(W_m,\phi_m, f_s^i)&=q_F^{-3i}\int_{F^\times }\int_{F^{n-2}}\int_F W_m( j(\bm_1(a)j(r(y,x))) ) \phi_m(x) \eta_{s-n}(a)dxdy  da\\
&=q_F^{-3i-(2n-1)m} \int_{F^\times }\int_{F^{n-2}}W_m( \bt(a) j(r(y,0))) )\eta_{s-n}(a)dy  da.
\end{align*}
By Lemma \ref{lemma26}, we get 
\begin{align*}
\Psi(W_m,\phi_m, f_s^i)&=q_F^{-3i-(2n-1)m} q_F^{-\sum_{i=1}^{n-2}(2i+1)m}\int_{F^\times }W_m( \bt(a))\eta_{s-n}(a) da\\
&=q_F^{-3i-(n^2-1)m}\int_{F^\times} W_m(\bt(a))\eta_{s-n}(a)da.
\end{align*}
Finally, by Corollay \ref{cor25}, we get $W_m(\bt(a))=0$ if $a\notin 1+\CP^m$ and $ W_m(\bt(a))=1$ if $a\in 1+\CP^m$. Notice that $m\ge \cond(\eta)$ by assumption, we get
$$\Psi(W_m,\phi_m,f_s^i)=q_F^{-3i-(n^2-1)m}\int_{1+\CP^m}\eta_{s-n}(a)da=q_F^{-3i-(n^2-1)m}\vol(1+\CP^m)=q_F^{-3i-n^2m}.$$
Note that this calculation works form both $W_{v_m}$ and $W_{v_m'}$, we then get
\begin{equation}\label{eq41}
\Psi(W_{v_m},\phi_m,f_s^i)=\Psi(W_{v_m'}, \phi_m, f_s^i)=q_F^{-3i-n^2m}.
\end{equation}

Next, we compute the other side of the functional equation, i.e., $\Psi(W_m,\phi_m, \tilde f_s^i)$. We replace the domain $U^1\setminus \SL_2$ by its open dense subset $U^1\setminus U^1Aw^1 U^1$. Thus
\begin{align*}
\Psi(W_m, \phi_m, \tilde f_s^i)&=\int_{F^\times \times F} \int_{F^{n-2}}\int_F W_m(j(r(y,x) \bm_1(a)w^1 \bn_1(b))) \omega_{\psi^{-1}}(\bm_1(a)w^1\bn_1(b))\phi_m(x)\\
&\cdot \tilde f_s^i(\bm_1(a)w^1 \bn_1(b))dx dy db |a|^{-2}da.
\end{align*}
Notice that $j(w^1)=w_0$ and 
\begin{align*}
j(r(y,x) \bm_1(a)w^1 \bn_1(b))&=j(\bm_1(a) r(ya,xa) w^1 \bn_1(b))\\
&=j(\bm_1(a) w^1 r'(ya,xa) \bn_1(b))\\
&=\bt(a) w_0 j(r'(ya,xa))j(\bn_1(b))
\end{align*}
where $r'(y,x)=s_\beta^{-1} r(y,x) s_\beta$. By changing variables, we get
\begin{align*}
\Psi(W_m, \phi_m, \tilde f_s^i)&=\int_{F^\times \times F} \int_{F^{n-2}}\int_F W_m( \bt(a) w_0 j(r'(y,x))j(\bn_1(b)))\\
&\cdot \mu_{\psi^{-1}}(a)|a|^{1/2}\omega_{\psi^{-1}}(w^1\bn_1(b))\phi_m(x)  \tilde f_s^i(\bm_1(a)w^1 \bn_1(b))dx dy db |a|^{-1-n}da.
\end{align*}
We then get

\begin{align*}
&\Psi(W_{v_m},\phi_m, \tilde f_s^i)-\Psi(W_{v_m'},\phi_m,\tilde f_s^i)\\
=& \int_{F^\times \times F} \int_{F^{n-2}}\int_F \left( W_{v_m}( \bt(a) w_0 j(r'(y,x))j(\bn_1(b))) -W_{v_m'}( \bt(a) w_0 j(r'(y,x))j(\bn_1(b)))  \right)\\
&\cdot \mu_{\psi^{-1}}(a)|a|^{1/2}\omega_{\psi^{-1}}(w^1\bn_1(b))\phi_m(x)  \tilde f_s^i(\bm_1(a)w^1 \bn_1(b))dx dy db |a|^{-1-n}da.
\end{align*}

In matrix form, we have
$$r'(y,x)=\bn_n \begin{pmatrix}y&&\\ x&&\\ 0 &x& y \end{pmatrix},$$
and
$$j(r'(y,x))j(\bn_1(b))=\bn_n\begin{pmatrix}x& {}^t\! y J_{n-2} &b\\ &&y \\ &&x \end{pmatrix}.$$
We have $j(r'(y,x))j(\bn_1(b))\in U_{w_0}^-$. By Corollary \ref{cor313}, if $ j(r'(y,x))j(\bn_1(b)) \notin U_{m}$, we have 
$$W_{v_m}( \bt(a) w_0 j(r'(y,x))j(\bn_1(b))) -W_{v_m'}( \bt(a) w_0 j(r'(y,x))j(\bn_1(b)))=0, $$
and thus 
\begin{align*}
&\Psi(W_{v_m},\phi_m, \tilde f_s^i)-\Psi(W_{v_m'},\phi_m,\tilde f_s^i)\\
=& \int_{F^\times} \int_{D_m} \left( W_{v_m}( \bt(a) w_0 j(r'(y,x))j(\bn_1(b))) -W_{v_m'}( \bt(a) w_0 j(r'(y,x))j(\bn_1(b)))  \right)\\
&\cdot \mu_{\psi^{-1}}(a)|a|^{1/2}\omega_{\psi^{-1}}(w^1\bn_1(b))\phi_m(x)  \tilde f_s^i(\bm_1(a)w^1 \bn_1(b))dx dy db |a|^{-1-n}da,
\end{align*}
where $D_m=D\cap U_m$ with $$D=\wpair{ j(r'(y,x))j(\bn_1(b))=\bn_n \begin{pmatrix}x& {}^t\! y J_{n-2} &b\\ &&y \\ &&x   \end{pmatrix}, x, b\in F, y\in \Mat_{(n-2)\times 1}(F)}.$$
Now suppose that $j(r'(y,x))j(\bn_1(b))\in D_m \subset U_m$, then by Eq.(\ref{eq21}), we have
$$ W_{v_m}( \bt(a) w_0 j(r'(y,x))j(\bn_1(b)))=W_{v_m}(\bt(a)w_0), W_{v_m'}( \bt(a) w_0 j(r'(y,x))j(\bn_1(b)))=W_{v_m'}(\bt(a)w_0).$$
For $ j(r'(y,x))j(\bn_1(b))\in D_m \subset U_m$, we have $b\in \CP^{-(4n-3)m}$ and $x\in \CP^{-(2n-1)m}$, and thus we get 
$$ \omega_{\psi^{-1}}( w^1\bn_1(b))\phi_m(x)= \omega_{\psi^{-1}}(w^1)\phi_m(x)=\gamma(\psi^{-1})q_F^{-(2n-1)m},$$
see Lemma \ref{lemma41} and its proof. On the other hand, we have
$$ \tilde f_s^i(\bm_1(a)w^1 \bn_1(b))=(\mu_{\psi^{-1}}(a))^{-1}\eta^{-1}_{-s+3/2}(a) q_F^{-3i},$$
Lemma \ref{lemma43} and the assumption that $i>I(\widetilde U^1_m, \eta)$.

From the above discussions, we get
\begin{align}
&\Psi(W_{v_m},\phi_m, \tilde f_s^i)-\Psi(W_{v_m'},\phi_m,\tilde f_s^i)\label{eq42} \\
=& \gamma(\psi^{-1})\vol(D_m) q_F^{-3i-(2n-1)m} \int_{F^\times}  \left( W_{v_m}( \bt(a) w_0 ) -W_{v_m'}( \bt(a) w_0)  \right) \eta^{-1}_{-s-n+1}(a)da .\nonumber
\end{align}

Let $k=4^{l(w_0)^2}C$. By Lemma \ref{lemma32}, we have
\begin{align*}
&W_{v_m}(\bt(a)w_0)-W_{v_m'}(\bt(a)w_0)\\
=&\frac{1}{\vol(U_m)} \int_{U_m} (W_{v_k}(\bt(a)w_0 u)-W_{v_k'}(\bt(a)w_0u))\psi_U^{-1}(u)du\\
=&\frac{1}{\vol(U_m)}\int_{U_{w_0,m}^-}\int_{U_{w_0,m}^+} (W_{v_k}(\bt(a)w_0 u^+ u^-)-W_{v_k'}(\bt(a)w_0 u^+ u^-))\psi_U^{-1}(u^+u^-)du^+ du^-.
\end{align*}

We have $$U_{w_0}^+=\wpair{\begin{pmatrix} 1&&\\ &u&\\ &&1 \end{pmatrix}, u\in U^{(n-1)}},$$
where $U^{(n-1)}$ is the upper triangular maximal unipotent of $\Sp_{2(n-1)}$. It is clear that for $u^+\in U_{w_0,m}^+$, we have
$$\bt(a)w_0u^+=u^+\bt(a)w_0.$$
Since $W_{v_k}( u^+\bt(a)w_0 u^-)=\psi_U(u^+)W_{v_k}(\bt(a)w_0 u^-)$, we then get

\begin{align*}
&W_{v_m}(\bt(a)w_0)-W_{v_m'}(\bt(a)w_0)\\
=&\frac{\vol(U_{w_0,m}^+)}{\vol(U_m)}\int_{U_{w_0,m}^-}(W_{v_k}(\bt(a)w_0 u^-)-W_{v_k'}(\bt(a)w_0  u^-))\psi_U^{-1}(u^-)du^-.
\end{align*}

By Corollary \ref{cor313}, we get
$$W_{v_k}(\bt(a)w_0u^-)-W_{v_k}(\bt(a)w_0u^-)=0,$$
for $u^-\in U_{w_0,m}^--U_{w_0,k}^-$. Thus we get
\begin{align*}
&W_{v_m}(\bt(a)w_0)-W_{v_m'}(\bt(a)w_0)\\
=&\frac{\vol(U_{w_0,m}^+)}{\vol(U_m)}\int_{U_{w_0,k}^-}(W_{v_k}(\bt(a)w_0 u^-)-W_{v_k'}(\bt(a)w_0  u^-))\psi_U^{-1}(u^-)du^-.
\end{align*}
By Lemma \ref{lemma22} or Eq.(\ref{eq21}), we get 
$$W_{v_k}(\bt(a)w_0 u^-)=\psi_U(u^-)W_{v_k}(\bt(a)w_0), W_{v_k'}(\bt(a)w_0 u^-)=\psi_U(u^-)W_{v_k'}(\bt(a)w_0) .$$
Thus we have 
\begin{align*}
&W_{v_m}(\bt(a)w_0)-W_{v_m'}(\bt(a)w_0)\\
=&\frac{\vol(U_{w_0,m}^+) \vol(U_{w_0,k}^-)}{\vol(U_m)}(W_{v_k}(\bt(a)w_0 u^-)-W_{v_k'}(\bt(a)w_0  u^-))\\
=&\frac{\vol(U_{w_0,k}^-)}{\vol(U_{w_0,m}^-)}(W_{v_k}(\bt(a)w_0 u^-)-W_{v_k'}(\bt(a)w_0  u^-))\\
=&q_F^{(2n-1)^2k-(2n-1)^2m} (W_{v_k}(\bt(a)w_0 u^-)-W_{v_k'}(\bt(a)w_0  u^-)).
\end{align*}
Plug this into Eq.(\ref{eq42}), we get
\begin{align}
&\Psi(W_{v_m},\phi_m, \tilde f_s^i)-\Psi(W_{v_m'},\phi_m,\tilde f_s^i)\label{eq43} \\
=&\gamma(\psi^{-1}) q_F^{(2n-1)^2k -(2n-1)m-i}\frac{\vol(D_m)}{\vol(U_{w_0,m}^-)} \int_{F^\times}  \left( W_{v_k}( \bt(a) w_0 ) -W_{v_k'}( \bt(a) w_0)  \right) \eta^{-1}_{-s-n}(a)da \nonumber \\
=&\gamma(\psi^{-1})q_F^{(2n-1)^2k-n^2m-3i} \int_{F^\times}  \left( W_{v_k}( \bt(a) w_0 ) -W_{v_k'}( \bt(a) w_0)  \right) \eta^{-1}_{-s-n}(a)da. \nonumber
\end{align}
By Eq.(\ref{eq41}, \ref{eq43}) and the local functional equation, we get
\begin{align}
&\gamma(s,\pi,\eta,\psi)-\gamma(s,\pi',\eta,\psi) \label{eq44}\\
=& \gamma(\psi^{-1}) q_F^{(2n-1)^2k}  \int_{F^\times}  \left( W_{v_k}( \bt(a) w_0 ) -W_{v_k'}( \bt(a) w_0)  \right) \eta^{-1}_{-s-n+1}(a)da. \nonumber
\end{align}

Now we can prove the main theorem. Note that $k$ only depends on the choices of  $v $ and $v'$, which are fixed at the begining. Since the function $a\mapsto W_{v_k}(\bt(a)w_0)$ and $a\mapsto W_{v_k'}(\bt(a)w_0)$ are continuous, we can take an integer $l=l(\pi,\pi')$ such that for $c\ge l$, we have
$$W_{v_k}(\bt(a_0a)w)=W_{v_k}(\bt(a)w_0), \textrm{ and }W_{v_k'}(\bt(a_0a))=W_{v_k'}(\bt(a)w_0),$$
for all $a_0\in 1+\CP^c$. Now it is clear that of $\eta$ is a quasi-character with $\cond(\eta)>l$, the right side of Eq. (\ref{eq44}) vanishes, and hence
$$\gamma(s,\pi,\eta,\psi)=\gamma(s,\pi',\eta,\psi).$$
\end{proof}

\noindent\textbf{Remark:} From Eq.(\ref{eq44}) and the Mellin inversion, we can get that if $\gamma(s,\pi,\eta,\psi)=\gamma(s,\pi',\eta,\psi)$ for all quasi-characters $\eta$ of $F^\times$, then $W_{v_k}(\bt(a)w_0)=W_{v_k'}(\bt(a)w_0)$ for all $a\in F^\times$. From this, it is easy to show that 
$$W_{v_m}(tw_0)=W_{v_m}(tw_0),$$
for all $t\in T$ and $m\ge 4^{l(w_0)^2}C$. By Theorem \ref{thm311}, we can get that $W_{v_m}(g)=W_{v_m'}(g)$ for all $g\in Bw_0B$. This should be the first step to get a local converse theorem for $\Sp_{2n}$ if Theorem \ref{thm311} works for all $w\in \bW$.\\

As a corollary of the stability of $\gamma$-factors, Theorem \ref{thm44}, and the multiplicativity of $\gamma$-factors \cite{Ka}, we have the following stable form for $\gamma(s,\pi,\eta,\psi)$.
\begin{prop}
Let $\pi$ be a generic representation of $\Sp_{2n}$ and let $\chi_1,\dots, \chi_n$ be non-trivial characters of $F^\times$. Then for sufficiently highly ramified character $\eta$ of $F^\times$, we have 
$$\gamma(s,\pi,\eta,\psi)=\gamma(s,\eta,\psi) \prod_{i=1}^n \gamma(s,\chi_i\eta, \psi)\gamma(s,\chi_i^{-1}\eta,\psi).$$
\end{prop}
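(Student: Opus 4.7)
The plan is to identify the right-hand side with the gamma factor $\gamma(s,\pi_0,\eta,\psi)$ of an explicit principal series $\pi_0$ and then invoke the stability result (Theorem~\ref{thm44}) to transfer the computation to $\pi$. I would take $\pi_0$ to be the unique $\psi_U$-generic irreducible subquotient of the fully induced representation $\Ind_B^{\Sp_{2n}(F)}(\chi_1'\otimes\cdots\otimes\chi_n')$, where the $\chi_i'$ agree with the $\chi_i$ except possibly for a quadratic twist of one of them chosen so that $\omega_{\pi_0}=\omega_\pi$. With the $\chi_i'$ in sufficiently general position this induced representation is irreducible and generic, and its gamma factor coincides with that of any Langlands subquotient by the standard module formalism for classical groups.

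Next, I would apply Kaplan's multiplicativity of Rankin--Selberg gamma factors \cite{Ka} iteratively along the chain of maximal parabolics with Levis $\GL_r\times\Sp_{2(n-r)}$. This yields
$$\gamma(s,\pi_0,\eta,\psi) \;=\; \gamma(s,\eta,\psi)\prod_{i=1}^n \gamma(s,\chi_i'\eta,\psi)\,\gamma(s,(\chi_i')^{-1}\eta,\psi),$$
where the factor $\gamma(s,\eta,\psi)$ appears as the base case of the induction (the degenerate $\Sp_0\times\GL_1$ contribution, which is a Tate integral). Then Theorem~\ref{thm44} applied to $(\pi,\pi_0)$, whose central characters agree by construction, produces an integer $l_0=l(\pi,\pi_0)$ such that $\gamma(s,\pi,\eta,\psi)=\gamma(s,\pi_0,\eta,\psi)$ whenever $\cond(\eta)>l_0$.

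Finally, I would reconcile the primed and unprimed characters: if $\chi_1'=\chi_1\sigma$ for a quadratic $\sigma$, then once $\cond(\eta)>\cond(\sigma)$ the $\sigma$-contributions to the Tate epsilon factors cancel in the product $\gamma(s,\chi_1'\eta,\psi)\gamma(s,(\chi_1')^{-1}\eta,\psi)$, so this product equals $\gamma(s,\chi_1\eta,\psi)\gamma(s,\chi_1^{-1}\eta,\psi)$. Setting $l:=\max(l_0,\cond(\sigma))$ then delivers the desired identity for $\cond(\eta)>l$. The main obstacle I anticipate is pinning down the base case of the multiplicativity: while the iterative step for the Rankin--Selberg $\gamma$-factors follows from \cite{Ka}, verifying that the terminal $\Sp_0\times\GL_1$ factor produces exactly the Tate $\gamma(s,\eta,\psi)$ (with the correct Weil-index normalization arising from the Fourier--Jacobi model) requires a short explicit computation, against an unramified Whittaker function and a Schwartz datum, using the formulas for the Weil representation recorded in \S1.
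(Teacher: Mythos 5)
Your proposal is correct and follows the same route the paper intends: the paper derives this proposition in one line from Theorem~\ref{thm44} (stability) combined with Kaplan's multiplicativity, exactly as you do by comparing $\pi$ with a principal series $\pi_0$. Your additional care about matching central characters via a quadratic twist and about the $\Sp_0\times\GL_1$ base case of multiplicativity only fills in details the paper leaves implicit.
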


\section{Howe vectors and stability of gamma factors for metaplectic groups}
In this section, we will extend the stability result to the $\widetilde \Sp_{2n}(F)$-case. Throughout this section, we assume that $F$ is a $p$-adic field with odd residue characteristic. 

 In $\widetilde \Sp_{2n}(F)$, we will frequently use the following relation, 
\begin{equation}\label{eq51}(p,\epsilon_1)(g,\epsilon)(p,\epsilon_1)^{-1}=(pgp^{-1},\epsilon),\end{equation}
for all $g\in \Sp_{2n}(F), p\in P$, and $\epsilon_1,\epsilon\in \mu_2=\wpair{\pm 1}$, see Eq.(2-6) of \cite{Sz1}. Recall that $P$ is the Siegel parabolic subgroup of $\Sp_{2n}(F)$.
 Denote $pr: \widetilde \Sp_{2n}(F)\ra \Sp_{2n}(F)$ the natural projection.

Let $K=\Sp_{2n}(\CO_F)$, which is a maximal open compact subgroup of $\Sp_{2n}(F)$. It is known that there is a group homomorphism $s:K\ra \widetilde \Sp_{2n}(F)$ such that $pr\circ s=\id_K$, see page 43 of \cite{MVW}. This splitting $s$ is known to be unique, see page 1662 of \cite{GS} for example. Denote the splitting $s$ by $s(k)=(k, \epsilon(k))$, where $\epsilon(k)\in \wpair{\pm 1}$. It is easy to see that the splitting over $K\cap U$ is also unique. In fact, any two such splittings differ by a quadratic character of $K\cap U$ and it suffices to show that $K\cap U$ has no nontrivial quadratic character. The latter statement follows from the fact that $2$ is a unit in $\CO_F^\times$ and (thus) the square map $K\cap U\ra K\cap U$ is surjective. Since there is a canonical splitting over $U$ given by $u\mapsto (u,1)$, it follows that $\epsilon(k)=1$ for all $k\in K\cap U$.

 Let $m$ be a positive integer and $K_m$ be the congruence subgroup $(1+\Mat_{2n\times 2n}(F))\cap \Sp_{2n}(F)\subset K$ as in $\S$2 and let $\tilde K_m$ be the inverse image of $K_m$ in $\widetilde \Sp_{2n}$. It is clear that $\tilde K_m=s(K_m)\times \mu_2$ as a group. Using the Iwahori decomposition, one can check that the square map $K_m\ra K_m$ is surjective and hence the splitting $s$ restricted to $K_m$ is also unique.
 
  Let $\psi$ be an unramified additive character of $F$, recall that we defined a character $\tau_m$ of $K_m$ in $\S$2. We now define a character $\tilde \tau_m$ of $\tilde K_m$ by
 \begin{equation}\tilde \tau_m((k,\epsilon(k) \epsilon))=\epsilon \tau_m(k), k\in K_m, \epsilon\in \wpair{\pm 1}.\end{equation}
 Since $s$ is a group homomorphism, it is clear that $\tilde \tau_m$ is indeed a character of $\tilde K_m$.
 
  Let $$d_m=\diag(\varpi^{-m(2n-1)}, \varpi^{-m(2n-3)},\dots, \varpi^{-m}, \varpi^m, \dots, \varpi^{m(2n-1)})\in \Sp_{2n}(F)$$
and $H_m=d_m K_m d_m^{-1}$ be as in $\S$2 and $\tilde d_m=(d_m,1)\in \widetilde \Sp_{2n}(F)$. Define a group homomorphism $s': H_m \ra \widetilde \Sp_{2n}(F)$, by $$s'(h)=\tilde d_m s(d_m^{-1}h d_m) \tilde d_m^{-1}.$$
We can check that $s'(d_m k d_m^{-1})=(d_m k d_m^{-1},\epsilon(k)), k\in K_m$. Since $s'$ is a group homomorphism, we have $c(d_mkd_m^{-1}, d_m k' d_m^{-1})=c(k,k')$. 

Let $\widetilde H_m=\tilde d_m \tilde K_m \tilde d_m^{-1}=s'(H_m)\times \mu_2$. We define a character $\tilde \psi_m$ on $\tilde H_m$ by 
\begin{equation}\tilde \psi_m( \tilde h)=\tilde \tau_m(\tilde d_m^{-1} \tilde h \tilde d_m).\end{equation}
If we write $\tilde h=s'(h)(1,\epsilon)$ for $h\in H_m$ and $\epsilon\in \mu_2$, we have
\begin{equation}\label{eq54}\tilde \psi_m(\tilde h)=\epsilon \psi_m(h).\end{equation}

\begin{lem}
We have $\tilde \psi_m|_{\tilde U_m}=\psi_{\tilde U}|_{\tilde U_m},$ where $\tilde U_m=\tilde U\cap \tilde H_m$ and $\psi_{\tilde U}$ is the generic character defined in $\S1$, i.e., $\psi_{\tilde U}(u,\epsilon)=\epsilon\psi_U(u)$.
\end{lem}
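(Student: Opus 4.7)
The plan is to reduce the identity to (i) the analogous equality $\psi_m|_{U_m}=\psi_U|_{U_m}$ already recorded in Lemma \ref{lemma21}(1), and (ii) the assertion that the splitting $s'\colon H_m\to \widetilde\Sp_{2n}(F)$ agrees on $U_m$ with the canonical splitting $u\mapsto (u,1)$ of $U$. Once (ii) is in hand, any element $\tilde u\in \tilde U_m$ can be written as $\tilde u=s'(u)(1,\epsilon)=(u,\epsilon)$ for a unique $u\in U_m$ and $\epsilon\in\mu_2$, and then (\ref{eq54}) together with Lemma \ref{lemma21}(1) gives
$$\tilde\psi_m(\tilde u)=\epsilon\,\psi_m(u)=\epsilon\,\psi_U(u)=\psi_{\tilde U}((u,\epsilon)),$$
which is what we want.

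So the heart of the matter is showing $s'(u)=(u,1)$ for every $u\in U_m$. The ingredient I would use is the uniqueness of the splitting of $K\cap U$ in $\widetilde\Sp_{2n}(F)$: any two splittings of $K\cap U$ differ by a quadratic character, and since $2\in\CO_F^\times$ the square map on $K\cap U$ is surjective, so $K\cap U$ admits no nontrivial quadratic character. Because $(u,1)$ provides one splitting, the restriction of $s$ to $K\cap U$ must coincide with it; in particular, for any $k\in K_m\cap U$ one has $s(k)=(k,1)$.

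Now for $u\in U_m=U\cap H_m$, the element $k:=d_m^{-1}ud_m$ lies in $K_m$ by the definition of $H_m$, and it lies in $U$ because $d_m$ is diagonal and so normalizes $U$. Hence $k\in K_m\cap U$, and by the previous paragraph $s(k)=(k,1)$. Since $d_m$ belongs to the Siegel parabolic $P$, the conjugation formula (\ref{eq51}) applies to $\tilde d_m=(d_m,1)$ and yields
$$s'(u)=\tilde d_m\,(k,1)\,\tilde d_m^{\,-1}=(d_m k d_m^{-1},1)=(u,1),$$
as needed. With this, the computation in the opening paragraph closes the argument. The only subtle point is the uniqueness-of-splitting step, which really does depend on the odd residue characteristic via the surjectivity of squaring on $K\cap U$; the rest is a direct unwinding of the definitions of $\tilde\psi_m$ and $\psi_{\tilde U}$.
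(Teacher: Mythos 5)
Your argument is correct and is essentially the paper's own proof: both reduce the claim to Lemma \ref{lemma21}(1) after using the triviality of the splitting $s$ on $K\cap U$ (via the absence of nontrivial quadratic characters, since $2\in\CO_F^\times$) and the conjugation formula (\ref{eq51}) to identify $\tilde U_m$ with $U_m\times\mu_2$. The only cosmetic difference is that you isolate the statement $s'(u)=(u,1)$ for $u\in U_m$ as an explicit intermediate step, whereas the paper unwinds the same computation directly through the definition of $\tilde\tau_m$.
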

\begin{proof}
By the above discussion, we have $\epsilon(k)=1$ for $k\in K\cap U$. Since $\tilde U_m=\tilde H_m\cap \tilde U= \tilde d_m (s(K_m\cap U)\times \mu_2)\tilde d_m^{-1}$. A typical element $\tilde u\in \tilde U_m$ is of the form $\tilde d_m (u,\epsilon)\tilde d_m^{-1}=(d_m ud_m^{-1},\epsilon)$ by Eq.(\ref{eq51}), where $u\in K_m\cap U$ and $\epsilon\in \wpair{\pm 1}$. By the definition, we have $\tilde \psi_m(\tilde u)=\tilde \tau_m((u,\epsilon))=\epsilon \tau_m(u)=\epsilon \psi_m(d_m u d_m^{-1})$, and $\psi_{\tilde U}(\tilde u)=\epsilon \psi_{U}(d_m u d_m^{-1})$. Thus it suffices to show that $\psi_m(d_mud_m^{-1})=\psi_U(d_m ud_m^{-1})$. This is Lemma \ref{lemma21} (1).
\end{proof}
Note that the above calculation shows that $\tilde U_m=U_m\times \mu_2$ as a group and $\tilde \psi_m((u,\epsilon))=\epsilon \psi_m(u)$ for $u\in U_m, \epsilon\in \mu_2$.

Let $(\pi, V_\pi)$ be a genuine irreducible $\psi_{\tilde U}$-generic smooth representation of $\widetilde \Sp_{2n}$, and $v\in V_\pi$, we can define the Howe vector $v_m$ similarly, i.e., 
$$v_m=\frac{1}{\vol(\tilde U_m)}\int_{\tilde U_m}\tilde \psi_m(\tilde u)^{-1}\pi(\tilde u)vd\tilde u. $$
Note that $\tilde U_m=U_m \times \mu_2$ as a group,  $\tilde \psi_m((u,\epsilon))=\epsilon\psi_m(u)$ and $\pi((u,\epsilon))v=\epsilon\pi((u,1))v $, for $u\in U_m$ and $\epsilon\in \mu_2$, we get 
$$v_m=\frac{1}{\vol(U_m)}\int_{U_m}\psi_m(u)^{-1}\pi((u,1))vdu.$$
As in the $\Sp_{2n}$ case, we let $C=C(v)$ be a positive integer such that $v$ is fixed by $\pi(s(K_m))$. Then the analogue of Lemma \ref{lemma22} holds:
\begin{lem}\label{lemma52}
We have
\begin{enumerate}
\item $W_{v_m}(1)=1;$
\item if $m\ge C$, then $\pi(\tilde h)v_m=\tilde \psi_m(\tilde h)v_m$, for all $\tilde h\in \tilde H_m;$
\item for $k\le m$, we have 
$$v_m=\frac{1}{\vol(\tilde U_m)}\int_{\tilde U_m}\tilde \psi_m^{-1}(\tilde u) \pi(\tilde u)v_k d\tilde u=\frac{1}{\vol(U_m)}\int_{U_m} \psi_m(u)^{-1}\pi((u,1))v_k du.$$
\end{enumerate}
\end{lem}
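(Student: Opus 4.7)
The plan is to mirror the proof of Lemma \ref{lemma22} in the linear $\Sp_{2n}(F)$ case, using the splitting homomorphism $s'$ and the genuineness of $\pi$ to reduce every computation to a manipulation at the level of $\Sp_{2n}(F)$.

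For part (1), I would evaluate $W_{v_m}(1) = \lambda_\pi(v_m)$ directly: for $u \in U_m$ the canonical splitting of $U$ in $\widetilde\Sp_{2n}$ identifies $(u,1)\in\tilde U_m$, so $\lambda_\pi(\pi((u,1))v) = \psi_{\tilde U}((u,1))\cdot 1 = \psi_U(u) = \psi_m(u)$ by Lemma \ref{lemma21}(1), and the integral collapses to $1$. Part (3) splits naturally into two equalities: the first reduces from $\tilde U_m$ to $U_m$ by writing $\tilde u = (u,\epsilon)$, using genuineness, and noting that the $\mu_2$-integration cancels the factor $\vol(\tilde U_m)/\vol(U_m) = 2$; the second, that this equals $v_m$, is obtained by substituting the definition of $v_k$, using the triviality of the Rao cocycle on $U\times U$ to merge the two unipotent products, applying Lemma \ref{lemma21}(1) to identify $\psi_m|_{U_k} = \psi_k|_{U_k}$, and changing variables $u'' = uu'$.

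The substantive step is (2). Writing $\tilde h = s'(h)(1,\epsilon)$ and using genuineness of $\pi$, it suffices to show $\pi(s'(h))v_m = \psi_m(h)v_m$ for $h\in H_m$ and $m\ge C$. Since $s'$ is a group homomorphism, the Iwahori decomposition $H_m = \bar U_m\cdot T_m\cdot U_m$ (inherited from the one for $K_m$ through conjugation by $d_m$) lifts via $s'$, and the three factors can be treated in turn. For $u_+\in U_m$ one gets $s'(u_+) = (u_+,1)$ by uniqueness of the unipotent splitting combined with Eq.~\eqref{eq51}, and the vanishing Rao cocycle on $U\times U$ allows a change of variables $u\mapsto u_+u$ in the defining integral of $v_m$ to give Baruch's identity directly. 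For $t\in T_m = K_m\cap T$, Eq.~\eqref{eq51} yields $s'(t) = (t,\epsilon(t))$, and $\pi(s(t))v = v$ since $m\ge C$; Eq.~\eqref{eq51} again allows one to commute $s'(t)$ through $(u,1)$ with trivial cocycle, and the verification reduces to the $\Sp_{2n}$-level fact that $\psi_m(tut^{-1}) = \psi_m(u)$ on $U_m$. For $\bar u\in\bar U_m$, writing $\bar u = d_m\bar k d_m^{-1}$ with $\bar k\in K_m\cap\bar U$ and using $d_m\in T\subset P$ in Eq.~\eqref{eq51} gives $s'(\bar u) = (\bar u,\epsilon(\bar k))$; the remaining argument transcribes the analogous step in \cite[Lemma 5.2]{Ba2}, modified only by bookkeeping of the single $\mu_2$-scalar $\epsilon(\bar k)$.

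The main obstacle is the $\bar U_m$-factor in step (2), whose proof in the linear case is already the most intricate part of Baruch's Iwahori-type argument. The metaplectic upgrade, however, is essentially cosmetic: because $d_m$ lies in the Siegel Levi $M\subset P$, Eq.~\eqref{eq51} trivializes every Rao cocycle that would otherwise arise from conjugations involving elements of the Siegel parabolic, and the whole proof reduces to tracking an overall $\mu_2$-scalar on top of Baruch's linear computation.
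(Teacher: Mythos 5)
Your proposal is sound, but for part (2) it is organized differently from the paper, and the difference matters at exactly the point you flag as the main obstacle. The paper does not verify the equivariance of $v_m$ factor-by-factor along the Iwahori decomposition. Instead it introduces the auxiliary vector $\tilde v_m=\vol(\tilde H_m)^{-1}\int_{\tilde H_m}\tilde\psi_m(\tilde h)^{-1}\pi(\tilde h)v\,d\tilde h$, which satisfies the desired transformation law for free, and then proves $\tilde v_m=v_m$: writing $H_m=U_m\cdot\bar B_m$ with $dh=d\bar b\,du$, one uses that $\psi_m$ is trivial on $\bar B_m$, that $\bar B_m\subset H_m\cap K_m\subset K_C$, and that the splitting over $H_m\cap K_m$ is unique (the square map on $H_m\cap K_m$ is surjective), so $s'(\bar b)=s(\bar b)$ and $\pi(s'(\bar b))v=v$; the $\bar B_m$-integral then collapses and what remains is the definition of $v_m$. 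This sidesteps entirely the conjugation analysis for the lower-triangular factor that you defer to ``transcribing Baruch,'' which is precisely the delicate step (one must control the factorization $\bar u\,u=u_1t_1\bar u_1$ and the induced change of variables on $U_m$). Your route can be completed, but note that the ``bookkeeping of the single $\mu_2$-scalar $\epsilon(\bar k)$'' is most cleanly discharged not by tracking $\epsilon(\bar k)$ through Eq.~(\ref{eq51}) but by the paper's uniqueness observation, which gives $s'|_{H_m\cap K_m}=s|_{H_m\cap K_m}$ outright; without some such identification you cannot conclude $\pi(s'(\bar u))v=v$ from the $K_C$-invariance of $v$, since the latter is stated for the splitting $s$. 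Your treatments of (1), (3), and of the $U_m$- and $T_m$-factors in (2) are correct and match the paper's (the paper declares (1) and (3) clear).
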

\begin{proof}
(1) and (3) are clear and we show (2). Consider the vector $$\tilde v_m=\frac{1}{\vol(\tilde H_m)}\int_{\tilde H_m} \tilde \psi_m (\tilde h)^{-1}\pi(\tilde h)v d\tilde h.$$
It is clear that $\pi(\tilde h) \tilde v_m=\tilde \psi_m(\tilde h)\tilde v_m$. It suffices to show that $\tilde v_m=v_m$.

By Eq.(\ref{eq54}) and the fact that $\pi$ is genuine, we have
$$\tilde v_m=\frac{1}{\vol(H_m)}\int_{H_m} \psi_m(h)^{-1}\pi(s'(h))v dh.$$ 
From the Iwahori decomposition of $K_m$, we have $H_m=\bar B_m \cdot U_m$, where $\bar B_m=\bar B\cap H_m$, $U_m=U\cap H_m$ and $\bar B$ is the lower triangular Borel subgroup of $\Sp_{2n}(F)$. For $h=u \bar b\in H_m$ with $u\in U_m, \bar b\in \bar B_m$, we choose measures such that $dh=d\bar b du$. Thus
\begin{align*}
\tilde v_m=\frac{1}{\vol (H_m)}\int_{U_m} \int_{\bar B_m} \psi_m(u \bar b)^{-1} \pi(s'(u)s'(\bar b))v d\bar b du.
\end{align*}

By the definition of $\psi_m$, we can get $\psi_m(\bar b)=1$. Notice that $\bar b\in \bar B_m\subset H_m\cap K_m$, see \ref{lemma21} for example.  It is easy to see that the square map $H_m\cap K_m \ra H_m \cap K_m$ is surjective and thus there is a unique splitting over $H_m\cap K_m$, i.e., $s(\bar b)=s'(\bar b)$. For $m\ge C$, we have $\pi(s'(\bar b))v=\pi(s(\bar b))v=v$. Thus
\begin{align*}
\tilde v_m&=\frac{1}{\vol (U_m)}\int_{U_m}  \psi_m(u )^{-1} \pi(s'(u))v  du\\
&=\frac{1}{\vol(U_m)}\psi_m(u)^{-1}\pi((u,1))v du\\
&=v_m.
\end{align*}
This proves (2).
\end{proof}
Using relation Eq.(\ref{eq51}), one can check that all of the results in $\S$2-4 for $\Sp_{2n}$ have corresponding analogue for $\widetilde \Sp_{2n}$ with similar proof. We only check one of them to illustrate the idea how to modify the proof in the $\Sp_{2n}$ case so that it is adapt to the $\widetilde \Sp_{2n}$ case.

\begin{lem}
For $t\in T$ the torus of $\Sp_{2n}$ and $m\ge C$. If $\epsilon\in \mu_2$ and $W_{v_m}((t,\epsilon))\ne 0$, then $\alpha_i(t)\in 1+\CP^m$ for $1\le i\le n-1$ and $\beta(t)\in 1+\CP^m$.
\end{lem}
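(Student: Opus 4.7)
The plan is to adapt the proof of Lemma $\ref{lemma23}$ verbatim, with the only modification being that conjugation in $\Sp_{2n}(F)$ must now be carried out inside the cover $\widetilde \Sp_{2n}(F)$. The essential input is the relation Eq.(\ref{eq51}): since the maximal torus $T$ sits inside the Siegel parabolic $P$, for any $t \in T$, $\epsilon \in \mu_2$, and $g \in \Sp_{2n}(F)$ we have $(t,\epsilon)(g,1)(t,\epsilon)^{-1} = (tgt^{-1},1)$. This lets us transplant the $\Sp_{2n}$ computation into $\widetilde \Sp_{2n}$ without introducing any extra cocycle.

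First, fix a simple root $\gamma$ (either $\gamma = \alpha_i$ with $1 \le i \le n-1$, or $\gamma = \beta$) and pick $r \in \CP^{-m}$. By Lemma $\ref{lemma21}$(2), $\bx_\gamma(r) \in U_m$, so its canonical lift $(\bx_\gamma(r),1)$ lies in $\tilde U_m$, and by the discussion preceding Lemma $\ref{lemma52}$ we have $\tilde \psi_m((\bx_\gamma(r),1)) = \psi_m(\bx_\gamma(r)) = \psi(r)$. Applying Eq.(\ref{eq51}) with $p = t$ and $g = \bx_\gamma(r)$ yields
\[
(t,\epsilon)(\bx_\gamma(r),1) = (\bx_\gamma(\gamma(t) r),1)(t,\epsilon).
\]

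Second, evaluate $W_{v_m}$ on both sides. Using Lemma $\ref{lemma52}$(2) on the left-hand side gives
\[
W_{v_m}((t,\epsilon)(\bx_\gamma(r),1)) = \tilde\psi_m((\bx_\gamma(r),1))\, W_{v_m}((t,\epsilon)) = \psi(r)\, W_{v_m}((t,\epsilon)),
\]
while the left equivariance of $W_{v_m}$ under $\psi_{\tilde U}$ applied to the right-hand side gives
\[
W_{v_m}((\bx_\gamma(\gamma(t) r),1)(t,\epsilon)) = \psi(\gamma(t) r)\, W_{v_m}((t,\epsilon)).
\]
If $W_{v_m}((t,\epsilon)) \ne 0$, then $\psi(r) = \psi(\gamma(t)r)$ for every $r \in \CP^{-m}$, and since $\psi$ has conductor $\CO$, we conclude $\gamma(t) - 1 \in \CP^m$. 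Taking $\gamma = \alpha_i$ for $1 \le i \le n-1$ and $\gamma = \beta$ in turn gives the claim.

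There is no real obstacle beyond bookkeeping: the only point worth emphasizing is that the conjugation identity Eq.(\ref{eq51}) is applicable because $T \subset M \subset P$, so no Rao-cocycle term appears to spoil the character computation. This is why the $\Sp_{2n}$ argument transfers essentially unchanged.
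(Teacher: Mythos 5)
Your proof is correct and follows essentially the same route as the paper's: both arguments lift $\bx_\gamma(r)$ to $(\bx_\gamma(r),1)\in\tilde H_m$, use Eq.~(\ref{eq51}) to conjugate past $(t,\epsilon)$ without a cocycle term, and then compare $\tilde\psi_m$ on the right with $\psi_{\tilde U}$ on the left via Lemma~\ref{lemma52}(2). No gaps.
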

\begin{proof}
This is the analogue of Lemma \ref{lemma23}, and in fact similar proof goes through. For a simple root $\gamma$, we take $r\in \CP^{-m}$ so that $\bx_{\gamma}(r)\in H_m$. Then $(\bx_{\gamma}(r),1)\in \tilde H_m$. By Eq.(\ref{eq51}), we have
$$(t,\epsilon) (\bx_{\gamma}(r),1)(t,\epsilon)^{-1}=(t\bx_{\gamma}(r)t^{-1},1)=(\bx_{\gamma}(\gamma(t)r),1),$$
see the proof of Lemma \ref{lemma23}. By Lemma \ref{lemma52}, we have
$$\tilde \psi_m((\bx_{\gamma}(r),1))W_{v_m}((t,\epsilon))=\psi_{\tilde U} ((\bx_{\gamma}(\gamma(t)r),1))W_{v_m}((t,\epsilon)). $$
By the definition of $\tilde \psi_m$ and $\psi_{\tilde U}$, if $W_{v_m}((t,\epsilon))\ne 0$, we have $\psi(r)=\psi(\gamma(t)r)$ for all $r\in \CP^{-m}$. Since $\psi$ is unramified, we get $\gamma(t)\in 1+\CP^m$.
\end{proof}

Similar consideration as in the $\Sp_{2n}$ case will give us the stability of gamma factors for $\widetilde \Sp_{2n}$, i.e, we have the following 
\begin{thm}
Let $(\pi,V_{\pi})$ and $(\pi',V_{\pi'})$ be two genuine irreducible smooth $\psi_{\tilde U}$-generic representation of $\widetilde \Sp_{2n}(F)$ with the same central character, where $F$ is a $p$-adic field such that its residue characteristic is not $2$. If $\eta$ is a highly ramified quasi-character of $F^{\times}$, then
$$\gamma(s,\pi,\eta,\psi)=\gamma(s,\pi',\eta,\psi).$$
\end{thm}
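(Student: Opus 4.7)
The plan is to follow the architecture of the $\Sp_{2n}$ proof (Theorem \ref{thm44}) essentially verbatim, using the conjugation relation \eqref{eq51} and Lemma \ref{lemma52} to reduce each step to the linear computations already carried out in \S2--\S4. The key observation is that the Howe vector $v_m$ for $\widetilde\Sp_{2n}(F)$ is defined via an average over $\tilde U_m \simeq U_m\times \mu_2$, so that the Whittaker function $W_{v_m}$ satisfies $W_{v_m}((u g h, \epsilon)) = \epsilon \psi_U(u)\psi_m(h)W_{v_m}((g,1))$ for $u\in U$, $h\in H_m$, which is the metaplectic analogue of \eqref{eq21}. Once this is in hand the entire root-system bookkeeping in \S3 carries over because all relevant commutators happen inside $\Sp_{2n}$ via \eqref{eq51}.

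First I would verify the analogues of Lemmas \ref{lemma23}--\ref{lemma26}, Lemma \ref{lemma31}--\ref{lemma310}, and Theorem \ref{thm311} with Corollary \ref{cor313} in the metaplectic setting. For the torus/Bessel-type vanishing lemmas, the proof I sketched above for the metaplectic Lemma \ref{lemma23} is the template: one conjugates by $(t,\epsilon)$ using \eqref{eq51} to reduce to an identity between $\tilde\psi_m$ and $\psi_{\tilde U}$, and the sign $\epsilon$ cancels. For Lemmas \ref{lemma26} through \ref{lemma310} one notes that all the intertwining manipulations take place among elements $(g,1)$ with $g\in U$ or $g\in U_w^{\pm}$, whose cocycle values are trivial by splitting of $U$ in $\widetilde\Sp_{2n}$, so the Chevalley relations and Bruhat-order arguments go through unchanged. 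The partial-Bessel stability theorem (Theorem \ref{thm311}) is then formally identical because its proof only uses the transformation property of Lemma \ref{lemma52}, the subgroup $U_{w,m}^{\pm}$ structure, and the fact that for $w\le w_0$ the element $tw$ may be lifted unambiguously via $s'$.

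Next I would run the zeta integral computation exactly as in \S4.3, choosing the same test data: the Howe vector $v_m$, the Schwartz function $\phi_m = \mathrm{char}_{\CP^{(2n-1)m}}$ (whose interaction with the Weil representation is given by Lemma \ref{lemma41}), and the section $f_s^i$ of $I(s,\eta)$ constructed in \S4.2 (this section is not genuine and belongs to the non-metaplectic induced representation $I(s,\eta)$, exactly the one appearing in the $\widetilde\Sp_{2n}$ zeta integral of \S1.4). Computing $\Psi(W_{v_m},\phi_m,f_s^i)$ in coordinates $U^1\bs U^1 A \bar U^1$ gives the same clean value $q_F^{-3i - n^2 m}$, independent of $\pi$, by Corollary \ref{cor25} (metaplectic version) and the central character hypothesis; computing $\Psi(W_{v_m},\phi_m,\tilde f_s^i)$ in the open cell $U^1\bs U^1 A w^1 U^1$, using Corollary \ref{cor313} to discard contributions from $U_{w_0}^- - U_{w_0,m}^-$, yields an analogue of Eq.~\eqref{eq44}:
\begin{equation*}
\gamma(s,\pi,\eta,\psi)-\gamma(s,\pi',\eta,\psi) = c\cdot q_F^{(2n-1)^2 k}\int_{F^\times}\bigl(W_{v_k}(\bt(a)w_0) - W_{v_k'}(\bt(a)w_0)\bigr)\eta^{-1}_{-s-n+1}(a)\,da,
\end{equation*}
where $c$ is a nonzero constant and $k = 4^{l(w_0)^2}C$ depends only on $v, v'$.

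Finally, continuity of $a \mapsto W_{v_k}(\bt(a)w_0)$ and $a\mapsto W_{v_k'}(\bt(a)w_0)$ produces an integer $l=l(\pi,\pi')$ such that both are invariant under multiplication by $1+\CP^l$; for $\cond(\eta)>l$ the character $\eta^{-1}_{-s-n+1}$ is non-trivial on $1+\CP^l$ and the integral vanishes. The main obstacle, as I see it, is purely bookkeeping: tracking cocycles carefully through Lemma \ref{lemma310}-type conjugations to confirm that the sign $\epsilon\in\mu_2$ produced by the cocycle is absorbed correctly by $\tilde\psi_m$ and $\psi_{\tilde U}$, so that the $\Sp_{2n}$ identities lift without picking up spurious signs. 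Once \eqref{eq51} has been applied consistently there is no genuinely new analytic content beyond what is already in \S3--\S4.
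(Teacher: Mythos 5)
Your proposal is correct and follows essentially the same route as the paper: the paper's own treatment in \S 5 likewise sets up the metaplectic Howe vectors via the splittings $s$, $s'$ and the relation (5.1), verifies the analogue of Lemma \ref{lemma22} and a sample torus lemma, and then asserts that the computations of \S 2--\S 4 (with the non-genuine section of $I(s,\eta)$ and the same $\phi_m$) carry over verbatim to yield the analogue of Eq.~(\ref{eq44}). The only nuance worth flagging is that the section needed here is the linear analogue of $f_s^i$ (omit the $\mu_{\psi^{-1}}^{-1}$ and $\zeta$ factors), not literally the genuine section of \S 4.2, but your text makes clear you intend exactly that.
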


\section{Stability of $\gamma$-factors for $\RU_{E/F}(n,n)$}
 In this section, we assume $E/F$ is a quadratic extension of $p$-adic fields, and denote $x\mapsto \bar x$ the nontrivial Galois action in $\Gal(E/F)$. For objects corresponding to $E$, we will add a subscript $E$. For example, we denote $\CO$ the integer ring of $F$ (as in the notation section) and $\CO_E$ the integer ring of $E$.
 
  The group $\RU_{E/F}(n,n)$ is defined by 
 $$\RU_{E/F}(n,n)=\wpair{g\in \GL_{2n}(E)| {}^t\! \bar g \begin{pmatrix}& J_n\\ -J_n & \end{pmatrix} g=  \begin{pmatrix}& J_n\\ -J_n & \end{pmatrix} },$$
where $J_n$ is the same as in the notation section. We will use similar notations as in the $\Sp_{2n}$ case. For example, 
$$\bm_n(g)=\begin{pmatrix}g& \\ & g^* \end{pmatrix}, g\in \GL_n(E), g^*=J_n {}^t\! \bar g^{-1}J_n,$$
 $$\bn_n(b)=\begin{pmatrix}I_n & b\\ & I_n \end{pmatrix}, b\in \Mat_{n\times n}(E), {}^t\! \bar b=J_n bJ_n,$$
 and $$r(y,x)=\bm_n \begin{pmatrix}I_{n-2}&&y \\ &1&x \\ &&1 \end{pmatrix}, y\in \Mat_{(n-2)\times 1}(E), x\in E.$$
 Let $M$ and $N$ be the subgroup which consists elements of the form $\bm_n(g)$ and $\bn_n(b)$ respectively. Let $U_M$ be the standard maximal unipotent subgroup of $M$. Let $U=U_M\ltimes N$, which is a maximal unipotent subgroup of $\RU_{E/F}(n,n)$. Let $B$ be the standard upper triangular Borel subgroup and $B=TU$ is the Levi decomposition.
 
  Let $\psi_E$ (resp. $\psi$) be a nontrivial additive character of $E$ (resp. $F$).  We consider the generic character $\psi_U$ on $U$ defined by 
 $$\psi_U|_{U_M}((u_{ij}))=\psi_E(\sum_{i=1}^{n-1} u_{i,i+1}), (u_{ij})\in U_M,$$
 and 
 $$\psi_U|_{N}(u_{ij})=\psi(u_{n,n+1}), (u_{ij})\in N.$$
 
Let $w_1$ be the same as in the $\Sp_{2n}$ case and define $j(g)=w_1 gw_1^{-1}$ for $g\in \RU_{E/F}(n,n)$. 

\subsection{Weil representations and induced representations on $\RU_{E/F}(1,1)$}
The group $\RU_{E/F}(1,1)$ can be viewed as a subgroup of $\Sp_4$. Let $\mu$ be a character of $E^\times$ such that $\mu|_{F^\times}$ is the class field theory character on $F^\times$ defined by $E/F$. Then it is know that $\mu$ defines a splitting $s_\mu: \RU_{E/F}(1,1)\incl \widetilde \Sp_4$ of the double cover map $\widetilde \Sp_4\ra \Sp_4$ over $\RU_{E/F}(1,1)$. Thus for a nontrivial additive character $\psi$ of $F$, we have a Weil representation $\omega_{\mu,\psi}$ of $\RU_{E/F}(1,1)$ on the space $\CS(E)$. For the splitting $s_\mu$ and the Weil representation, see \cite{HKS} for example. 

Given a quasi-character $\eta$ of $E^\times$ and $s\in \BC$, we can consider the induced representation $I(s,\eta)=\Ind_{B^1}^{\RU_{E/F}(1,1)}(\eta_{s-1/2})$ of $\RU_{E/F}(1,1)$, where $B^1$ is the upper triangular Borel subgroup of $\RU_{E/F}(1,1)$. By \cite{Ba2}, we can parametrize the space $I(s,\eta)$ using the space $\CS(F)$, like the $\GL_2$ case. 
\subsection{Local zeta integrals and $\gamma$-factors} For simplicity, we denote $G_n=\RU_{E/F}(n,n)$. 
We consider the embedding $\iota: G_1\incl G_n$
$$ g\mapsto \begin{pmatrix} I_{n-1}&& \\ & g& \\ && I_{n-1}\end{pmatrix}.$$
We will not distinguish an element $g\in G_1$ with its image $\iota(g)\in G_n$. 

Let $\pi$ be a $\psi_U$-generic representation of $G_n=\RU_{E/F}(n,n)$, and $\eta$ be a quasi-character of $E^\times$. For $W\in \CW(\pi,\psi_U), \phi\in \CS(E), f\in I(s,\eta)$, we consider the local zeta integral
$$\Psi(W, \phi, f_s)=\int_{U^1\setminus G_1} \int_{E^{(n-2)}}\int_E W(j(r(y,x)g) ) \omega_{\mu,\psi^{-1}}(g)\phi(x)f_s(g)dx dy dg.$$
There is a standard intertwining operator $M_s: I(s,\eta)\ra I(1-s, \bar \eta^{-1})$. 
\begin{prop}\label{prop61}
The local zeta integral $\Psi(W,\phi,f_s)$ is absolutely convergent for $\Re(s)>>0$ and defines a rational function of $q_E^{-s}$. Moreover, there exists a rational function $\gamma(s,\pi,\eta,\psi)$ such that
$$\Psi(W,\phi, M_s(f_s))=\gamma(s,\pi,\eta,\mu,\psi)\Psi(W,\phi,f_s),$$
for all $W\in \CW(\pi,\psi_U), \phi\in \CS(E)$ and all $f_s\in I(s,\eta)$.
\end{prop}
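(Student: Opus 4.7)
The plan is to mimic the strategy used for $\Sp_{2n}$ in Proposition 1.4, adapting it to the unitary setting where the Weil representation is now $\omega_{\mu,\psi^{-1}}$ on $\CS(E)$.

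First, for the convergence and rationality claim, I would argue as follows. The inner $y$-integration is over $\Mat_{(n-2)\times 1}(E)$, but Lemma 2.6 (or rather, its $\RU(n,n)$-analogue, which is proved by the same Chevalley-type calculation since the root structure is the same as for $\Sp_{2n}$) shows that for a Howe vector the $y$-integral is supported on a compact set depending only on $m$; more generally a standard smoothness argument reduces the $y$-integration to a compact set via conjugation by elements of the torus and using the fact that $W$ is smooth. Hence the inner double integral over $y$ and $x$ is absolutely convergent. For the remaining integral over $U^1\backslash G_1$, I would split it via the Iwasawa decomposition $G_1 = B^1 K^1$ (with $K^1$ a maximal compact in $G_1$) and reduce to an integral over the torus $A \subset G_1$. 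On this torus one applies the standard gauge estimate for $W(\bt(a))$ coming from the asymptotics of $\psi_U$-Whittaker functions on $G_n$ (e.g., bounded by a finite sum of finite functions times characters); combined with the asymptotic behavior of $\omega_{\mu,\psi^{-1}}(g)\phi$ and $f_s(g)$, this yields absolute convergence in a right half plane. Rationality in $q_E^{-s}$ is then obtained by the usual Bernstein–Zelevinsky type argument: the integral is $K^1$-finite on both the Whittaker and inducing data, so it lies in a finitely generated module over $\BC[q_E^s, q_E^{-s}]$, forcing rationality. I would omit the details since this is standard.

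Second, for the existence of the $\gamma$-factor, the key input is uniqueness of Fourier–Jacobi models for $\RU_{E/F}(n,n)$, which is by now well established (Gan–Gross–Prasad, Sun, and others cited as \cite{GaGP, Su} in the $\Sp_{2n}$ case). The idea is to view the map $f_s \mapsto \Psi(W,\phi,f_s)$, for fixed $W \in \CW(\pi,\psi_U)$ and $\phi \in \CS(E)$, as an element of a Hom-space that is realized as a Fourier–Jacobi functional on $\pi \otimes (\omega_{\mu,\psi^{-1}} \otimes I(s,\eta))$. By the uniqueness theorem, this Hom-space is at most one-dimensional for generic $s$. Since both $\Psi(W,\phi,f_s)$ and $\Psi(W,\phi,M_s(f_s))$ provide elements of the analogous Hom-space (with $I(s,\eta)$ replaced by $I(1-s,\bar\eta^{-1})$ on the right), and the two induced representations are linked by the standard intertwining operator $M_s$, one-dimensionality forces proportionality by a meromorphic function of $s$, which we define to be $\gamma(s,\pi,\eta,\mu,\psi)$.

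The main obstacle is really just verifying that the zeta integral indeed gives a nonzero element of the relevant Fourier–Jacobi Hom-space at a generic point (so that the uniqueness argument produces a \emph{nonzero} proportionality constant, and hence a bona fide meromorphic function), and checking that the proportionality is meromorphic and not merely pointwise. The first is handled by constructing a specific datum (a Howe vector $W_{v_m}$ together with $\phi_m$ and the section $f_s^i$ from Lemma 4.3, suitably transported to the unitary setting) for which $\Psi(W,\phi,f_s)$ is a nonzero polynomial in $q_E^{\pm s}$; this will be the analogue of the calculation carried out in Section 4 for $\Sp_{2n}$. The second follows by the standard trick of noting that both sides are rational in $q_E^{-s}$ by part (1) and the known rationality of $M_s$, so their ratio is rational. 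With these in place, the conclusion of Proposition \ref{prop61} follows along exactly the same lines as Proposition \ref{prop14}, and for more details I would refer the reader to \cite{Ka}.
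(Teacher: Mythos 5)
Your proposal follows essentially the same route as the paper: the paper's proof of this proposition likewise invokes a standard gauge estimate for the convergence and rationality, and derives the existence of the $\gamma$-factor from the uniqueness of Fourier--Jacobi models (citing \cite{GaGP, Su}), omitting further details. Your additional remarks on non-vanishing via Howe vectors and on rationality of the proportionality factor are consistent with how the paper treats the analogous Proposition \ref{prop14} for $\Sp_{2n}$.
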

\begin{proof}
The convergence of the local zeta integral is standard, which comes from a standard gauge estimate of $W$. The existence of the $\gamma$ factors comes from the the uniqueness of the Fourier-Jacobi models in the unitary case, see \cite{GaGP, Su}. We omit the details.
\end{proof}

\noindent \textbf{Remark:} The local integrals and the $\gamma$-factors in the unitary group case are analogues in the $\Sp_{2n}$ case. But to the author's knowledge, the local theory in the unitary group case is not studied in the literature.

\subsection{Howe vectors}
We can define the Howe vectors similarly. We provide a little bit details. Let $K_m=(1+\Mat_{2n\times 2n}((\CP\CO_E)^m))$, where $\CP$ is the maximal ideal of $F$ and $\CO_E$ is the ring of integers of $E$. Note that if $E/F$ is unramified, then $\CP\CO_E=\CP_E$, the maximal ideal of $E$. If $E/F$ is ramified, then $\CP\CO_E=\CP_E^2$. Let $d_m=\diag(\varpi^{-(2n-1)m}, \varpi^{-(2n-3)m}, \dots, \varpi^{-m}, \varpi^m, \dots, \varpi^{(2n-3)m},\varpi^{(2n-1)m}),$ where $\varpi$ is a uniformizer in $F$, and define 
$$H_m=d_m K_m d_m^{-1}.$$

Assume $\psi$ and $\psi_E$ are unramified additive characters of $F$ and $E$ respectively. We can define a character $\psi_m$ of $H_m$ similar to the $\Sp_{2n}$ case. Let $(\pi,V_\pi)$ be a $\psi_U$-generic irreducible smooth representation of $G_n$. For $v\in V_\pi$ with $W_v(1)=1$, we define
$$v_m=\frac{1}{\vol(U_m)}\int_{U_m} \psi_U(u)^{-1}\pi(u)v du.$$

Let $C$ be an integer such that $v$ is fixed by $\pi(K_C)$. Then the counterpart of Lemma \ref{lemma22} also holds in our case. 

The counterpart of Lemma \ref{lemma23} becomes
\begin{lem}\label{lemma62}
Let $m\ge C$ and $t\in T$. If $W_{v_m}(t)\ne 0$, then $\alpha_i(t)\in 1+(\CP\CO_E)^m$ and $\beta(t)=1+\CP^m$.
\end{lem}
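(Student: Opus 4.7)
The plan is to mirror the proof of Lemma \ref{lemma23} essentially verbatim, handling the two types of simple roots $\alpha_i$ and $\beta$ separately. The only real bookkeeping issue is that the root spaces for $\alpha_i$ ($1 \le i \le n-1$) are one-dimensional over $E$ while the root space for $\beta$ is one-dimensional over $F$, and the generic character $\psi_U$ uses $\psi_E$ on $U_M$ but uses $\psi$ on the middle coordinate of $N$; this forces the two conclusions to live in different ideals.

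First I would record the counterparts of Lemma \ref{lemma21} and of Eq.\,(\ref{eq21}) for $G_n$: a direct conjugation by $d_m$ shows $\bx_{\alpha_i}(r)\in H_m$ iff $r\in (\CP\CO_E)^{-(2\rht(\alpha_i)-1)m}$ and $\bx_\beta(r)\in H_m$ iff $r\in \CP^{-(2\rht(\beta)-1)m}$, while the character $\psi_m$ restricted to these root subgroups equals $\psi_E$ and $\psi$ respectively. The paper already asserts that the analog of Lemma \ref{lemma22}(2) holds, so for $m\ge C$ we have $W_{v_m}(ugh)=\psi_U(u)\psi_m(h)W_{v_m}(g)$ for $u\in U$, $h\in H_m$, $g\in G_n$.

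Next, for a simple root $\gamma\in\{\alpha_1,\dots,\alpha_{n-1},\beta\}$, I would pick $r$ in the appropriate lattice so that $\bx_\gamma(r)\in U_m\subset H_m$, and use the conjugation relation $t\bx_\gamma(r)=\bx_\gamma(\gamma(t)r)t$ together with the analog of Eq.\,(\ref{eq21}) to obtain
\begin{equation*}
\psi_m(\bx_\gamma(r))\,W_{v_m}(t)=\psi_U(\bx_\gamma(\gamma(t)r))\,W_{v_m}(t).
\end{equation*}
Assuming $W_{v_m}(t)\ne 0$, I can cancel and get $\psi_*(r)=\psi_*(\gamma(t)r)$ for all admissible $r$, where $\psi_*=\psi_E$ when $\gamma=\alpha_i$ and $\psi_*=\psi$ when $\gamma=\beta$. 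For $\gamma=\alpha_i$, since $\psi_E$ is unramified (conductor $\CO_E$) and $r$ ranges over $(\CP\CO_E)^{-m}$, this forces $\alpha_i(t)-1\in(\CP\CO_E)^m$; for $\gamma=\beta$, since $\psi$ has conductor $\CO$ and $r$ ranges over $\CP^{-m}$, this forces $\beta(t)-1\in\CP^m$.

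There is no substantive obstacle here: the proof is a translation of Lemma \ref{lemma23} with one additive character swapped for another at the right spots. The one sanity check worth making is that in the ramified case $\CP\CO_E=\CP_E^2$, so the statement $\alpha_i(t)\in 1+(\CP\CO_E)^m$ is genuinely weaker than $\alpha_i(t)\in 1+\CP_E^m$ and matches exactly what the argument produces; this is why the statement of the lemma is phrased in terms of $(\CP\CO_E)^m$ rather than $\CP_E^m$.
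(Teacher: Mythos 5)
Your proof is correct and is exactly the argument the paper intends: the paper states Lemma \ref{lemma62} without proof as the ``counterpart'' of Lemma \ref{lemma23}, and your adaptation — running the conjugation relation $t\bx_\gamma(r)=\bx_\gamma(\gamma(t)r)t$ against Eq.\,(\ref{eq21}), with $r$ ranging over $(\CP\CO_E)^{-m}$ and the unramified $\psi_E$ for the $\alpha_i$, versus $r$ ranging over $\CP^{-m}$ and $\psi$ for $\beta$ — is the verbatim translation the paper relies on. Your bookkeeping of why the two conclusions land in $(\CP\CO_E)^m$ and $\CP^m$ respectively is also the right sanity check.
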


Denote $E^1$ the norm 1 elements in $E^\times$. The counterpart of Lemma \ref{lemma24} has the following form
\begin{lem}\label{lemma63}
Suppose that $E/F$ is unramified, or $E/F$ is ramified but the residue characteristic of $F$ is not $2$. For $a\in E^\times$, if $a\bar a\in 1+\CP^m$, then $a\in E^1(1+(\CP\CO_E)^m)$.
\end{lem}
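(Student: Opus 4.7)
The plan is to reduce the lemma to surjectivity of the norm map
\[
N_{E/F}\colon 1 + (\CP\CO_E)^m \longrightarrow 1 + \CP^m.
\]
Granting this, given $a\in E^\times$ with $a\bar a \in 1+\CP^m$, one chooses $c\in 1+(\CP\CO_E)^m$ with $N_{E/F}(c) = a\bar a$ and sets $b := ac^{-1}$; then $N_{E/F}(b)=1$, i.e.\ $b\in E^1$, and $a = bc\in E^1\cdot(1+(\CP\CO_E)^m)$, as desired.

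In the ramified case one has $(\CP\CO_E)^m = \CP_E^{2m}$ and $\varpi^m \in \CP_E^{2m}$, so $1+\CP^m\subseteq 1+(\CP\CO_E)^m$. For any $c\in 1+\CP^m\subset F^\times$ we have $N_{E/F}(c) = c\bar c = c^2$, so the required surjectivity reduces to surjectivity of the squaring map $1+\CP^m\to 1+\CP^m$. This is precisely Lemma \ref{lemma24}, which applies since the residue characteristic of $F$ is assumed to be odd.

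For the unramified case, $(\CP\CO_E)^m=\CP_E^m$, and I would argue by successive approximation. Given $u\in 1+\CP^m$, start with $c_0=1$, and suppose inductively that $c_k\in 1+\CP_E^m$ satisfies $\gamma_k := u - N_{E/F}(c_k)\in \CP^{m+k}$. Set $c_{k+1} = c_k(1+x_{k+1})$ with $x_{k+1}\in \CP_E^{m+k}$ to be chosen. Since $N_{E/F}(1+x_{k+1}) = 1 + \Tr_{E/F}(x_{k+1}) + N_{E/F}(x_{k+1})$ and $N_{E/F}(x_{k+1})\in \CP^{2(m+k)}\subseteq \CP^{m+k+1}$, a direct expansion shows that $u - N_{E/F}(c_{k+1})$ lies in $\CP^{m+k+1}$ provided $\Tr_{E/F}(x_{k+1})\equiv N_{E/F}(c_k)^{-1}\gamma_k \pmod{\CP^{m+k+1}}$. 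The critical input is that the trace map $\Tr_{E/F}\colon \CP_E^{m+k}/\CP_E^{m+k+1}\to \CP^{m+k}/\CP^{m+k+1}$ is surjective; after uniformizing identifications this becomes the residue trace $k_E\to k_F$, which is surjective because the extension of finite residue fields is automatically separable. The sequence $c_k$ then converges in the complete group $1+\CP_E^m$ to a limit $c$ with $N_{E/F}(c)=u$.

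The main technical point is the unramified case, where the successive approximation must invoke surjectivity of the trace on the filtration pieces; the ramified case is an immediate corollary of the Hensel-type Lemma \ref{lemma24} already established in the paper.
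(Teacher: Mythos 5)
Your proof is correct, and the reduction you set up --- producing $c\in 1+(\CP\CO_E)^m$ with $N_{E/F}(c)=a\bar a$ and writing $a=(ac^{-1})\cdot c$ with $ac^{-1}\in E^1$ --- is exactly the mechanism left implicit in the paper's one-line proof. In the unramified case the paper simply cites Proposition 3 of Chapter V, \S 2 of \cite{Se} for surjectivity of the norm map on the unit filtration; your successive-approximation argument via surjectivity of the residue-field trace is the standard proof of that cited fact, so there is no real divergence there (and, correctly, it imposes no condition on the residue characteristic). The genuine difference is in the ramified case: the paper defers to Lemma 3.3 of \cite{Zh1}, remarking that this case ``needs a little bit more work,'' whereas you observe that $a\bar a$ already lies in $1+\CP^m\subset F$, that Lemma \ref{lemma24} provides a square root $c\in 1+\CP^m\subset 1+(\CP\CO_E)^m$, and that $N_{E/F}(c)=c\bar c=c^2$ for $c\in F^\times$. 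This keeps the argument entirely internal to the paper and sidesteps any filtration analysis of the norm map for ramified quadratic extensions; the only cost is that it genuinely uses the odd-residue-characteristic hypothesis, which is precisely what the lemma assumes in the ramified case.
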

\begin{proof}
If $E/F$ is unramified, the result follows from the fact that the norm map $1+\CP_E^m\ra 1+\CP_E^m, a\mapsto a\bar a$ is surjective, see Proposition 3, Chapter V, $\S$2 of \cite{Se}. If $E/F$ is ramified, it needs a little bit more work. See Lemma 3.3 of \cite{Zh1} for more details.
\end{proof}

Now we fix two $\psi_U$-generic irreducible smooth representations $(\pi,V_\pi)$ and $(\pi',V_{\pi'})$ with the same central character. We fix $v\in V_{\pi},v'\in V_{\pi'}$ such that $W_{v}(1)=W_{v'}(1)=1$, and an integer $C$ such that $v$ is fixed by $\pi(K_C)$ and $v'$ is fixed by $\pi'(K_C)$. 

The counterpart of Lemma \ref{lemma312} also holds:
\begin{lem}\label{lemma54}
\begin{enumerate}
\item If $E/F$ is unramified, or $E/F$ is ramified but the residue characteristic of $F$ is not $2$, then 
$$W_{v_m}(g)=W_{v_m'}(g),$$
for all $g\in B,m\ge C$.
\item We have $W_{v_m}(tw)=0=W_{v_m'}(tw)$ for all $w<w_0, t\in T$ and $m\ge C$.
\end{enumerate}
\end{lem}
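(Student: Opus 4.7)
The plan is to adapt the proof of Lemma~\ref{lemma312} from the $\Sp_{2n}$ case, substituting Lemma~\ref{lemma62} and Lemma~\ref{lemma63} for the symplectic-group ingredients (Lemma~\ref{lemma23} and Lemma~\ref{lemma24}). The two parts are handled independently, exactly as before.

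For part (1), I would first establish a unitary analogue of Corollary~\ref{cor25}. Given $t=\diag(a_1,\dots,a_n,\bar a_n^{-1},\dots,\bar a_1^{-1})\in T$ with $W_{v_m}(t)\ne 0$, Lemma~\ref{lemma62} provides $a_i/a_{i+1}\in 1+(\CP\CO_E)^m$ for $1\le i\le n-1$ and $\beta(t)=a_n\bar a_n\in 1+\CP^m$. Lemma~\ref{lemma63} upgrades the latter to a factorization $a_n=e\,a_n'$ with $e\in E^1$ and $a_n'\in 1+(\CP\CO_E)^m$. Running the descending induction used in Corollary~\ref{cor25}, one then writes $a_i=e\,a_i'$ for every $i$, with $a_i'\in 1+(\CP\CO_E)^m$. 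This presents $t$ as a product $z\,t'$ where $z=\diag(e,\dots,e,e,\dots,e)$ lies in the center of $G_n$ (the condition $e\bar e=1$ is precisely what forces centrality in $\RU_{E/F}(n,n)$), and $t'\in T\cap H_m$ is a diagonal element on which $\psi_m$ is trivial. The Howe-vector invariance from the unitary version of Lemma~\ref{lemma22} (stated after Lemma~\ref{lemma62}) then yields $W_{v_m}(t)=\omega_\pi(z)$, and the equal-central-character hypothesis gives $W_{v_m}(t)=W_{v_m'}(t)$. The extension to an arbitrary $g=ut\in B$ follows at once from the Whittaker transformation law.

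For part (2), since $G_n$ and $\Sp_{2n}$ share the same Weyl group and Bruhat order, the combinatorial input at the end of the proof of Lemma~\ref{lemma312}(2) transfers verbatim: for every $w\le w_0$ with $w\ne 1$, some simple root $\gamma$ has the property that $w(\gamma)$ is a positive non-simple root (the reduction to $w_lw_0$ being the long element of a Levi of a maximal parabolic works unchanged since the corresponding Levi in $G_n$ is $\GL_1(E)\times \RU_{E/F}(n-1,n-1)$). For such a $\gamma$ and $t\in T$, I would choose $r$ in the appropriate fractional ideal of $E$ or $F$ (depending on whether $\gamma$ is one of the $\alpha_i$ or the root $\beta$) so that $\bx_\gamma(r)\in U_m$. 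Conjugating $\bx_\gamma(r)$ through $tw$ and applying the unitary analogue of \eqref{eq21} yields $\psi_m(\bx_\gamma(r))W_{v_m}(tw)=W_{v_m}(tw)$, since $\psi_U$ is trivial on the root subgroup of the non-simple root $w(\gamma)$. Because $\psi$ and $\psi_E$ are unramified, $r$ can be chosen so that $\psi_m(\bx_\gamma(r))\ne 1$, forcing $W_{v_m}(tw)=0$; the same computation with $v_m'$ gives the matching vanishing.

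I expect the main obstacle to be the ramified-extension case of part~(1). Lemma~\ref{lemma63} in that regime genuinely uses the hypothesis on the residue characteristic, and one must carefully track the fractional ideals of $E$ versus $F$ when extracting the norm-one factor $e$ and verifying that the residual diagonal element $t'$ really belongs to $H_m$ (as opposed to a larger congruence subgroup). Once this descent step is cleanly executed, the rest of the argument is a mechanical transcription of the symplectic proof.
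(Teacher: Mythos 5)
Your proposal is correct and follows exactly the route the paper takes: its own proof of Lemma~\ref{lemma54} simply notes $T\cap H_m=\diag(1+(\CP\CO_E)^m,\dots,1+(\CP\CO_E)^m)$ and defers to the $\Sp_{2n}$ argument with Lemma~\ref{lemma62} and Lemma~\ref{lemma63} replacing Lemma~\ref{lemma23} and Lemma~\ref{lemma24}, which is precisely the transcription you carry out (including the correct identification of the center as the norm-one scalars and the unchanged Weyl-group combinatorics for part (2)). No gaps; your flagged concern about the ramified case is already absorbed into the statement of Lemma~\ref{lemma63}.
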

\begin{proof}
Notice that $T_m=T\cap H_m= \diag(1+(\CP\CO_E)^m,\dots, 1+(\CP\CO_E)^m)$. A simple calculation as in the $\Sp_{2n}$ case shows (1) following Lemma \ref{lemma62} and Lemma \ref{lemma63}. 

The proof of (2) is the same as in the $\Sp_{2n}$ case.
\end{proof}

Note that Theorem \ref{thm311} and hence Corollary \ref{cor313} also holds in the unitary group case. The same calculation as in $\S$4 will show the stability of $\gamma$-factors in the unitary case. More precisely, we have
\begin{thm}
Suppose that $E/F$ is unramified, or $E/F$ is ramified but the residue characteristic of $F$ is not $2$. Let $\pi,\pi'$ be two $\psi_U$-generic irreducible smooth representations of $\RU_{E/F}(n,n)$. Then if $\eta$ is a highly ramified quasi-character of $F^\times$, we have
$$\gamma(s,\pi,\eta,\mu,\psi)=\gamma(s,\pi',\eta,\mu,\psi).$$
\end{thm}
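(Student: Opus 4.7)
The plan is to adapt the proof of Theorem 4.4 (the $\Sp_{2n}$ case) almost verbatim, using the structural results already in place for $G_n = \RU_{E/F}(n,n)$ from Section 6. All the key ingredients have analogues: the Howe vectors $v_m, v_m'$ with the invariance properties of Lemma 2.2, Lemma 6.4 (playing the role of Lemma 3.12), and the unitary analogues of Theorem 3.11 and Corollary 3.13, which hold because $\RU(n,n)$ shares its Weyl-group combinatorics with $\Sp_{2n}$, as noted in the remark after Theorem 3.11. What remains is to assemble these into the same two-sided functional-equation argument on $\RU(1,1)$-induced data.

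First I would construct the test data on the $G_1 = \RU_{E/F}(1,1)$ side. Take $\phi_m \in \CS(E)$ to be the characteristic function of an appropriate $\CO_E$-lattice in $E$, with the exponent depending on whether $E/F$ is ramified, and verify the unitary analogues of Lemma 4.1: $\omega_{\mu,\psi^{-1}}(\bn_1(b))\phi_m = \phi_m$ and $\omega_{\mu,\psi^{-1}}(\bar\bn_1(b))\phi_m = \phi_m$ for $b$ in suitable ideals of $F$ and $E$ respectively. Then for a quasi-character $\eta$ of $E^\times$, construct sections $f_s^i \in I(s,\eta)$ supported on $B^1 \bar U^1_i$ and prove the $G_1$-analogue of Lemma 4.3, that $M_s(f_s^i)$ equals $\vol(\bar U^1_i)$ on $w^1 X$ for an open compact $X \subset U^1$. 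The ramified-versus-unramified dichotomy affects only the numerical exponents, not the structure of the argument; this is the precise reason the ramified residue characteristic~$2$ case is excluded, since Lemma 6.3 fails there.

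Next, compute $\Psi(W_m, \phi_m, f_s^i)$ by restricting to the open dense cell $U^1 A \bar U^1 \subset U^1 \setminus G_1$. Using the $H_m$-equivariance of $v_m$ from the unitary analogue of Lemma 2.2(2), the invariance of $\phi_m$ established above, and the analogue of Corollary 2.5 (which follows from Lemmas 6.2 and 6.3 together with the equal central character assumption), all of the $x$, $y$, $b$ integrations collapse and one obtains the same explicit constant for both representations, so that
\[\Psi(W_{v_m}, \phi_m, f_s^i) = \Psi(W_{v_m'}, \phi_m, f_s^i).\]
On the dual side compute $\Psi(W_m, \phi_m, M_s(f_s^i))$ by restricting to $U^1 A w^1 U^1$ and using $j(w^1) = w_0$. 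The parameters $(y,x,b)$ coordinatize an element of $U_{w_0}^-$; by the unitary Corollary 3.13 the difference between the two Whittaker functions vanishes off $U_{w_0,m}^-$, and after applying Lemma 6.2(3) at level $k = 4^{l(w_0)^2}C$ to pass back to $\bt(a)w_0$, the difference of the two zeta integrals becomes an explicit nonzero constant times
\[\int_{E^\times}\bigl(W_{v_k}(\bt(a)w_0) - W_{v_k'}(\bt(a)w_0)\bigr)\,\eta^{-1}_{-s-n+1}(a)\, da.\]

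Combining with the functional equation of Proposition 6.1 yields
\[\gamma(s,\pi,\eta,\mu,\psi) - \gamma(s,\pi',\eta,\mu,\psi) = c \cdot \int_{E^\times}\bigl(W_{v_k}(\bt(a)w_0) - W_{v_k'}(\bt(a)w_0)\bigr)\,\eta^{-1}_{-s-n+1}(a)\, da\]
with $c \neq 0$ depending only on $m$, $i$, $k$, the volume factors and the relevant Weil constants. Since $v, v'$ (and hence $k$) are fixed before $\eta$ is chosen, the two Whittaker functions on the right are smooth of bounded conductor, and for $\eta$ of sufficiently large conductor $l = l(\pi,\pi')$ the Mellin integral vanishes, proving the theorem. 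The main obstacle I expect is the bookkeeping of lattice indices in the ramified case, where $\CP\CO_E = \CP_E^2$ forces one to track exponents in the correct uniformizer throughout the definitions of $H_m$, $\phi_m$, and $f_s^i$, and a secondary nuisance is reconciling $\eta$ as a character of $E^\times$ (per Proposition 6.1) with the statement, which may require extending a character of $F^\times$ via $\mathrm{Nm}_{E/F}$ before the final conductor argument.
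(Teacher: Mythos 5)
Your proposal follows exactly the route the paper takes: the paper's own proof of this theorem consists of observing that Theorem 3.11 and Corollary 3.13 carry over to $\RU_{E/F}(n,n)$ (since the Weyl group combinatorics are the same) and then invoking "the same calculation as in \S 4," which is precisely the adaptation you spell out, including the role of Lemmas 6.2--6.4 and the reason the ramified residue-characteristic-$2$ case is excluded. Your write-up is in fact more explicit than the paper's about the $\RU(1,1)$ test data and the $E^\times$-versus-$F^\times$ issue for $\eta$, but it is the same argument.
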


\end{document}